\documentclass[12pt, letter, reqno]{amsart}
\usepackage{amsmath,amssymb,comment}

%%%%%%%%%%%%%%%%
%%%%%%% Euan inserting code to make the titles of theorems be bold
%%%%%%%%%%%%%%%%
\makeatletter
\def\th@plain{%
  \thm@notefont{}% same as heading font
  \itshape % body font
}
\def\th@definition{%
  \thm@notefont{}% same as heading font
  \normalfont % body font
}
\makeatother
%%%%%%%%%%%%%%%%%

\usepackage{color}

\usepackage{enumerate}

\title[Helmholtz equation]{Sharp high-frequency estimates for the Helmholtz equation and applications to boundary integral equations}
\author{Dean Baskin}
\author{Euan Spence}
\author{Jared Wunsch}

\newtheorem{theorem}{Theorem}[section]
%%%%% Euan replacing existing definition of Lemma with my definition for now, as had problems compiling - will look into later
%\newtheorem{lemma}{Lemma}[section]
\newtheorem{lemma}[theorem]{Lemma}

%\input{euan_macros}

%%%% BEGIN FILE euan_macros.tex

\newtheorem{assumption}[theorem]{Assumption}
\newtheorem{definition}[theorem]{Definition}
\newtheorem{remark}[theorem]{Remark}
\newtheorem{corollary}[theorem]{Corollary}

\newcommand{\cF}{{\mathcal F}}

\newcommand{\cO}{{\mathcal O}}

\newcommand{\bx}{x}%\mathbf{x}}

\newcommand{\bn}{n}%\mathbf{n}}

\newcommand{\by}{y}%\mathbf{y}}

\DeclareMathOperator{\supp}{supp} % JW: better spacing this way

\newcommand{\tOmega}{\widetilde{\Omega}}

\newcommand{\re}{{\rm e}}
\newcommand{\ri}{{\rm i}}
\newcommand{\rd}{{\rm d}}

\newcommand{\noi}{\noindent}

%%%%%%%%%%%%%%%%%%%%%%%%%%%%%%%%%%%%%%%%%%%%%%%%
%%%%%%%%%%%%%%%%% Euan's latex short-cuts%%%%%%%

%%%%%%%%%%%%%begin/end environment short-cuts%%%%%%%%%%%%%5
\newcommand{\beq}{\begin{equation}}
\newcommand{\eeq}{\end{equation}}
\newcommand{\beqs}{\begin{equation*}}
\newcommand{\eeqs}{\end{equation*}}
\newcommand{\bit}{\begin{itemize}}
\newcommand{\eit}{\end{itemize}}
\newcommand{\ben}{\begin{enumerate}}
\newcommand{\een}{\end{enumerate}}
\newcommand{\bal}{\begin{align}}
\newcommand{\eal}{\end{align}}
\newcommand{\bals}{\begin{align*}}
\newcommand{\eals}{\end{align*}}
\newcommand{\bse}{\begin{subequations}}
\newcommand{\ese}{\end{subequations}}
\newcommand{\bpr}{\begin{proposition}}
\newcommand{\epr}{\end{proposition}}
\newcommand{\bre}{\begin{remark}}
\newcommand{\ere}{\end{remark}}
\newcommand{\bpf}{\begin{proof}}
\newcommand{\epf}{\end{proof}}
\newcommand{\ble}{\begin{lemma}}
\newcommand{\ele}{\end{lemma}}
\newcommand{\bco}{\begin{corollary}}
\newcommand{\eco}{\end{corollary}}
\newcommand{\bex}{\begin{example}}
\newcommand{\eex}{\end{example}}
\newcommand{\bth}{\begin{theorem}}
\newcommand{\enth}{\end{theorem}}

%%%%%%%%%%%%%mathematical symbols%%%%%%%%%%%%%%%%%%%%%
\newcommand{\Rea}{\mathbb{R}}

\newcommand{\cond}{\mathop{{\rm cond}}}

\newcommand{\comp}{\mathop{{\rm comp}}}

\newcommand{\Oi}{{\Omega_-}}

\newcommand{\Oe}{{\Omega_+}}
\newcommand{\OR}{{\Omega_R}}

\newcommand{\eps}{\varepsilon}

\newcommand{\pdiff}[2]{\frac{\partial #1}{\partial #2}}
\newcommand{\dudn}{\pdiff{u}{n}}

\newcommand{\dnu}{\partial_n u}

\newcommand{\dnpu}{\partial_n^+ u}
\newcommand{\dnmu}{\partial_n^- u}

\newcommand{\dnpw}{\partial_n^+ w}

\newcommand{\dudnw}{\partial u/\partial n}

\newcommand{\nus}{|u|^2}
\newcommand{\ngus}{|\nabla u|^2}

\newcommand{\gu}{\nabla u}

\newcommand{\gv}{\nabla v}

\newcommand{\nT}{\nabla_\Gamma}

%\pdiff{S_k}{T}}

\newcommand{\half}{\frac{1}{2}}

\newcommand{\LtG}{L^2(\Gamma)}
\newcommand{\HoG}{H^1(\Gamma)}
\newcommand{\HhG}{H^{1/2}(\Gamma)}
\newcommand{\HmhG}{H^{-1/2}(\Gamma)}

\newcommand{\la}{\lambda}

\newcommand{\LtO}{L^2(\Omega)}

\newcommand{\Holoc}{H^1_{\text{loc}}}
\newcommand{\Holoce}{H^1_{\text{\emph{loc}}}}

\newcommand{\Lt}[1]{L^2( #1 )}
\newcommand{\Ho}[1]{H^1( #1 )}
\newcommand{\Hh}[1]{H^{1/2}( #1)}
\newcommand{\Hmh}[1]{H^{-1/2}( #1)}

\newcommand{\tendi}{\rightarrow \infty}

%\newcommand{\ReN}{\text{Re}}
%\newcommand{\ReN}{\mathscr{R}e}
%\newcommand{\ImN}{\mathscr{Im}}

%\newcommand{\ImN}{\mbox{Im}}

%%%%%%%%%% new for this paper

\newcommand{\opA}{A'_{k,\eta}}
\newcommand{\opAt}{\widetilde{A}'_{k,\eta}}

\newcommand{\opBM}{B_{k,\eta}}
\newcommand{\opBMt}{\widetilde{B}_{k,\eta}}
\newcommand{\opAinv}{(A'_{k,\eta})^{-1}}

\newcommand{\opBMinv}{B^{-1}_{k,\eta}}
\newcommand{\opBMtinv}{\widetilde{B}^{-1}_{k,\eta}}
\newcommand{\normA}{\|\opA\|}

\newcommand{\normAinv}{\|\opAinv\|}

%\Gamma_R}

%C_{\text{comp}}^\infty(\Omega)}
%C_{\text{comp}}^\infty(\overline{\Omega})}

\newcommand*{\N}[1]{\left\|#1\right\|}

\newcommand{\HokOe}{{H^1_k(\Oe)}}
\newcommand{\HokG}{{H^1_k(\Gamma)}}

\newcommand{\DtN}{P^+_{DtN}}
\newcommand{\NtD}{P^+_{NtD}}
\newcommand{\ItD}{P^{-,\eta}_{ItD}}
\newcommand{\ItN}{P^{-,\eta}_{ItN}}
\newcommand{\ItDR}{P^{-,\eta, R}_{ItD}}

\newcommand{\LtGt}{L^2(\Gamma) \rightarrow \LtG}

%%%%%%% text short cuts %%%%%%

\newcommand{\ton}{\text{ on }}
\newcommand{\tin}{\text{ in }}
\newcommand{\tfa}{\text{ for all }}

\newcommand{\tand}{\text{ and }}

%%%%%%%%%%%%%%%%%%%% ANDREA'S %%%%%%%%%%%%%%5
 \usepackage{hyperref}
% \definecolor{myblue}{rgb}{0,0,0.6}
% \hypersetup{colorlinks=true,
% linkcolor=myblue,citecolor=myblue,filecolor=myblue,urlcolor=myblue}
% \providecommand{\red}{}%\textcolor{red}}
% \providecommand{\blue}{\textcolor{red}}
% \allowdisplaybreaks[4]
%%%%%%%%%%%%%%%%%%%%%%%%%%%%%%%%%%%%%%%%%

%%%% END FILE euan_macros.tex

% \usepackage[a4paper]{geometry}
\usepackage[margin=3cm]{geometry}

\renewcommand{\Im}{\operatorname{Im}}
\renewcommand{\Re}{\operatorname{Re}}
% JW prefers these versions, but is willing to bow to strong disagreement.

\newcommand{\notalpha}{a}
\newcommand{\notbeta}{b}

\newcommand{\Norm}[2][]{\left\| #2\right\|_{#1}}

\newcommand{\lap}{\Delta}
\newcommand{\pd}[1][]{\partial_{#1}}
\newcommand{\reals}{\mathbb{R}}
\newcommand{\dom}{\mathcal{D}}
\newcommand{\bpm}{\begin{pmatrix}}
\newcommand{\epm}{\end{pmatrix}}

\newcommand{\loc}{{\text{loc}}}
\newcommand{\pa}{\partial}
\newcommand{\abs}[1]{{\left\lvert{#1}\right\rvert}}
\newcommand{\norm}[1]{{\left\lVert{#1}\right\rVert}}
\newcommand{\ang}[1]{{\left\langle{#1}\right\rangle}}
\newcommand{\smallabs}[1]{{\lvert{#1}\rvert}}
\newcommand{\smallnorm}[1]{{\lVert{#1}\rVert}}
\newcommand{\smallang}[1]{{\langle{#1}\rangle}}
\newcommand{\Lap}{\Delta}
\newcommand{\RR}{\mathbb{R}}

\newcommand{\NN}{\mathbb{N}}

\newcommand{\CC}{\mathbb{C}}

\newcommand{\hyp}{\mathcal{H}}
\newcommand{\gla}{\mathcal{G}}

\newcommand{\Rimp}{R_{I,\eta}}
\newcommand{\tRimp}{\widetilde{R}_{I,\eta}}

%%% Euan created bsw3 from bsw2 on 31st January 2015
%%% Euan created bsw2 from bsw1 on 19th October 2014
%%% JW doing some editing August 7 2014.
%%% Euan adding 8th March 2014 %%%%%
%%% Jared adding a mess of new stuff on 3/11.  Apologies for notation.
%%% Dean added cut-off argument to de-Kreissification on 3/18.

\usepackage{color}
%%%

\begin{document}

\maketitle

% Euan has cut old semiclassical stuff from tex file; still in bsw1.tex

\begin{abstract}
  We consider three problems for the Helmholtz equation in interior and
  exterior domains in $\mathbb{R}^d,$ ($d=2,3$): the exterior
  Dirichlet-to-Neumann and Neumann-to-Dirichlet problems for outgoing
  solutions, and the interior impedance problem.  We derive sharp estimates for
  solutions to these problems that, in combination, give bounds on the
  inverses of the combined-field boundary integral operators for exterior Helmholtz problems.
%  Our estimates generalize to the case of non-trapping obstacles sharp estimates that were previously
 % only known in the star-shaped case.
\end{abstract}

\section{Introduction}\label{sec:introduction}

Proving bounds on solution of the Helmholtz equation
\beq\label{eq:Helm_intro}
\Delta u + k^2 u = -f
\eeq
(where $f$ is a given function and $k\in \Rea\setminus\{0\}$ is the wavenumber) has a long
history. Nevertheless, the following problems have remained
open.
\begin{enumerate}[(i)]
\item Proving sharp bounds on the \emph{Dirichlet-to-Neu\-mann (DtN)} or
  \emph{Neu\-mann-to-Dirichlet (NtD)} maps for outgoing
  solutions of the homogeneous Helmholtz equation (i.e., equation~\eqref{eq:Helm_intro} with $f=0$) in exterior nontrapping domains.
\item Proving sharp bounds on the solution of the \emph{interior impedance problem (IIP)} for general domains, where this boundary value problem (BVP) consists of \eqref{eq:Helm_intro} posed in a bounded domain with the boundary condition
\beq\label{eq:imp_bc}
\dudn - \ri \eta u = g
\eeq
where $g$ is a given function and $\eta
\in \Rea\setminus\{0\}$.
\end{enumerate}
This paper fills these gaps in the literature.

The motivation for considering the exterior DtN and NtD maps for the Helmholtz equation is fairly clear, since these are natural objects to study in relation to scattering problems.
The motivation for studying the IIP is two-fold:
\bit
\item[(i)] It has become a standard model problem used when designing
  numerical methods for solving the Helmholtz equation (see
  Section~\ref{sec:iip-mot} below for further explanation), and to
  prove error
  estimates %for such methods applied to this model problem
  one needs bounds on the solution of the BVP.
\item[(ii)] The integral equations used to solve the exterior Dirichlet, Neumann, and impedance problems can also be used to solve the IIP; therefore, to prove bounds on the inverses of these integral operators, one needs to have bounds on the solution of the IIP  -- we discuss this more in \S\ref{sec:int} below.
\eit

This paper may be regarded as a sequel to \cite{ChMo:08} and
\cite{Sp:14} as it variously sharpens and generalizes estimates
obtained in those works.  We will refer to these papers for many of the
basic results.  Although the results proved here hold for any
dimension $d \geq 2$, we state them only in dimensions $2$ and $3$, 
firstly since these are the most interesting for applications, and secondly since this avoids
re-proving background material only stated in the literature in these low dimensions.

\subsection{Statement of the main results}\label{sec:1-1}

Let $\Oi\subset \Rea^d, \,d=2,3,$ be a bounded, Lipschitz open set with boundary $\Gamma:=\partial \Oi$,
such that the open complement $\Oe:= \Rea^d \setminus \overline{\Oi}$ is connected.
Let $\gamma_{\pm}$ denote the trace operators from $\Omega_{\pm}$ to $\Gamma$, 
let $\partial_n^{\pm}$ denote the normal derivative trace operators, and let $\nT$ denote the surface gradient operator on $\Gamma$. 
Let $B_R:=\{\bx :|\bx|<R\}$.

%(For precise definitions of these operators, see \S\ref{sec:notation}. Note that we will also call $\gamma_\pm u$ the Dirichlet traces of $u$ and $\partial_n^\pm u$ the Neumann traces.) 
%Let $B_R:=\{\bx :|\bx|<R\}$.

%This paper contains four theorems (Theorems \ref{thm:1}, \ref{thm:2}, \ref{thm:3}, and \ref{thm:4}). The first two concern the DtN and NtD maps for the Helmholtz equation in $\Oe$ under geometric restrictions explained in the next three definitions. 

\begin{definition}[Nontrapping]\label{def:nt1}
We say that $\Oe\subset \Rea^d, \,d=2, 3$ is 
\emph{nontrapping} if $\Gamma$ is smooth ($C^\infty$) and,
given $R> \sup_{\bx\in\Oi}|\bx|$, there exists a $T(R)<\infty$ such that 
all the billiard trajectories (in the sense of Melrose--Sj{\"o}strand~\cite{MeSj:82})
that start in $\Oe\cap B_R$ at time zero leave $\Oe\cap B_R$ by time $T(R)$.
\end{definition}

\begin{definition}[Nontrapping polygon]\label{def:nt2}
If $\Oi\subset \Rea^2$ is a polygon we say that it is a \emph{nontrapping polygon} if (i) no three vertices are collinear, and (ii), 
given $R> \sup_{\bx\in\Oi}|\bx|$, there exists a $T(R)<\infty$ such that 
all the billiard trajectories that start in $\Oe\cap B_R$ at time zero and miss the vertices leave $\Oe\cap B_R$ by time $T(R)$.
(For a more precise statement of (ii) see \cite[\S5]{BaWu:13}.)
\end{definition}

\begin{definition}[Star-shaped]\label{def:star} Let $\Oi\subset\Rea^d, \,d=2, 3,$ be a bounded, Lipschitz open set.

\noi(i) we say that $\Oi$ is \emph{star-shaped} if 
$\bx \cdot\bn(\bx)\geq 0$ for every $\bx \in \Gamma$ for which $\bn(\bx)$ is defined (where $n(x)$ is the normal to $\bx\in\Gamma$).

\noi(ii) we say that $\Oi$ is \emph{star-shaped with respect to a ball} if there exists a constant $c>0$ such that $\bx \cdot\bn(\bx)\geq c$ for every $\bx \in \Gamma$ for which $\bn(\bx)$ is defined. 

\end{definition}

\begin{theorem}[Bounds on the exterior DtN map]
\label{thm:1}
Let 
$u\in\Holoce(\Oe)$ satisfy the Helmholtz equation 
\beq\label{eq:Helm}
\Delta u +k^2u=0 \quad\mbox{ in } \Oe
\eeq
for $k \in \reals\setminus\{0\}$ and the Sommerfeld radiation condition 
\beq\label{eq:src}
\pdiff{u}{r}
- \ri k u
= o\left(\frac{1}{r^{(d-1)/2}}\right) 
\eeq
as $r:=|\bx| \tendi$, uniformly in $\hat{\bx}:=\bx/r$.
If \emph{either} $\Oe$ is nontrapping (in the sense Definition \ref{def:nt1}) \emph{or} $\Oi$ is a nontrapping polygon (in the sense of Definition \ref{def:nt2}) \emph{or} $\Oi$ is Lipschitz and star-shaped (in the sense of Definition \ref{def:star}(i)), 
then, given $k_0>0$,
\beq\label{eq:DtN1}
\N{\dnpu}_{\Hmh{\Gamma}} \lesssim \abs{k}\N{\gamma_+ u }_{\Hh{\Gamma}},
\eeq
for all $\abs{k}\geq k_0$.
Furthermore, if $\gamma_+ u \in \HoG$ then $\dnpu\in \LtG$ and, given $k_0>0$,
\beq\label{eq:DtN2}
\N{\dnpu}_{\Lt{\Gamma}} \lesssim \N{\nT (\gamma_+ u)}_{\Lt{\Gamma}} + \abs{k} \N{\gamma_+ u }_{\Lt{\Gamma}}
\eeq
for all $\abs{k}\geq k_0$.
\end{theorem}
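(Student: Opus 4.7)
The plan is to reduce the exterior problem to one on the truncated domain $\OR := \Oe \cap B_R$ (with $R$ chosen so that $\overline{\Oi}\subset B_R$), and then handle the three geometric scenarios by two complementary techniques: Rellich--Morawetz multiplier identities for the star-shaped case, and the outgoing nontrapping resolvent estimate for the other two cases. On $\GR$ the SRC is encoded by the exterior DtN map $\DtN$, which maps $\Hh{\GR} \to \Hmh{\GR}$ with norm $\lesssim |k|$ and has non-negative imaginary part; these are the only analytic facts about $\DtN$ needed below. After truncation, both (\ref{eq:DtN1}) and (\ref{eq:DtN2}) become Neumann-trace estimates on $\Gamma$ for the Helmholtz equation on $\OR$ with prescribed Dirichlet data on $\Gamma$ and transparent boundary condition on $\GR$.

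\textbf{Star-shaped Lipschitz case.} Here I would apply the standard Morawetz--Rellich multiplier $Mu := \bx\cdot\nabla u - \ri k u + \tfrac{d-1}{2} u$: forming $2\Re\bigl(\overline{Mu}(\Delta + k^2)u\bigr) = 0$ and integrating over $\OR$ produces, after integration by parts, a boundary integral on $\Gamma$ essentially of the form $\int_\Gamma (\bx\cdot \bn)\bigl(|\dnpu|^2 - |\nT(\gamma_+ u)|^2 - k^2|\gamma_+ u|^2\bigr)\,d\sigma$, plus a cross term, plus boundary contributions on $\GR$. Star-shapedness $\bx \cdot \bn \geq 0$ puts $|\dnpu|^2$ on the good side of the identity; the non-negativity of $\Im\langle \DtN\,\cdot,\cdot\rangle_{\GR}$ controls the $\GR$ terms; and Cauchy--Schwarz then yields (\ref{eq:DtN2}). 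The weaker bound (\ref{eq:DtN1}) follows by a Dirichlet-lift argument --- or, alternatively, by duality against the NtD bound proved in \cite{Sp:14} --- as described next.

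\textbf{Nontrapping cases.} For the smooth nontrapping and nontrapping-polygon settings I would reduce everything to the outgoing resolvent. Pick an extension $v$ of $\phi := \gamma_+ u$ into $\OR$, compactly supported near $\Gamma$, with $\N{v}_{H^1_k(\OR)} \lesssim \N{\phi}_{\Hh{\Gamma}}$; then $w := u - v$ satisfies the outgoing Helmholtz equation on $\Oe$ with zero Dirichlet data on $\Gamma$ and compactly supported source $f := -(\Delta + k^2) v$. The Vainberg--Morawetz--Ralston nontrapping resolvent bound (and its polygonal analogue from \cite{BaWu:13}) gives $\N{w}_{H^1_k(\OR)} \lesssim \N{f}_{\Lt{\OR}} \lesssim |k|\N{\phi}_{\Hh{\Gamma}}$; the trace theorem then produces $\N{\dnpu}_{\Hmh{\Gamma}} \lesssim |k|\N{\phi}_{\Hh{\Gamma}}$, i.e.~(\ref{eq:DtN1}). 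Running the same reduction with $\phi \in \HoG$ and an $H^1(\Gamma)$-bounded extension yields (\ref{eq:DtN2}). The same reduction applied in a smooth star-shaped domain (where Morawetz implies a nontrapping resolvent bound) covers the smooth subcase of the first scenario and provides the missing $\Hmh{\Gamma}$-estimate in the star-shaped case.

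\textbf{Main obstacle.} The hardest step is the Morawetz--Rellich identity in the \emph{Lipschitz} star-shaped case: $\bx\cdot\nabla u$ is not in general in $H^1(\OR)$, so the integration by parts must be justified by approximation, with convergence of each boundary integral controlled carefully. The most delicate is the $|\nT(\gamma_+ u)|^2$ contribution on $\Gamma$, which sits at the threshold of Lipschitz trace theory; controlling it requires the density/regularity lemmas used in \cite{Sp:14}. A parallel subtlety appears in the polygonal case, where the non-smoothness of $\Gamma$ complicates the trace-and-extension step involved in passing from the resolvent bound on $\OR$ to the Neumann-trace bound on $\Gamma$.
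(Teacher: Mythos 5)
There is a genuine gap in your treatment of the nontrapping and polygonal cases, and it is precisely the point where the sharp factor of $|k|$ is won or lost. You propose to take a compactly supported extension $v$ of $\phi:=\gamma_+u$ with $\N{v}_{H^1_k}\lesssim\N{\phi}_{\Hh{\Gamma}}$ and set $w:=u-v$, reducing to the resolvent estimate for the source $f:=-(\Delta+k^2)v$. But for a generic extension $f$ is not even in $L^2$ (since $\Delta v$ lives in $H^{-1}$ if $v$ is merely $H^1$), and if you upgrade to a smoother lift (say the cutoff harmonic extension, which does give $\Delta v\in L^2$ via the commutator with the cutoff), then $\N{f}_{L^2}$ is dominated by the $k^2\N{v}_{L^2}$ term, which costs $k^2\N{\phi}_{\Hmh{\Gamma}}$ rather than $\abs{k}\N{\phi}_{\Hh{\Gamma}}$. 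No fixed extension operator removes this loss, and this is exactly why earlier gluing arguments of this flavour (using $\Delta w-k^2w=0$ as the auxiliary equation in \cite{LaVa:12}, \cite{Sp:14}) produced only non-sharp bounds. Your sketch would reproduce those suboptimal powers of $k$, not the sharp statement of Theorem~\ref{thm:1}.

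The paper's argument differs in a small but essential way: instead of a generic lift, it takes as the ``extension'' the outgoing solution $w$ of the \emph{damped} Helmholtz equation $\Delta w+(k^2+\ri\abs{k})w=0$ in $\Oe$ with $\gamma_+w=\gamma_+u$, and sets $v:=u-\chi w$. The source $h=(\Delta+k^2)v$ is then compactly supported with $\N{h}_{L^2}\lesssim\abs{k}\N{w}_{L^2}+\N{w}_{H^1}\lesssim\N{w}_{H^1_k}$, and --- this is the key point --- the imaginary part of Green's identity for the damped equation yields $\abs{k}\N{w}^2_{L^2(\Oe)}\leq\N{\gamma_+w}_{\LtG}\N{\dnpw}_{\LtG}$, which when fed through the Ne\v{c}as estimate gives $\N{w}_{H^1_k(\Oe)}\lesssim\N{\gamma_+u}_{\HokG}$ with no extra power of $k$. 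The same gluing then handles all three geometric hypotheses uniformly, since the star-shaped case enters only through the resolvent estimate for the homogeneous Dirichlet problem (Theorem~\ref{thm:resolve}(c), itself proved via Morawetz), so there is no need to run a separate Rellich--Morawetz argument on $\OR$. Finally note that the paper proves \eqref{eq:DtN2} first and deduces \eqref{eq:DtN1} by the elementary interpolation in Lemma~\ref{lem:inter}, rather than by a direct $\Hh{\Gamma}$ extension-plus-trace argument; that order is the natural one given that Ne\v{c}as' identity only controls $L^2(\Gamma)$ norms.
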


\begin{theorem}[Bounds on the NtD map]\label{thm:2}
Let $\Oe$ be nontrapping (in the sense Definition \ref{def:nt1}) and let $u\in \Holoce(\Oe)$ satisfy the Helmholtz equation \eqref{eq:Helm} and the Sommerfeld radiation condition \eqref{eq:src}.
Let $\beta=2/3$ in the case when $\Gamma$ has strictly positive curvature, and $\beta=1/3$ otherwise.

Then, given $k_0>0$,
\beq\label{eq:NtD1}
\N{\gamma_+ u }_{\Hh{\Gamma}} \lesssim \abs{k}^{1-\beta} \N{\dnpu}_{\Hmh{\Gamma}},
\eeq
for all $\abs{k}\geq k_0$. Furthermore, if $\dnpu \in \LtG$ then $\gamma_+ u \in \HoG$ and, given $k_0>0$,
\beq\label{eq:NtD2}
\N{\nT (\gamma_+ u)}_{\Lt{\Gamma}} + \abs{k} \N{\gamma_+ u }_{\Lt{\Gamma}} \lesssim \abs{k}^{1-\beta} \N{\dnpu}_{\Lt{\Gamma}},
\eeq
for all $\abs{k}\geq k_0$.
\end{theorem}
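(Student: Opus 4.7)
My plan is to reduce Theorem~\ref{thm:2} to a sharp boundary-to-boundary semiclassical estimate for the outgoing exterior Neumann problem, exploiting the nontrapping exterior resolvent estimate together with refined restriction bounds at the glancing set. I would first prove \eqref{eq:NtD2} (the $L^2$-based bound) and then deduce \eqref{eq:NtD1} by duality---the NtD map is essentially self-adjoint for the exterior Helmholtz problem with real $k$---combined with complex interpolation between an $L^2 \to H^1(\Gamma)$ estimate and its dual $H^{-1}(\Gamma) \to L^2$ estimate.

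For \eqref{eq:NtD2}, fix a cutoff $\chi \in C^\infty_c(\R^d)$ equal to $1$ in a neighborhood of $\Oi$. A layer-potential or Green's-identity argument expresses $\chi u$ as the outgoing exterior Neumann resolvent $R_N(k)$ applied to a boundary source built from $g := \partial_n^+ u$. Under either nontrapping hypothesis, the Melrose--Sj\"ostrand propagation of singularities combined with a Vainberg--Burq-type parametrix construction yields $\|\chi R_N(k) \chi\|_{L^2 \to L^2} \lesssim |k|^{-1}$; semiclassical elliptic regularity then controls $u$ in the semiclassical $H^1$-norm in a neighborhood of $\Gamma$.

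The crux is translating this interior control into a sharp boundary bound on $\gamma_+ u$, and this is precisely where the dichotomy $\beta = 1/3$ vs.\ $\beta = 2/3$ appears. The loss $|k|^{1-\beta}$ reflects the structure of the outgoing solution at the glancing set $\{|\xi'| = |k|\} \subset T^*\Gamma$: in the general nontrapping case, a Tataru/Koch--Smith tangential restriction estimate (equivalent to a Smith--Sogge spectral cluster bound) yields the loss $|k|^{2/3}$; when $\Gamma$ has strictly positive curvature, the Melrose--Taylor diffractive parametrix (equivalently Friedlander's Airy-function model near glancing) gives the improved loss $|k|^{1/3}$, reflecting the rapid dispersion of grazing rays by strict convexity. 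The main obstacle is exactly this sharp boundary restriction at the glancing set: the strictly convex case requires a delicate Airy-type semiclassical expansion, the general case requires the Smith--Sogge/Tataru restriction bound, and neither reduces to a standard Sobolev trace theorem. Once the restriction is in place, the argument is bookkeeping: the composition of the resolvent bound with the boundary restriction gives \eqref{eq:NtD2}, and duality with interpolation then upgrades to \eqref{eq:NtD1}.
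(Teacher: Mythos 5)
Your proposal correctly identifies the key technical ingredients---Tataru-type restriction bounds at the glancing set, a Vainberg-type parametrix for the nontrapping Neumann resolvent, the dichotomy between strictly convex and general boundaries, and duality plus interpolation to pass from the $L^2\to H^1_k(\Gamma)$ bound to the $H^{-1/2}\to H^{1/2}$ bound. That last step is exactly what the paper does via Lemma~\ref{lem:inter} (cited from \cite[Lemma 2.3]{Sp:14}); your description of ``essentially self-adjoint'' should more precisely read ``transpose-symmetric with respect to the bilinear pairing, by Green's reciprocity,'' but the idea is right.

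The genuine gap is in the reduction step. You propose that ``a layer-potential or Green's-identity argument expresses $\chi u$ as the outgoing exterior Neumann resolvent $R_N(k)$ applied to a boundary source built from $g := \partial_n^+ u$.'' This does not work as stated: $R_N(k)$ is by definition the solution operator for the interior inhomogeneity $(\Lap+k^2)u=-f$ with \emph{homogeneous} Neumann boundary condition, so it cannot directly absorb boundary data $g$. A single-layer representation $u=\mathcal{S}_k g$ is available, but the nontrapping resolvent bound on $R_N(k)$ does not then apply to it, and you are back to needing a semiclassical estimate on the Neumann single-layer potential--- which is essentially the problem you were trying to solve. The paper sidesteps this by working in the \emph{time domain}: it solves the wave equation on $\Omega_+\times\RR$ with oscillatory Neumann data $\varphi(t)\re^{-\ri\kappa t}g_N$, applies Tataru's space-time boundary-regularity estimates (Theorem~\ref{theorem:tataru}) there, takes an inverse Fourier transform in $t$, and only \emph{then} corrects the resulting compactly-supported interior error $e_\kappa=(\Lap+k^2)\mathcal{F}^{-1}(\psi v_\kappa)$ with $R_N(k)$. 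Two further nontrivial pieces are missing from your sketch: (i) you need a bound on the \emph{boundary trace} of $R_N(k)e_\kappa$, which is Lemma~\ref{lemma:neumannresolvent} and requires returning to the details of the Vainberg parametrix and combining it with Tataru's estimate for solutions with interior inhomogeneity (Theorem~\ref{theorem:tataru2}), since a bare $L^2\to L^2$ resolvent bound plus ``semiclassical elliptic regularity'' controls only the interior and not the trace at the right scaling; and (ii) the tangential-derivative part of \eqref{eq:NtD2} is obtained from the Rellich--Ne\v{c}as identity (Lemma~\ref{lem:Necas}) together with Lemma~\ref{lem:H1L2}, not directly from the restriction estimate. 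Without these, the ``bookkeeping'' does not close.
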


By considering the specific examples of $\Gamma$ the unit circle (in 2-d) and the unit sphere (in 3-d) and using results about the asymptotics of Bessel and Hankel functions, it was shown in \cite[Lemmas 3.10, 3.12]{Sp:14} that the bounds \eqref{eq:DtN1} and \eqref{eq:DtN2} are sharp, and that \eqref{eq:NtD1} and \eqref{eq:NtD2} are sharp in the case of strictly positive curvature.

We prove the DtN bound \eqref{eq:DtN2} and can then get a bound on the DtN map between a range of Sobolev spaces by interpolation. Of this range, the bound \eqref{eq:DtN1} is the most interesting (since it is between the natural trace spaces for solutions of the Helmholtz equation) and thus we state it explicitly; similarly for \eqref{eq:NtD2} and \eqref{eq:NtD1}.

Our next result concerns the IIP under the following assumption about
the impedance parameters $\eta$.
We permit a more general assumption on $\eta$ than that specified in the
introduction: it can be variable, and need only have nonzero real part
with a linear rate of growth in
$k.$

\begin{assumption}[A particular class of $\eta$]\label{ass:eta}
$\eta(x) := \notalpha(x)k + \ri \notbeta(x)$ where 
$\notalpha, \notbeta$ are real-valued $C^\infty$ functions on $\Gamma$, $\notbeta\geq 0$ on $\Gamma$, and there exists an $\notalpha_->0$ such that \emph{either}
\beqs
\notalpha(x) \geq \notalpha_- >0 \quad \tfa x \in \Gamma \text{ \emph{or} } -\notalpha(x) \geq \notalpha_- >0 \quad \tfa x \in \Gamma.
\eeqs
\end{assumption}

For purposes of obtaining estimates valid down to $k=0$ (and in
particular, to make contact with applications in the work of
Epstein, Greengard, and Hagstrom~\cite{EpGrHa:15}) we will also state another, stronger, set
of hypotheses on $\eta.$

\begin{assumption}[Another class of $\eta$]\label{ass:eta2}
$\eta(x) := \notalpha(x)k + \ri \notbeta(x)$ where 
$\notalpha, \notbeta$ are real-valued $C^\infty$ functions on $\Gamma$ and there exists $\notalpha_->0$, $\notbeta_->0$ such that
\beqs
\notalpha(x) \geq \notalpha_- >0 \quad \tfa x \in \Gamma \text{ and } \notbeta(x) \geq \notbeta_- >0 \quad \tfa x \in \Gamma.
\eeqs
\end{assumption}

In our discussion of the impedance problem, we use $\Omega$ to denote
the domain where the IIP is posed (instead of $\Oi$), since we do not
need the restriction that we imposed on $\Oi$ that the open complement
is connected.

\bth[Bounds on the solution to the interior impedance problem]\label{thm:3}
Let $\Omega$ be a bounded $C^\infty$ open set in 2- or 3-dimensions with boundary $\Gamma$.
Given $g\in \LtG$, $f\in\LtO$, and $\eta$ satisfying Assumption \ref{ass:eta}, let $u \in H^1(\Omega)$ be 
be the solution to
the interior impedance problem 
\beq\label{eq:IIP}
\Delta u + k^2 u = -f \quad\mbox{ in } \Omega\quad\tand\quad
\dnu - \ri \eta \gamma u = g \quad \mbox{ on }\Gamma.
\eeq
Then
\beq\label{eq:thm3}
\N{\gu}_{\LtO} + \abs{k} \N{u}_{\LtO}
\lesssim
\N{f}_{\LtO}+ \N{g}_{\LtG}
\eeq
for all $k \in \RR.$ If the stronger Assumption~\ref{ass:eta2} holds,
estimate \eqref{eq:thm3} holds with $1+\abs{k}$ replacing $\abs{k}.$
\end{theorem}

The bound \eqref{eq:thm3} is sharp. Indeed, in \cite[Lemma
4.12]{Sp:14} it was proved that given any bounded Lipschitz domain,
there exists an $f$ such that the solution of the IIP with $g=0$ and
this particular $f$ satisfies $\abs{k}\|u\|_{\LtO}\gtrsim
\|f\|_{\LtO}$. Furthermore Lemma \ref{lem:ball} shows that if $\Omega$
is a ball and $f=0$ then there exists a $g$ such that the solution of
the IIP with $f=0$ and this particular $g$ satisfies
$\abs{k}\|u\|_{\LtO}\gtrsim \|g\|_{\LtG}$.

Note that Assumption \ref{ass:eta} includes the cases $\eta=\pm k$, and thus the bound \eqref{eq:thm3} holds 
for the two most-commonly occurring impedance boundary conditions, namely $\dnu - \ri k \gamma u=g$ and $\dnu + \ri k \gamma u =g$.

For our application of this result to integral equations, we state a result on the Dirichlet trace of the solution of the IIP.

\begin{corollary}[Bound on the interior impedance-to-Dirichlet map]\label{cor:ItD}
  Let $\Omega$ be a bounded $C^\infty$ domain in 2- or 3-d
  with boundary $\Gamma$.
Given $f\in \LtO$, $g\in \LtG$, and $\eta$ satisfying Assumption \ref{ass:eta}, let $u \in H^1(\Omega)$ be 
be the solution to
the interior impedance problem \eqref{eq:IIP}. Then
\beq\label{eq:ItD}
\N{\nT(\gamma u)}_{\LtG} + \abs{k} \N{\gamma u}_{\LtG} \lesssim \N{f}_{L^2(\Omega)}+\N{g}_{\LtG}
\eeq
for all $k \in \RR.$
If the stronger Assumption~\ref{ass:eta2} holds,
estimate \eqref{eq:ItD} holds with $1+\abs{k}$ replacing $\abs{k}.$
\end{corollary}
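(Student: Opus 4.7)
My plan is to derive the two terms on the left of~\eqref{eq:ItD} from two integration-by-parts identities, in each case using Theorem~\ref{thm:3} to dispose of the volume terms that arise. I would treat $|k|\N{\gamma u}_\LtG$ first, via the imaginary part of Green's first identity, and then bound $\N{\nT(\gamma u)}_\LtG$ via a Rellich--Morawetz multiplier identity combined with the impedance boundary condition and the $L^2(\Gamma)$ bound just obtained.

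For the weighted-$L^2(\Gamma)$ bound, Green's first identity applied to $u,\ub$ gives
\beqs
\int_\Omega |\gu|^2 \;-\; k^2\int_\Omega |u|^2 \;=\; \int_\Gamma (\dnu)\,\ub \;+\; \int_\Omega f\,\ub.
\eeqs
Taking imaginary parts annihilates the (real) volume expression on the left; substituting the impedance relation $\dnu=g+\ri\eta\,\gamma u$ and writing $\eta=\notalpha k+\ri\notbeta$ then leaves
\beqs
\int_\Gamma k\,\notalpha\,|\gamma u|^2 \;=\; -\,\Im\!\int_\Gamma g\,\ub \;-\; \Im\!\int_\Omega f\,\ub.
\eeqs
Under Assumption~\ref{ass:eta}, $\notalpha$ has constant sign with $|\notalpha|\geq\notalpha_->0$, so a Young-inequality splitting of $\N{g}_\LtG\N{\gamma u}_\LtG$ together with the interior bound $\N{u}_\LtO\lesssim |k|^{-1}(\N{f}_\LtO+\N{g}_\LtG)$ from Theorem~\ref{thm:3} yields $|k|\N{\gamma u}_\LtG\lesssim \N{f}_\LtO+\N{g}_\LtG$. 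Under the stronger Assumption~\ref{ass:eta2}, the \emph{real} part of the same identity contributes the coercive boundary term $\int_\Gamma\notbeta\,|\gamma u|^2$ (since $\Re(\ri\eta)=-\notbeta$ and $\notbeta\geq\notbeta_->0$), giving the additional bound $\N{\gamma u}_\LtG\lesssim \N{f}_\LtO+\N{g}_\LtG$ valid uniformly down to $k=0$, hence the $1+|k|$ version.

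To control $\N{\nT(\gamma u)}_\LtG$ I would use the Rellich--Morawetz multiplier $Zu:= b\cdot\nabla u$, where $b$ is any smooth vector field on $\overline{\Omega}$ with $b|_\Gamma=\mathbf{n}$ and uniform bounds on $\nabla b$ and $\nabla\cdot b$ (such a $b$ exists because $\Gamma$ is $C^\infty$). Computing $2\Re\int_\Omega(Z\ub)(\Delta u+k^2u)$ and integrating the Laplacian term by parts, and using $b\cdot\mathbf{n}=1$ on $\Gamma$ (so $Z\ub|_\Gamma=\partial_n\ub$), produces the identity
\beqs
\int_\Gamma\bigl(|\dnu|^2 - |\nT(\gamma u)|^2 + k^2|\gamma u|^2\bigr) \;=\; -\int_\Omega \cV(u) \;-\; 2\Re\!\int_\Omega (b\cdot\nabla\ub)\,f,
\eeqs
where the pointwise volume quadratic $\cV(u)$ satisfies $|\cV(u)|\lesssim |\gu|^2+k^2|u|^2$ with a constant depending only on $b$. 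Solving for the \emph{wrong-sign} term $\int_\Gamma|\nT(\gamma u)|^2$, bounding $\N{\dnu}_\LtG^{\,2}\lesssim \N{g}_\LtG^{\,2}+(1+k^2)\N{\gamma u}_\LtG^{\,2}$ via the impedance relation and $|\eta|^2\lesssim 1+k^2$, and then inserting the interior bound $\N{\gu}_\LtO+|k|\N{u}_\LtO\lesssim \N{f}_\LtO+\N{g}_\LtG$ from Theorem~\ref{thm:3} together with the just-established $|k|\N{\gamma u}_\LtG$ bound, delivers~\eqref{eq:ItD}.

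The main obstacle, I expect, is the bookkeeping for the Rellich identity: the tangential-gradient term appears on $\Gamma$ with the \emph{wrong} sign, which forces us to control every other boundary and volume term by the data, with no chance to absorb back on the left. What makes the argument succeed is precisely that Theorem~\ref{thm:3} and the first step already supply $O(\N{f}_\LtO+\N{g}_\LtG)$ bounds on all of those other quantities. A minor technical issue is justifying the pointwise trace $\dnu\in\LtG$ required by the identity; this follows since $\Gamma$ is $C^\infty$ and $\Delta u=-k^2u-f\in\LtO$, so boundary elliptic regularity gives $u\in H^2$ in a neighbourhood of $\Gamma$, and the identity may either be applied directly or obtained by approximating $f,g$ by smooth data and passing to the limit.
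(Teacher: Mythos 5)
The proposal is correct and takes essentially the same approach as the paper: the paper obtains $|k|\,\N{\gamma u}_\LtG$ from the imaginary part of Green's identity (and the real part for the stronger Assumption~\ref{ass:eta2}), then invokes the Ne\v{c}as/Rellich-type estimate of Lemma~\ref{lem:Necas}(ii) adapted to the bounded domain for $\N{\nT(\gamma u)}_\LtG$; you simply write out the Rellich--Morawetz identity explicitly rather than citing it as a black box.
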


We now state two further corollaries, which are relevant for the numerical analysis of finite-element discretizations for the IIP. For simplicity, we state them for $|k|$ bounded away from zero.

\begin{corollary}[Bound on the inf-sup constant]\label{cor:infsup}
Let $\Omega$ be a bounded $C^\infty$ domain in 2- or 3-d
  with boundary $\Gamma$.
Given $f\in (H^1(\Omega))'$, $g\in H^{-1/2}(\Gamma)$, and $\eta$ satisfying Assumption \ref{ass:eta}, let $u \in H^1(\Omega)$ be the solution to
the interior impedance problem \eqref{eq:IIP}. Then, given $k_0>0$,
\beq\label{eq:infsup}
\N{\gu}_{\LtO} + \abs{k} \N{u}_{\LtO}
\lesssim|k| \left(
\N{f}_{(H^1(\Omega))'}+ \N{g}_{\HmhG}\right)
\eeq
for all $\abs{k}\geq k_0$. Furthermore,
\beq\label{eq:infsup2}
\inf\limits_{0\neq u\in H^1(\Omega)} \sup\limits_{0\neq v\in H^1(\Omega)}
\frac{|a(u,v)|}{\N{u}_{H_k^1(\Omega)} \N{v}_{H_k^1(\Omega)} } \gtrsim \frac{1}{\abs{k}},
\eeq
where $a(\cdot,\cdot)$, defined by \eqref{eq:sesqui} below, is the sesquilinear form of the variational formulation of the interior impedance problem, and $\|\cdot\|_{H_k^1(\Omega)}$ is the weighted $H^1$-norm defined by \eqref{eq:H1def} below.
\end{corollary}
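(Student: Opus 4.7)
The plan is to obtain the inf-sup estimate \eqref{eq:infsup2} by a Schatz-type test-function argument that invokes Theorem~\ref{thm:3} applied to an adjoint IIP, and then to read off the a priori bound \eqref{eq:infsup} by pairing $a(u,\cdot)$ against the rough data in their natural dual norms.

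Given any $u \in H^1(\Omega)$, I would introduce a correction $z \in H^1(\Omega)$ defined as the weak solution of
\[
\Delta z + k^2 z = -2k^2 u \tin \Omega, \qquad \pd[n] z + \ri\bar\eta\,\gamma z = 0 \ton \Gamma.
\]
An integration by parts shows that this is the same as $a(w,z) = 2k^2 (w,u)_{\LtO}$ for every $w \in H^1(\Omega)$, so in particular $a(u,z) = 2k^2 \N{u}_{\LtO}^2$. The impedance parameter for $z$ is $-\bar\eta = -\notalpha k + \ri \notbeta$, which again satisfies Assumption~\ref{ass:eta}: the hypothesis on $\notalpha$ is symmetric under $\notalpha\to -\notalpha$, and $\notbeta$ is unchanged. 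Theorem~\ref{thm:3} therefore applies to $z$ with data $(2k^2 u, 0) \in \LtO \times \LtG$ and yields
\[
\N{z}_{H^1_k(\Omega)} \lesssim k^2 \N{u}_{\LtO} \le \abs{k}\, \N{u}_{H^1_k(\Omega)}.
\]

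With $v := u + z$, a direct expansion gives $a(u,v) = a(u,u) + 2k^2 \N{u}_{\LtO}^2 = \N{u}_{H^1_k(\Omega)}^2 - \ri \int_\Gamma \eta |\gamma u|^2$, and since $\Im \eta = \notbeta \ge 0$, $\Re a(u,v) \ge \N{u}_{H^1_k(\Omega)}^2$. Combined with $\N{v}_{H^1_k(\Omega)} \lesssim \abs{k}\, \N{u}_{H^1_k(\Omega)}$ from the previous step, this gives
\[
\N{u}_{H^1_k(\Omega)}^2 \le |a(u,v)| \lesssim \abs{k}\, \N{u}_{H^1_k(\Omega)} \sup_{0 \neq w \in H^1(\Omega)} \frac{|a(u,w)|}{\N{w}_{H^1_k(\Omega)}},
\]
and dividing by $\N{u}_{H^1_k(\Omega)}$ and taking the infimum over nonzero $u$ yields \eqref{eq:infsup2}. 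For \eqref{eq:infsup}, using that $u$ solves the IIP so $a(u,w) = \langle f,w\rangle + \langle g,\gamma w\rangle$, the trace theorem combined with $\N{w}_{H^1(\Omega)} \le \N{w}_{H^1_k(\Omega)}$ (valid for $|k| \ge k_0$ with a constant depending on $k_0$) gives
\[
|a(u,w)| \lesssim \bigl(\N{f}_{(H^1(\Omega))'} + \N{g}_{\HmhG}\bigr) \N{w}_{H^1_k(\Omega)};
\]
inserting this into the inf-sup bound produces \eqref{eq:infsup}.

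The only real subtlety is identifying the correction equation $a(w,z) = 2k^2 (w,u)_{\LtO}$ with a genuine IIP in the class covered by Assumption~\ref{ass:eta}, so that Theorem~\ref{thm:3} is available to control $\N{z}_{H^1_k(\Omega)}$; once that observation is in place, the remainder is a routine test-function/duality computation.
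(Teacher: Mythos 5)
Your argument is correct, but it takes a genuinely different route from the paper's. The paper decomposes the \emph{solution} $u = u_0 + w$, where $u_0$ solves the coercive problem $a_0(u_0,v)=F(v)$ (obtained by flipping the sign of $k^2$ in the volume term) and is controlled directly via Lax--Milgram, and $w$ solves the IIP with $f = 2k^2 u_0$ and is controlled by Theorem~\ref{thm:3}. This yields the a priori estimate $\N{u}_{H^1_k}\lesssim |k|\N{F}_{(H^1_k)'}$ first, and the inf-sup bound \eqref{eq:infsup2} is then deduced by invoking an abstract result (\cite[Theorem~2.1.44]{SaSc:11}). You instead run the dual of this decomposition on the \emph{test-function} side: for an arbitrary $u\in H^1(\Omega)$ you construct the correction $z$ by solving the \emph{adjoint} IIP (impedance $-\bar\eta$, which also satisfies Assumption~\ref{ass:eta} since the hypothesis on $\notalpha$ is symmetric under sign change), and then use $v=u+z$ as a Schatz-type test function to obtain $\Re a(u,v)\ge\N{u}^2_{H^1_k}$ directly. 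This gives \eqref{eq:infsup2} without any appeal to the abstract stability-implies-inf-sup machinery, and \eqref{eq:infsup} then follows as you say by pairing $F$ against $w$ in the dual norms. Both proofs hinge on exactly the same two ingredients --- the trick of adding $2k^2\N{\cdot}^2_{L^2}$ to restore coercivity, and Theorem~\ref{thm:3} to control the resulting IIP --- so the mathematical content is equivalent, but your version is more self-contained (no citation needed for the inf-sup step) at the cost of having to verify that the adjoint problem still falls under Assumption~\ref{ass:eta}, a point you correctly address.
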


\begin{corollary}[Bound on the $H^2$-norm]\label{cor:H2}
  Let $\Omega$ be a bounded $C^\infty$ domain in 2- or 3-d
  with boundary $\Gamma$.
Given $f\in \LtO$, $g\in \HhG$, and $\eta$ satisfying Assumption \ref{ass:eta}, let $u \in H^1(\Omega)$ be 
be the solution to
the interior impedance problem \eqref{eq:IIP}. Then, given $k_0>0$, 
\beq\label{eq:H2}
\N{u}_{H^2(\Omega)} \lesssim \abs{k} \left(
\N{f}_{\LtO}+ \N{g}_{\HhG}\right)
\eeq
for all $\abs{k}\geq k_0$.
\end{corollary}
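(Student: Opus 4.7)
The plan is to reduce \eqref{eq:H2} to the weighted estimates already established in Theorem \ref{thm:3} and Corollary \ref{cor:ItD} via the classical $H^2$-elliptic regularity estimate for $-\Delta$ on the smooth domain $\Omega$:
\[
\|u\|_{H^2(\Omega)} \lesssim \|\Delta u\|_{L^2(\Omega)} + \|\partial_n u\|_{H^{1/2}(\Gamma)} + \|u\|_{H^1(\Omega)}.
\]
It thus suffices to bound each of the three terms on the right by $|k|\bigl(\|f\|_{L^2(\Omega)}+\|g\|_{H^{1/2}(\Gamma)}\bigr)$.

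For the interior terms, I would use the equation $\Delta u = -f - k^2 u$ together with Theorem \ref{thm:3} to convert one factor of $|k|$ from $k^2\|u\|_{L^2}$ into data: $k^2\|u\|_{L^2(\Omega)} = |k|\cdot |k|\|u\|_{L^2(\Omega)} \lesssim |k|\bigl(\|f\|_{L^2(\Omega)}+\|g\|_{L^2(\Gamma)}\bigr)$. The third term $\|u\|_{H^1(\Omega)}$ is controlled directly by Theorem \ref{thm:3} with no factor of $|k|$, so only the boundary term remains and it is this piece that will supply the overall factor of $|k|$ in \eqref{eq:H2}.

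For the boundary term I would substitute the impedance condition $\partial_n u = g + \ri\eta \gamma u$, so that the work reduces to estimating $\|\eta\,\gamma u\|_{H^{1/2}(\Gamma)}$. Since $\eta = \notalpha k + \ri \notbeta$ with $\notalpha,\notbeta \in C^\infty(\Gamma)$, multiplication by $\eta$ is bounded on $H^{1/2}(\Gamma)$ with operator norm $O(|k|)$. To see that $\gamma u$ lies in $H^{1/2}(\Gamma)$ with a $|k|$-\emph{independent} bound, I would invoke Corollary \ref{cor:ItD}: its conclusion immediately gives $\|\gamma u\|_{H^1(\Gamma)} \lesssim \|f\|_{L^2(\Omega)}+\|g\|_{L^2(\Gamma)}$ for $|k|\ge k_0$, and hence a fortiori the same bound for $\|\gamma u\|_{H^{1/2}(\Gamma)}$. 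Combining these two facts yields $\|\eta\,\gamma u\|_{H^{1/2}(\Gamma)} \lesssim |k|\bigl(\|f\|_{L^2(\Omega)}+\|g\|_{H^{1/2}(\Gamma)}\bigr)$, which completes the boundary estimate and hence the corollary.

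I do not anticipate any genuine obstacle here: the smoothness of $\Gamma$ and of $\notalpha,\notbeta$ in Assumption \ref{ass:eta} make the multiplier $\eta$ well-behaved on every Sobolev scale used, and the only mildly delicate step is the bookkeeping in the boundary estimate, where the $|k|$ from the multiplier $\eta$ combines with the $|k|$-independent $H^1(\Gamma)$-bound on $\gamma u$ from Corollary \ref{cor:ItD} to produce exactly the single factor of $|k|$ appearing in \eqref{eq:H2}.
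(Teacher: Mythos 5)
Your proof is correct and follows essentially the same route as the paper: both rest on the same global $H^2$ elliptic regularity estimate $\N{u}_{H^2(\Omega)} \lesssim \N{\Delta u}_{L^2(\Omega)} + \N{u}_{H^1(\Omega)} + \N{\dnu}_{H^{1/2}(\Gamma)}$ (the paper cites Grisvard for this) combined with the $H^1_k$ bound of Theorem~\ref{thm:3}, rewriting $\Delta u = -f-k^2u$ and substituting the impedance condition into the $\dnu$ term. The only cosmetic difference is that you bound $\|\gamma u\|_{H^{1/2}(\Gamma)}$ via Corollary~\ref{cor:ItD} rather than the trace theorem applied to the $H^1(\Omega)$ bound from Theorem~\ref{thm:3}; both give the needed $k$-uniform control, so this is not a different method.
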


Shifting to a slightly different perspective, having proved the bound
\eqref{eq:thm3} for real $k$ it is natural to impose the
\emph{homogeneous} impedance boundary condition $\dnu- \ri\eta \gamma
u=0$ and consider the resolvent-like operator family defined by solving the Helmholtz
equation with this ($k$-dependent!) boundary-condition.   That is, we define
$$
\Rimp(k):
L^2(\Omega)\rightarrow L^2(\Omega)$$ by
$$\Rimp(k) f=u,$$ where $u$ is the solution to $$(\Delta
+k^2)u=f$$ satisfying $$\dnu- \ri \eta \gamma u=0.$$ If $\eta$ satisfies
Assumption~\ref{ass:eta} then $\Rimp(k)$ is well defined when $k\in
\Rea\setminus\{0\}$.  Meanwhile, the strict positivity of $\notalpha$ implies
that $\Rimp(k)$ is well defined and
holomorphic for $\Im k> 0.$ We
can then use a simple perturbation argument to show the existence of
regions beneath the real
axis free of poles (the equivalent of ``resonances'' in this compact,
non-self-adjoint setting); if we strengthen our assumptions to strict
positivity of $\notbeta,$ this yields a full pole-free strip beneath the real
axis, while mere nonnegativity leaves the possibility of a singularity
at $k=0.$

The following result is stated with the stronger hypothesis and
consequent pole-free strip.

\bth[Pole-free strip beneath the real axis]\label{thm:pole}
%There exists an $\eps>0$ such that 
The operator family $\Rimp(k):\LtO\rightarrow \LtO$ defined as the inverse
of $(\Lap+k^2)$ with boundary condition $\dnu - \ri \eta \gamma u=0$, where
$\eta$ satisfies Assumption \ref{ass:eta2}, is holomorphic on $\Im k
>0$. Furthermore there exist an $\eps>0$ such that $\Rimp(k)$
extends from the upper-half plane to a holomorphic operator family on $\Im
k> -\eps$, satisfying the uniform estimate \beq\label{eq:55}
\N{\Rimp(k)}_{\LtO \to \LtO} \lesssim (1+ |k|)^{-1} \eeq in that
region.
\end{theorem}

\subsection{Discussion of previous results related to Theorems \ref{thm:1}--\ref{thm:3} and \ref{thm:pole}, and high-frequency estimates for the Helmholtz equation in general}\label{sec:previous}

%\paragraph{Previous results on the DtN and NtD maps}. 
The main previously-existing sharp bound for one of the DtN and NtD maps is
the bound \eqref{eq:DtN2} proved when $\Oi$ is a Lipschitz domain that is
star-shaped with respect to a ball (in the sense of Part (ii) of Definition
\ref{def:star}). This bound was proved by Morawetz and Ludwig in \cite{MoLu:68} without the
smoothness requirements of the boundary explicitly stated, but the same
techniques apply to Lipschitz domains, modulo some additional technical
work; see \cite[Remark 3.8]{Sp:14} and \cite[Appendix A]{MoSp:14}.
The DtN bounds \eqref{eq:DtN1} and \eqref{eq:DtN2} were also obtained in the strictly convex case by Cardoso,
Popov, and Vodev in \cite{CaPoVo:01} as well as by Sj\"ostrand
\cite{Sj:14}; see also the parametrix construction in the appendix of
\cite{StVo:95}.
Non-sharp bounds on the DtN and NtD maps were proved in \cite{Ba:71},
\cite{LaVa:12}, and \cite{Sp:14}; see \cite[\S1.2]{Sp:14} for a discussion
of all these results.

Of the bounds on the IIP in the literature, the only previously-existing
sharp result was that \eqref{eq:thm3} holds when $\Omega$ is Lipschitz and
star-shaped with respect to a ball. This was proved in 2-d when $\Gamma$ is
piecewise smooth by Melenk \cite[Proposition 8.1.4]{Me:95} and in 3-d by
Cummings and Feng \cite[Theorem 1]{CuFe:06}. The technical work referred to
above can then be used to establish the bound when $\Gamma$ is Lipschitz
(see, e.g., \cite[Theorem 2.6]{GaGrSp:15} where the analogue of this bound
is proved for a more general class of wavenumbers). By the discussion
immediately after Theorem \ref{thm:3}, this bound for star-shaped Lipschitz
domains is sharp. Bounds for general Lipschitz domains with positive powers
of $k$ in front of both $\|f\|_{\LtO}$ and $\|g\|_{\LtG}$ were obtained in
\cite[Theorems 3.6 and 4.7]{FeSh:94}, \cite[Theorem 2.4]{EsMe:12}, and
\cite[Theorem 1.6]{Sp:14}; see \cite[\S1.2]{Sp:14} for more discussion.

Regarding the pole-free strip result of Theorem \ref{thm:pole},
  the analogous result for the exterior impedance problem follows from
  the exponential decay result of \cite{Al:02} for the wave equation
  with damped boundary conditions (in an analogous way to how Theorem
  \ref{thm:pole} followed from the exponential decay in
  \eqref{eq:BLR2}).
Furthermore, the recent work of \cite{Pe:15} on the exterior impedance problem
%We also note that recent work on the exterior impedance problem
%\cite{Pe:15} 
gives quite precise bounds on the locations of poles much deeper
in the lower half-space than those considered here.

A crucial ingredient in the estimates obtained in this paper is the \emph{nontrapping resolvent
    estimate,} which we use to solve away errors for both Dirichlet and Neumann
  exterior problems.  If $\Omega_+$ is nontrapping, we
  have for any $\chi \in C^\infty_c(\Omega_+)$
\begin{equation}\label{nontrappingresest}
\norm{\chi (\Lap+k^2)^{-1} \chi}_{L^2(\Omega_+) \to L^2(\Omega_+)}\leq
C (1+\smallabs{k})^{-1},\quad k \in \RR
\end{equation}
(see Theorem~\ref{thm:resolve} below for a slightly refined
formulation and generalizations).  This result follows from a
combination of two separate ingredients.  By work on propagation of
singularities for the wave equation on manifolds with boundary by
Melrose \cite{Me:75}, Taylor \cite{Ta:76}, and Melrose--Sj\"ostrand
\cite{MeSj:78}, we know that solutions to the wave equation on
nontrapping domains with compactly supported initial data become
smooth for $t \gg 1.$ A parametrix method of Vainberg \cite{Va:75} or
the methods of Lax--Phillips \cite{LaPh:89} can then be used to turn
this ``weak Huygens principle'' into a resolvent estimate (and indeed
to obtain a
region of analyticity below the real axis for the analytic
continuation of the cutoff resolvent).  The estimate
\eqref{nontrappingresest} is known to fail, by contrast, whenever
there are trapped orbits, by work of Ralston \cite{Ra:69}.  We mainly
use the estimate \eqref{nontrappingresest} as a black box in our
estimates below, but we do need to
return to the Vainberg parametrix construction to prove a
variant of \eqref{nontrappingresest} that deals with Dirichlet data for the
nontrapping Neumann resolvent (Lemma~\ref{lemma:neumannresolvent}.)

\subsection{The main ideas used to obtain Theorems \ref{thm:1}-\ref{thm:3} and \ref{thm:pole}}

We now give a brief overview of how the main results were obtained, with more detail naturally given in \S\ref{sec:DtN}-\ref{sec:IIP}.

In contrast to the proofs of the bounds on the NtD map and IIP, 
our proof of the DtN map bounds in Theorem \ref{thm:1} takes places solely in the setting of 
stationary
scattering theory, i.e., we never consider the associated problem for the wave equation. We use a ``gluing" argument, where 
outgoing solutions for the far-field are ``glued" to solutions of an ``auxiliary problem" in a bounded region. This type of argument goes back at least to Lax and Phillips \cite[\S5]{LaPh:73} and was used to obtain (non-sharp) bounds on the DtN map in \cite{LaVa:12} and \cite{Sp:14}. Our contribution is to choose a different auxiliary problem to that considered in \cite{LaVa:12} and \cite{Sp:14}, with this change then yielding the sharp result.

The main ingredient for our proof of the NtD map bounds in Theorem \ref{thm:2} is a collection of restriction bounds for solutions of the wave equation with Neumann boundary conditions due to Tataru \cite{Ta:98}. These are used in conjunction with the Vainberg parametrix construction briefly discussed in \S\ref{sec:previous} above.

For the bound on the IIP in Theorem \ref{thm:3} we use the results of Bardos, Lebeau, and Rauch \cite{BaLeRa:92} on exponential decay of the energy of solutions of the wave equation with damped boundary conditions, with the estimate \eqref{eq:thm3} obtained by a Fourier-transform argument. Once \eqref{eq:thm3} has been established for $k\in \mathbb{R}$, the pole-free strip result in Theorem \ref{thm:pole} then follows by a standard perturbation argument.

\subsection{Application of the above results to integral equations}\label{sec:1-3}

As mentioned above, the results of Theorems \ref{thm:1}, \ref{thm:2}, and \ref{thm:3} can be applied to integral equations. Our main result in this direction concerns the standard integral equation used to solve the Helmholtz exterior Dirichlet problem.

When $u$ is the solution to the Helmholtz exterior Dirichlet problem, the Neumann trace of $u$, $\dnpu$, satisfies the integral equation
\beq\label{eq:CFIE}
\opA (\dnpu) = f_{k,\eta}
\eeq
on $\Gamma$, where the integral operator $\opA$ is the so-called \emph{combined-potential} or \emph{combined-field} integral operator (defined by \eqref{eq:CFIEdef} below), 
$f_{k,\eta}$ is given in terms of the known Dirichlet data $\gamma_+ u$ (see \eqref{eq:CFIE2}).
Usually the parameter $\eta$ is a real constant different from zero, but in
fact $\eta$ will also be allowed to be a function of position on $\Gamma.$
% (see \S\ref{sec:int} below for more discussion).

We introduce the notation that $\DtN$ denotes the exterior DtN map, as a mapping from $H^{s+1/2}(\Gamma) \rightarrow H^{s-1/2}(\Gamma)$ for $|s|\leq 1/2$, and $\ItD$ denotes the interior impedance-to-Dirichlet map, as a mapping from $H^{s-1/2}(\Gamma) \rightarrow H^{s-1/2}(\Gamma)$ for $|s|\leq 1/2$ (see \S\ref{sec:2-1} below and \cite[Theorems 2.31 and 2.32]{ChGrLaSp:12} for details on how these maps are defined for these ranges of spaces).

The inverse of $\opA$ can be written in terms of the exterior DtN map $\DtN$ and interior impedance to Dirichlet map $\ItD$ as follows 
\beq\label{eq:key}
\opAinv = I - (\DtN - \ri \eta ) \ItD;
\eeq
this decomposition is implicit in much of the work on the
combined-potential operator $\opA$, but (to the authors' knowledge)
was first written down explicitly in \cite[Theorem
2.33]{ChGrLaSp:12}. We give another, more intuitive, proof of this
result in Lemma~\ref{lemma:inverses} below. 

The operator $\opA$ is usually considered as a operator from $\LtG$ to itself (the reasons for this are explained in \S\ref{sec:int}) and the bounds on the exterior DtN map and interior impedance-to-Dirichlet map in Theorem \ref{thm:1} and Corollary \ref{cor:ItD} immediately yield the following bound on $\|\opAinv\|_{\LtGt}$.

\begin{theorem}\label{thm:CFIE}
Let $\Oe\subset \Rea^d$, $d=2,3$, be a nontrapping domain and suppose that $\eta$ satisfies Assumption \ref{ass:eta}. 
Then, given $k_0>0$,
\beq\label{eq:Ainv_bound_main}
\normAinv_{\LtGt} \lesssim 1
\eeq
for all $\abs{k}\geq k_0$.
\end{theorem}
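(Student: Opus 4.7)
The plan is to apply the operator identity $\opAinv = I - (\DtN - \ri\eta)\ItD$ from \eqref{eq:key} and to bound each factor as a map $L^2(\Gamma)\to L^2(\Gamma)$. The identity term is trivially bounded, so the work amounts to estimating $(\DtN - \ri\eta)\ItD$ by composing the interior impedance-to-Dirichlet bound of Corollary \ref{cor:ItD} with the sharp $L^2$ DtN bound \eqref{eq:DtN2} of Theorem \ref{thm:1}.

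Fix $\phi\in L^2(\Gamma)$. Let $w$ solve the interior impedance problem with $f=0$ and $g=\phi$, so that $\ItD\phi = \gamma w$. Since $\Oe$ nontrapping (in the sense of Definition \ref{def:nt1}) forces $\Oi$ to be a bounded $C^\infty$ domain, Corollary \ref{cor:ItD} applies and gives
$$\|\nT(\gamma w)\|_{L^2(\Gamma)} + |k|\,\|\gamma w\|_{L^2(\Gamma)} \lesssim \|\phi\|_{L^2(\Gamma)}.$$
Under Assumption \ref{ass:eta} we have $\eta = \notalpha k + \ri\notbeta$ with $\notalpha,\notbeta\in C^\infty(\Gamma)$ bounded on the compact surface $\Gamma$, so $|\eta|\lesssim |k|$ on $\Gamma$ for $|k|\geq k_0$ and therefore
$$\|\ri\eta\,\ItD\phi\|_{L^2(\Gamma)} \lesssim |k|\,\|\gamma w\|_{L^2(\Gamma)} \lesssim \|\phi\|_{L^2(\Gamma)}.$$
For the remaining term, let $v\in\Holoc(\Oe)$ be the outgoing solution of the homogeneous Helmholtz equation in $\Oe$ with Dirichlet data $\gamma_+ v = \gamma w$, so that $\DtN\ItD\phi = \dnpv$. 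Theorem \ref{thm:1} then applies, and the $L^2$ bound \eqref{eq:DtN2} combined with the displayed Corollary \ref{cor:ItD} estimate yields
$$\|\dnpv\|_{L^2(\Gamma)} \lesssim \|\nT(\gamma_+ v)\|_{L^2(\Gamma)} + |k|\,\|\gamma_+ v\|_{L^2(\Gamma)} \lesssim \|\phi\|_{L^2(\Gamma)}.$$
Summing the three contributions gives $\|\opAinv\phi\|_{L^2(\Gamma)}\lesssim\|\phi\|_{L^2(\Gamma)}$, which is \eqref{eq:Ainv_bound_main}.

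There is no real obstacle once \eqref{eq:key} is available: the proof is a direct assembly of the two sharp estimates just established. The only subtle point is that the estimates must be matched at exactly the right Sobolev regularity. The $L^2$ DtN bound \eqref{eq:DtN2} demands $H^1$-type control on the Dirichlet data with the correct $k$-weighting, and Corollary \ref{cor:ItD} supplies precisely this; conversely, attempting the same argument using the coarser $H^{1/2}$--$H^{-1/2}$ bound \eqref{eq:DtN1} together with an interpolated Corollary \ref{cor:ItD} would lose a factor of $|k|^{1/2}$ and fail to produce the uniform-in-$k$ bound \eqref{eq:Ainv_bound_main}.
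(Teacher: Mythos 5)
Your argument is correct and is essentially the same as the paper's proof: both proceed from the decomposition \eqref{eq:key}, bound the identity term trivially, use Corollary~\ref{cor:ItD} to control $\ItD : L^2(\Gamma)\to H^1_k(\Gamma)$ (hence also $L^2\to L^2$ with a gain of $|k|^{-1}$), use Theorem~\ref{thm:1} (in its $L^2$ form \eqref{eq:DtN2}) for the DtN factor, and invoke $|\eta|\lesssim|k|$ from Assumption~\ref{ass:eta} to absorb the $\ri\eta\,\ItD$ term. Your remark that using \eqref{eq:DtN1} in place of \eqref{eq:DtN2} would lose a half power of $k$ is a correct observation about why the estimates must be matched at the $H^1_k$--$L^2$ level, which is implicitly the same bookkeeping the paper does.
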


Since the proof is so short, we include it in this introduction.  The
spaces $\HokG$ used below are weighted Sobolev spaces defined in
\S\ref{sec:notation} (in particular, see equation~\eqref{eq:H1def}).
\bpf
The decomposition \eqref{eq:key} implies that
\begin{multline}
\normAinv_{\LtGt} \leq 1 + \N{\DtN}_{\HokG\rightarrow\LtG} \N{\ItD}_{\LtG\rightarrow\HokG}\\ + |\eta| \N{\ItD}_{\LtGt}.
\end{multline}
Theorem \ref{thm:1} implies that $\N{\DtN}_{\HokG\rightarrow\LtG}\lesssim 1$ and Corollary \ref{cor:ItD} implies that 
$\N{\ItD}_{\LtG\rightarrow\HokG}\lesssim 1$ (and thus $\N{\ItD}_{\LtG\rightarrow\LtG}\lesssim \abs{k}^{-1}$). These results, along with the assumption on $\eta$, immediately give \eqref{eq:Ainv_bound_main}.
\epf

We make two immediate remarks regarding Theorem \ref{thm:CFIE}.
\ben
\item The bound \eqref{eq:Ainv_bound_main} is sharp, since it was proved in \cite[Theorem 4.3]{ChGrLaLi:09} that $\normAinv_{\LtG} \geq 2$ when part of $\Gamma$ is $C^1$ and $d=2,3$. % (for any choice of $\eta. 
\item In this paper we focus on the \emph{direct} integral equation for the
  exterior Dirichlet problem, i.e., the equation where the unknown has an
  immediate physical meaning (in this case, it is the Neuman trace $\dnpu$)
  but an analogous bound to \eqref{eq:Ainv_bound_main} holds for the
  inverse of the operator involved in the standard \emph{indirect} integral
  equation (where the unknown of the integral equation does not have an
  immediate physical meaning); see, e.g., \cite[Remark 2.24,
  \S2.6]{ChGrLaSp:12}.  
\een

There have been two previous upper bounds on $\normAinv_{\LtGt}$ proved in the literature; the bound  
\beq\label{eq:Ainv_CWM}
\normAinv_{\LtGt} \lesssim 1 + \frac{k}{|\eta|}
\eeq
when $\Oi$ is a 2- or 3-d Lipschitz domain that is star-shaped with respect to a ball and $\eta\in \Rea\setminus\{0\}$
was proved in \cite[Theorem 4.3]{ChMo:08} using the Morawetz-Ludwig DtN bound and Melenk's bound on the IIP, both discussed in 
\S\ref{sec:previous}.
Furthermore, using non-sharp bounds on $\DtN$ and $\ItD$, the bound
\beq
\label{eq:Ainv_Sp}
\normAinv_{\LtGt} \lesssim k^{5/4}\left(1 + \frac{k^{3/4}}{|\eta|}\right)
\eeq
for $\eta \in \Rea\setminus\{0\}$ was proved in \cite[Theorem 1.11]{Sp:14} when either $\Oi$ is a 2- or 3-d nontrapping domain, or $\Oi$ is a nontrapping polygon.

An immediate application of the bound \eqref{eq:Ainv_bound_main} is the following. An error analysis of the $h$-boundary element method (i.e.~the Galerkin method using subspaces consisting of piecewise polynomials with fixed degree) applied to the equation \eqref{eq:CFIE} was conducted in \cite{GrLoMeSp:15}. This analysis required $\normAinv_{\LtGt}\lesssim 1$, and so covered the case when $|\eta|\sim k$ and $\Oi$ is star-shaped with respect to a ball, using the bound \eqref{eq:Ainv_CWM}. Thanks to the bound \eqref{eq:Ainv_bound_main}, however, this analysis is now valid when $\Oe$ is nontrapping and $\eta$ satisfies Assumption \ref{ass:eta}. (Note that the error analysis of the $hp$-boundary element method conducted in \cite{LoMe:11}, \cite{Me:12} only requires $\normAinv_{\LtGt}\lesssim k^\beta$ for some $\beta>0$, and thus the bound \eqref{eq:Ainv_Sp} is sufficient for this analysis to be valid for nontrapping domains.) 
%(although the use of \eqref{eq:Ainv_bound_main} improves 

The bound \eqref{eq:Ainv_bound_main}, used in conjunction with the recent
results of Gal\-kowski--Smith and Gal\-kowski \cite{GaSm:14}, \cite{HaTa:14},
on essentially the norm of $\opA$, almost completes the study of the
conditioning of $\opA$ in the high-frequency limit, i.e., the study of
\beq\label{eq:cond_def} \cond(\opA) := \normA_{\LtGt} \normAinv_{\LtGt}
\eeq for $k$ large.  This study was initiated back in the 80s for the case
when $\Oi$ is a ball \cite{Kr:85}, \cite{KrSp:83}, \cite{Am:90}, with the
main question considered being how one should choose the parameter $\eta$
to minimize the condition number. The first works to consider domains
other than balls %(and justifying the standard choice of $\eta \sim k$ for these) 
were \cite{ChGrLaLi:09}, \cite{ChMo:08}.  We discuss the
implications of Theorem \ref{thm:CFIE} and \cite{HaTa:14} on the condition
number of $\opA$ and the choice of $\eta$ in \S\ref{sec:condition}.

So far we have only discussed integral equations for the exterior Dirichlet problem. The case of the exterior Neumann problem is more subtle, and we refer the reader to \S\ref{section:Neumann}--\S\ref{sec:6-3} where this is discussed.

This subsection has discussed the application of the bounds of
Theorems \ref{thm:1}--\ref{thm:3} to boundary integral equations for
real $k$. 

In a different direction, the pole-free strip
for the IIP in Theorem \ref{thm:pole} has the following two applications in the theory of boundary integral equations. 
\ben
\item This result is used in \cite{EpGrHa:15}, along with results from
classical scattering theory and effectively the relation
\eqref{eq:key}, to show that $\opA$ is
invertible for $\Im k> -\delta$, for some $\delta>0$, when $\Oe$ is
nontrapping and $\eta$ satisfies Assumption \ref{ass:eta2}. 
\item The method of \cite{ZhBa:15} for finding Dirichlet eigenvalues of the Laplacian using boundary integral equations relies on the existence of a pole-free strip for both the interior and exterior impedance problems (see \cite[Remark 7.5]{ZhBa:15}). The former is guaranteed by Theorem \ref{thm:pole}, and the latter is guaranteed by \cite{Al:02}.
\een

%In a different direction, we note that the pole-free strip
%for the IIP in Theorem \ref{thm:pole}, along with results from
%classical scattering theory and effectively the relation
%\eqref{eq:key}, is used in \cite{EpGrHa:15} to show that $\opA$ is
%invertible for $\Im k> -\delta$, for some $\delta>0$, when $\Oe$ is
%nontrapping and $\eta$ satisfies Assumption \ref{ass:eta2}. We also
%note that the conjectured existence of an analogous pole-free strip for the exterior impedance problem lies behind the method of \cite{ZhBa:15} for finding Dirichlet eigenvalues of the Laplacian using boundary integral equations.

\section{Notation and preliminaries}\label{sec:notation}
Let $\Oi\subset \Rea^d, \,d\geq 2,$ be a bounded, Lipschitz open set with boundary $\Gamma:=\partial \Oi$,
such that the open complement $\Oe:= \Rea^d \setminus \overline{\Oi}$ is connected.
We denote the exterior and interior traces by $\gamma_\pm$, and the exterior and interior normal-derivative traces by  $\partial_n^{\pm}$.
The symbol $\chi$ will denote a function in $C^\infty_{c}(\Oe)$
that equals one in a neighborhood of $\Oi$.  Additional assumptions about the support of particular cutoffs will be stated explicitly.

The symbol $\Delta$ denotes the (nonpositive) Laplacian and $\Box$
denotes the wave operator $\pd[t]^{2} - \Delta$.

Given a function $u\in C^1(\Rea^d\setminus \overline{B_{R_0}})$ for some $R_0>0$ and given $\lambda \in \CC,$ we say that
$u$ satisfies the Sommerfeld radiation condition with spectral parameter $\lambda$ if
\beq\label{eq:src2}
\pdiff{u}{r}
- \ri \lambda u
= o\left(\frac{1}{r^{(d-1)/2}}\right) 
\eeq
as $r:=|\bx| \tendi$, uniformly in $\hat{\bx}:=\bx/r$.

We define the weighted norm
\begin{equation}\label{eq:H1def}
\norm{u}^2_{H^1_k(X)} :=\norm{\nabla u}^2_{L^2(X)}+k^2\norm{u}^2_{L^2(X)}.
\end{equation}
(we use this notation with $X$ either $\Oe,$ $\Oi,$ or $\Gamma$; in the latter case the gradient is to be understood as the surface gradient $\nT$).

More generally, for $s\in \RR$ we let $H^{s}_k(X)$ denote the weighted Sobolev space obtained by interpolation
and duality from the spaces of positive integer order
$$
H^m_k(X)  = \big\{u\in L^2(X): \abs{k}^{m-\smallabs{\alpha}} D^\alpha u \in
L^2(X),\ \text{for all } \abs{\alpha} \leq m\big\}.
$$
As usual (see e.g.\ \cite[\S4.4]{Ta:96}) we may identify these spaces
on manifolds with boundary with the
quotient space
$$
H^s_k(\Omega_\pm) = \big\{u \in H^s_k(\RR^n)\big\}/\big\{u: u\rvert_{\Omega_\pm}=0\big\}.
$$

An easy interpolation (see, e.g., \cite{ChHeMo:14}) shows that an equivalent norm on $H^s_k(X)$ for $s>0$
is $\norm{\bullet}_{H^s}+ \abs{k}^s \norm{\bullet}_{L^2},$ and we will use this
fact freely below.

We will also have occasion to consider domain of the self-adjoint operator
$(-\Lap+k^2)^{s/2},$ with $\Lap$ denoting the (nonpositive) Laplacian with
Neumann or Dirichlet boundary conditions and $s\geq 0.$ We let
$\dom^s_{N,k}$ resp.\ $\dom^s_{D,k}$ denote these respective domains; for
negative $s$ the spaces are defined by duality: $\dom^s_{\bullet,k}=(\dom^{-s}_{\bullet,k})^*.$ As in
\cite[\S5.A]{Ta:96}, we note that $\dom^1_N(\Omega_\pm)=H^1_k(\Omega)$
and so by interpolation we have
\begin{equation}\label{domaintosobolev}
H^s_k(\Omega_\pm) = \dom^s_N(\Omega_\pm),\ s \in [0,1].
\end{equation}

The norm with no subscript attached, $\smallnorm{\bullet},$
will denote the $L^2$ norm throughout.

The following lemma connects Sobolev regularity in space-time to weighted
Sobolev regularity following Fourier transform.  Let $\mathcal{F}^{-1}$ denote
the inverse Fourier transform taking the time variable to frequency variable $k.$
\begin{lemma}\label{lemma:FTSobolev}
Let $I\subset \RR$ be a bounded open interval.
There exist $C_I$ such that 
$$
\norm{\mathcal{F}_{t\to k}^{-1} u(k,x) }_{H^\alpha_k(X)} \leq C_I
\norm{u}_{H^\alpha(I\times X)}
$$
for every $u \in H^\alpha(\RR\times X)$ supported in $I \times X.$
\end{lemma}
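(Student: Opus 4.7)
The plan is to prove the estimate pointwise in $k$, first for nonnegative integer $\alpha$ by direct Fourier-transform computation, and then extend to general $\alpha$ by interpolation and duality. First I would reduce to the case $X=\RR^n$ via the quotient description of $H^s_k(\Omega_\pm)$ recorded just before \eqref{domaintosobolev}, and extend $u$ by zero to all of $\RR\times \RR^n$, so that
\begin{equation*}
\hat u(k,x):= \mathcal{F}^{-1}_{t\to k} u(k,x) = \int_I e^{ikt} u(t,x)\,dt
\end{equation*}
is well-defined and the problem reduces to Euclidean space.

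For $\alpha=m\in\mathbb{N}\cup\{0\}$ the main identity is
\begin{equation*}
\norm{\hat u(k,\cdot)}_{H^m_k(X)}^2
= \sum_{\abs{\gamma}\leq m} \abs{k}^{2(m-\abs{\gamma})} \norm{D^\gamma_x \hat u(k,\cdot)}_{L^2(X)}^2
= \sum_{\abs{\gamma}\leq m} \norm{\mathcal{F}^{-1}_{t\to k}\!\left(\partial_t^{m-\abs{\gamma}} D^\gamma_x u\right)(k,\cdot)}_{L^2(X)}^2,
\end{equation*}
since $\mathcal{F}^{-1}_{t\to k}$ converts the weight $\abs{k}^{m-\abs{\gamma}}$ into a $t$-derivative of the same order (up to a unimodular factor). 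Because $u$ and all of its derivatives are supported in $I\times X$, Cauchy--Schwarz yields the pointwise-in-$k$ estimate
\begin{equation*}
\abs{\mathcal{F}^{-1}_{t\to k} v(k,x)}^2 \leq \abs{I}\int_I \abs{v(t,x)}^2\,dt
\end{equation*}
for any $v$ supported in $I\times X$. Applying this to $v=\partial_t^{m-\abs{\gamma}}D^\gamma_x u$, integrating in $x$, and summing in $\gamma$ gives the bound with constant $C_I = \abs{I}^{1/2}$, uniformly in $k\in\RR$.

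For general $\alpha\geq 0$ I would interpolate between consecutive integers, using the characterization of $H^\alpha_k$ as the interpolation space between $L^2$ and $H^m_k$; this is consistent with the equivalent norm $\norm{\cdot}_{H^\alpha}+\abs{k}^\alpha\norm{\cdot}_{L^2}$ noted just above the lemma. For $\alpha<0$ the estimate follows by duality against $H^{-\alpha}$, since the $L^2$ pairing intertwines $\mathcal{F}^{-1}_{t\to k}$ with its adjoint and both sides of the inequality dualize symmetrically.

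The step I expect to require the most care is the bookkeeping for the interpolation/duality argument, namely checking that the $k$-weights are transported correctly between the two scales of Sobolev norms on a domain with boundary, rather than any analytic difficulty; the pointwise bound itself is essentially immediate from compact support in $t$, which is precisely what converts a space-time $L^2$ norm into a $k$-uniform bound under Fourier transform.
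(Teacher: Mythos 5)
Your proof is correct and follows essentially the same route as the paper: convert the $k$-weights to $t$-derivatives via the Fourier transform, use compact support in $t$ together with Cauchy--Schwarz to get a $k$-uniform bound at integer order, then interpolate and dualize. The only cosmetic difference is that the paper phrases the integer-order step by intertwining the elliptic operator $(\partial_t^2+\Lap)$ with the Fourier transform, whereas you decompose the $H^m_k$ norm term by term into mixed derivatives $\partial_t^{m-|\gamma|}D_x^\gamma$; these are equivalent presentations of the same computation.
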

The proof is simply intertwining the elliptic operator
$(\pa_t^2+\Lap)$ with the Fourier transform to obtain the result for
$\alpha \in \NN,$ followed by interpolation and duality for the
general case.

\subsection{Preparatory results for proving Theorems \ref{thm:1} and \ref{thm:2} (the DtN and NtD bounds)}\label{sec:2-1}

The following interpolation result (which appears as \cite[Lemma 2.3]{Sp:14}) shows that the DtN bound \eqref{eq:DtN1} follows from \eqref{eq:DtN2}, and the NtD bound \eqref{eq:NtD1} follows from \eqref{eq:NtD2}.
To state this result, we denote the DtN map in $\Oe$ by $\DtN$ and the NtD map by $\NtD$ (following the notation in \cite[\S2.7]{ChGrLaSp:12}). $\DtN$ is defined as a map from $\HhG$ to $\HmhG$ by standard results about the solvability of the exterior Dirichlet problem and the definition of the normal derivative, and the regularity result of Ne\v{c}as stated as Lemma \ref{lem:Necas} below implies that $\DtN$ can be extended to a map from $\HoG$ to $\LtG$.
%, and then a representation of $\DtN$ in terms of boundary-integral operators means that $\DtN$ can be extended to a map from $H^{s+1/2}(\Gamma)$ to $H^{s-1/2}(\Gamma)$ for $|s|\leq 1/2$ (see \cite[Theorem 2.31]{ChGrLaSp:12}). 
Analogous arguments hold for $\NtD$.

\ble\textbf{\emph{(}\cite[Lemma 2.3]{Sp:14}\emph{)}}\label{lem:inter}
With $\Oe$, $\DtN$, and $\NtD$ defined above,
\beqs
\N{\DtN}_{\HhG \rightarrow \HmhG} \leq \N{\DtN}_{\HoG \rightarrow \LtG}
\eeqs
and analogously,
\beqs
\N{\NtD}_{\HmhG \rightarrow \HhG} \leq \N{\NtD}_{\LtG \rightarrow \HoG}.
\eeqs
\ele

(Note that an analogous result holds for the interior impedance-to-Dirichlet map, and thus the bound in Corollary \ref{cor:ItD} implies a bound on this map from $\HmhG$ to $\HhG$, but we do not need this latter result in this paper.)

Having reduced the problem of obtaining the DtN and NtD bounds in
Theorems \ref{thm:1} and \ref{thm:2} to the problem of obtained the
bounds between the spaces $\HoG$ and $\LtG$, we now use the well-known
fact that a Rellich-type identity can be used to bound the (highest
order terms of the) DtN and NtD maps, modulo
terms in the domain. The next lemma is a restatement of Ne\v{c}as'
result for strongly elliptic systems (see \cite[\S5.1.2,
5.2.1]{Ne:67}, \cite[Theorem 4.24]{Mc:00}) applied to the specific
case of the Helmholtz equation, where we have kept track of the
dependence of each term on $k$ (see \cite[Lemma 3.5]{Sp:14} for
details).

\ble[DtN and NtD bounds in $H^1(\Gamma)$--$\LtG$ modulo terms in the domain]
\label{lem:Necas}
With $\Oe$ and $\chi$ as above, given $f\in L^2_{\rm comp}(\Oe)$, let $u \in \Holoce(\Oe)$ be a solution to $\Delta u +k^2 u =-f$.

(i) If $\gamma_+ u \in \Ho{\Gamma}$ then $\dnpu \in \Lt{\Gamma}$ and
\beq\label{eq:41}
\N{\dnpu}^2_{\LtG} \lesssim \N{\nT (\gamma_+ u)}^2_{\LtG} + \N{\chi u}^2_{\HokOe}+ \N{f}^2_{\Lt{\Oe}}.
%\N{\gu}^2_{\Lt{\OR}} + k^2 \N{u}^2_{\Lt{\OR}} + \N{f}^2_{\Lt{\Oe}}.
\eeq

(ii) If $\dnpu \in \Lt{\Gamma}$ then $\gamma_+ u \in \Ho{\Gamma}$ and
\beq\label{eq:42}
\N{\nT (\gamma_+ u)}^2_{\LtG}  \lesssim \N{\dnpu}^2_{\LtG} + \abs{k}^2 \N{\gamma_+ u}^2_{\LtG}
+ \N{\chi u}^2_{\HokOe}+ \N{f}^2_{\Lt{\Oe}}.
%\N{\gu}^2_{\Lt{\OR}} + k^2 \N{u}^2_{\Lt{\OR}} + \N{f}^2_{\Lt{\Oe}}.
\eeq
\ele

Therefore, to prove the bounds in Theorem \ref{thm:1} it is sufficient to prove that, if $u\in \Holoc(\Oe)$ is the solution to the exterior Dirichlet problem for the homogeneous Helmholtz equation, with $H^1$-Dirichlet boundary data $g_D$, then %, given a cut-off function $\chi$, 
\beqs
\N{\chi u}_{\HokOe} \lesssim \N{\gamma_+ u}_{\HokG}.
\eeqs
Similarly, to prove the bounds in Theorem \ref{thm:2} it is sufficient to prove that, 
$u\in \Holoc(\Oe)$ is the solution to the exterior Neumann problem for
the homogeneous Helmholtz equation, with $L^2$-Neumann boundary data
$g_N$, then with $\beta$ as in Theorem~\ref{thm:2},
\begin{align*}%\label{eq:21}
\N{\gamma_+ u}_{\LtG} \lesssim k^{-\beta}\N{\dnpu}_{\LtG} \quad\tand\quad
\N{\chi u}_{\HokOe} \lesssim k^{1-\beta}\N{\dnpu}_{\LtG}%\label{eq:22}
\end{align*}
(we will actually prove the stronger result that the second bound holds with a smaller power of $k$ on the RHS, but this will not affect the bound on the NtD map).
The asymmetry between what we need to prove for the Neumann problem versus what we need to prove for the Dirichlet problem is due to the fact that only the $H^1$-semi norm of the Dirichlet trace is controlled in \eqref{eq:42}, which is due to the structure of the Rellich identity (see, e.g., \cite[Equation 3.13]{Sp:14}).

Finally, in our proof of the NtD estimates we will need the following lemma. It is perhaps easiest to state this in terms of norms of $u$ over 
$\OR := \Oe \cap B_R$, where $B_R:=\{\bx: |\bx|< R\}$, but the result could be translated into norms of $\chi u$ over $\Oe$ for appropriate cut-off functions $\chi$.

\begin{lemma}[Bounding the $H^1$ norm via the $L^2$ norm and the data]\label{lem:H1L2}
Given $f\in L^2_{\comp}(\Oe)$, let $u \in \Holoce(\Oe)$ be a solution of 
the Helmholtz equation $\Delta u +k^2 u= -f$ in $\Oe$. Then, given
$R>\sup_{x\in\Oi}|x|$,
\beqs%\label{eq:control}
\N{\gu}_{\Lt{\OR}}^2 \lesssim \ang{k}^2 \N{u}^2_{\Lt{\Omega_{R+1}}} + \ang{k}^{-2}\N{f}^2_{\Lt{\Oe}}+ \N{\gamma_+ u}_{\LtG} \N{\dnpu}_{\LtG}
%\left| \int_\Gamma \overline{\gamma u}\, \dnu \, \rd s\right|
\eeqs
for all $k\in\Rea$.  
%If $\eta$ or $-\overline{\eta}$ satisfies the
%stronger hypothesis Assumption~\ref{ass:eta2} then the result %\eqref{eq:control}
%holds for all $k \in \RR.$
\ele

This result when 
one of $\gamma u$ and $\dnu$ is zero is proved in \cite[Lemma
2.2]{Sp:14}; a similar result appears in \cite[Lemma 1]{Mo:75}.

\section{Exterior Dirichlet-to-Neumann estimates}\label{sec:DtN} 

In this section we prove Theorem \ref{thm:1}, i.e., a bound on the exterior Dirichlet-to-Neumann map for solutions of the
Helmholtz equation satisfying the Sommerfeld radiation condition.  

The methods used here will be completely in the setting of stationary
scattering theory, i.e., we will never have recourse to energy estimates
for solutions to the wave equation (which is, of course, connected via
Fourier transform).  The energy estimates that we present are more widely
known in this latter setting, however---cf.\ H\"ormander
\cite[\S24.1]{Ho:85} as well as the more general estimates of Kreiss and
Sakamoto in the context of general hyperbolic systems with a boundary
condition satisfying the uniform Lopatinski condition \cite{Kr:70},
\cite{Sa:70}, \cite{Sa:70a}. (In contrast, when dealing with the Neumann-to-Dirichlet
operator below, we need to use results known only in the wave equation
setting.)  

More specifically, the method we use to prove Theorem \ref{thm:1} consists of a ``gluing" argument, where 
outgoing solutions for the far-field are ``glued" to solutions of an ``auxiliary problem" in a bounded region; this type of argument goes back at least to Lax and Phillips \cite[\S5]{LaPh:73}. 
In our situation, estimates for the DtN map for a lower-order ``perturbation" of the Helmholtz equation are used in conjunction with the resolvent estimate for the problem with homogeneous boundary conditions. This argument was first used to obtain bounds on the DtN map in \cite{LaVa:12}, and later refined in \cite{Sp:14}. Both these previous works use the equation $\Delta w -k^2 w=0$ as the lower-order perturbation, and obtain non-sharp bounds on the Helmholtz DtN map. Here we use the equation $\Delta w + (k^2 + \ri |k|)w=0$ as the lower-order perturbation (i.e., the Helmholtz equation with some absorption/damping), and this change is sufficient to prove the sharp result.

%More specifically, the method we use to prove Theorem \ref{thm:1} consists of a ``gluing" argument, where estimates for the DtN map for a lower-order ``perturbation" of the Helmholtz equation are used in conjunction with the resolvent estimate for the problem with homogeneous boundary conditions. This argument was first used in \cite{LaVa:12}, and later refined in \cite{Sp:14}. Both these previous works using the equation $\Delta w -k^2 w=0$ as the lower-order perturbation, and obtained non-sharp bounds on the Helmholtz DtN map. Here we use the equation $\Delta w + (k^2 + \ri |k|)w=0$ as the lower-order perturbation, and this change is sufficient to prove the sharp result.

Before we begin, it is helpful to recall the following resolvent estimates for the Dirichlet problem
(all but one of which hold for the Neumann problem as well).

\begin{theorem}[Resolvent estimates]\label{thm:resolve}
Let $f\in \Lt{\Oe}$ have compact support, and let $u\in \Holoce(\Oe)$ 
be a solution to the Helmholtz equation $\Delta u +k^2 u= -f$ in $\Oe$ that satisfies the Sommerfeld radiation condition \eqref{eq:src} (with $\la=k$) and the boundary condition $\gamma_+ u=0$.
If \emph{either}
\bit
\item[(a)] $\Oe$ is a 2- or 3-d nontrapping domain (in the sense of Definition \ref{def:nt1}) \emph{or} 
\item[(b)] $\Oi$ is a nontrapping polygon (in the sense of Definition \ref{def:nt2}), \emph{or}
\item[(c)] $\Oi$ is a 2- or 3-d Lipschitz domain that is star-shaped (in the sense of Definition \ref{def:star}(i)) 
\eit
then, given $k_0>0$, 
\beq\label{eq:resolvent}
\N{\chi u}_{\HokOe}
\lesssim \N{f}_{\Lt{\Oe}}
\eeq
for all $\abs{k}\geq k_0$.
\end{theorem}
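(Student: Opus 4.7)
The plan is to handle the three cases (a), (b), (c) separately, since each rests on distinct techniques, and in every case to reduce to results that are by now standard in the literature. The unifying theme is: a nontrapping hypothesis on the billiard flow yields decay of solutions to the Dirichlet wave equation, and Fourier transform converts this into the cutoff resolvent bound of order $|k|^{-1}$ on the outgoing solution of the Helmholtz equation.

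For case (c), the star-shaped Lipschitz case, I would use the Morawetz--Rellich multiplier method. Multiply $(\Delta+k^2)u=-f$ by $\overline{Mu}$ with $Mu:=x\cdot\nabla u+((d-1)/2)u$, integrate by parts over $\Omega_+\cap B_R$, and examine the real and imaginary parts. The boundary integrand on $\Gamma$ reduces, in view of $\gamma_+u=0$, to a multiple of $(x\cdot n)|\partial_n u|^2\geq 0$, so it can be discarded thanks to the star-shaped hypothesis; the boundary term on $\partial B_R$ vanishes as $R\to\infty$ by the Sommerfeld radiation condition; and the volume term containing $f$ is controlled by Cauchy--Schwarz and absorbed on the left. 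This is essentially the Morawetz--Ludwig argument \cite{MoLu:68}, with the extension to Lipschitz $\Gamma$ requiring the smooth-approximation technology documented in \cite[App.~A]{MoSp:14}.

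For cases (a) and (b), I would invoke the Vainberg parametrix construction \cite{Va:75} as outlined in the introduction. Its input is the \emph{weak Huygens principle} for the Dirichlet wave equation on $\Omega_+$: compactly supported data become smooth on compact sets after a finite time $T(R)$. In case (a), this is the propagation of singularities result of Melrose--Sj\"ostrand \cite{Me:75, Ta:76, MeSj:82}, controlling glancing and gliding rays at a smooth boundary; combined with the nontrapping hypothesis (Definition \ref{def:nt1}), it yields the needed decay. In case (b), the billiard trajectories may encounter the vertices, and one must replace pure propagation of singularities with the diffractive propagation result of Baskin--Wunsch \cite{BaWu:13}; the nontrapping hypothesis for polygons (Definition \ref{def:nt2}) is tailored to match the hypotheses of that theorem. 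In either case, Vainberg's construction converts the decay of localized wave energy into the cutoff resolvent bound \eqref{eq:resolvent}, uniform in $|k|\geq k_0$.

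The main obstacle is technical rather than conceptual: in case (b) the Baskin--Wunsch estimate is formulated in a slightly different language and must be unwound to produce the precise cutoff-resolvent formulation stated here; in case (c) the Morawetz identity itself is classical, but its justification for merely Lipschitz boundaries requires careful density arguments. Neither is serious, and, consistent with the spirit of the paper (which uses \eqref{eq:resolvent} as a black box), I would present the proof as three short deductions, each offloading the real work to the appropriate reference.
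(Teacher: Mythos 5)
Your proposal is correct and follows essentially the same route as the paper, which simply cites \cite[Theorem 7]{Va:75} together with Melrose--Sj\"ostrand propagation of singularities for case (a), \cite[Corollary 3]{BaWu:13} for case (b), and \cite[Lemma 3.8]{ChMo:08} (itself a Morawetz--Ludwig multiplier argument) for case (c). The only cosmetic difference is that you sketch the Morawetz identity directly with references to \cite{MoLu:68} and \cite{MoSp:14} rather than citing the packaged statement in \cite{ChMo:08}.
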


\begin{proof}
The result for Part (a) is proved in \cite[Theorem
7]{Va:75} using the
propagation of singularities results of \cite{MeSj:78},
\cite{MeSj:82}.   (See also Vainberg's book \cite{Va:89} for a broader
survey of these methods.)
The result for Part (b) was
proved when $\Oi$ is a nontrapping polygon in \cite[Corollary 3]{BaWu:13}.
The bound \eqref{eq:resolvent} was proved when $\Oi$ is a star-shaped domain in 2- or 3-d in \cite[Lemma 3.8]{ChMo:08}. 
\epf

\begin{lemma}
If $w$ satisfies
\beq\label{eq:PDEw}
\Delta w +(k^2 + \ri \abs{k}) w = 0 \quad\text{ in }\Oe
\eeq
and the Sommerfeld radiation condition \eqref{eq:src} with spectral parameter $\sqrt{k^2+\ri \abs{k}}$, 
then, given $k_0>0$, 
\beq\label{eq:1}
\N{w}_\HokOe^2\lesssim \abs{k} \N{\gamma_+ w}_{\LtG} \N{\dnpw}_{\LtG}.
\eeq
for all $\abs{k}\geq k_0$.
\end{lemma}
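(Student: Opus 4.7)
The plan is to prove the lemma by a direct Green's identity argument on $\Omega_+\cap B_R$, exploiting the fact that the absorption term $\ri|k|$ forces $w$ to decay exponentially at infinity, so that the boundary term at $\partial B_R$ drops out in the limit $R\to\infty$.

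First, let $\lambda = \sqrt{k^2 + \ri|k|}$ with the branch chosen so that $\Im\lambda > 0.$ The Sommerfeld radiation condition $\pd[r] w - \ri\lambda w = o(r^{-(d-1)/2})$ together with $\Im\lambda>0$ forces $w$ to decay like $e^{-(\Im\lambda) r}/r^{(d-1)/2}$ at infinity (a standard consequence of representing $w$ by its far-field expansion or by writing it as a combination of outgoing Hankel functions after reduction to $\Delta+\lambda^2$), with a matching decay for $\nabla w$. Consequently $w\in H^1(\Omega_+)$ and the boundary integral on $\partial B_R$ below tends to zero.

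Next, I would apply Green's first identity on the truncated domain $\Omega_+\cap B_R$ to $\bar w$ and $w,$ using $\Delta w = -(k^2+\ri|k|)w$: denoting the outward normal derivative of $\Omega_+\cap B_R$ on $\Gamma$ by $-\pd[n]^+$ (since it points into $\Omega_-$) and on $\partial B_R$ by $\pd[r]$, one obtains
\beq\label{eq:greenplan}
\int_{\Omega_+\cap B_R}\abs{\nabla w}^2 - (k^2+\ri|k|)\int_{\Omega_+\cap B_R}\abs{w}^2 = -\int_\Gamma \bar w\,\pd[n]^+ w \,dS + \int_{\partial B_R}\bar w\,\pd[r] w\,dS.
\eeq
By the exponential decay of $w$ and $\nabla w,$ the $\partial B_R$ term vanishes as $R\to\infty.$ Taking the imaginary part of the resulting identity on all of $\Omega_+$ yields
\beqs
\abs{k}\,\N{w}_{L^2(\Omega_+)}^2 = -\Im\int_\Gamma \bar w\,\pd[n]^+ w\,dS \leq \N{\gamma_+ w}_{\LtG}\N{\dnpw}_{\LtG},
\eeqs
which after multiplying by $\abs{k}$ gives $k^2\N{w}_{L^2(\Omega_+)}^2 \leq \abs{k}\N{\gamma_+ w}_{\LtG}\N{\dnpw}_{\LtG}.$

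Finally, I would take the real part of the same identity to obtain
\beqs
\N{\nabla w}_{L^2(\Omega_+)}^2 = k^2\N{w}_{L^2(\Omega_+)}^2 - \Re\int_\Gamma \bar w\,\pd[n]^+ w\,dS \leq k^2\N{w}_{L^2(\Omega_+)}^2 + \N{\gamma_+ w}_{\LtG}\N{\dnpw}_{\LtG},
\eeqs
and then combine with the previous bound to get $\N{w}_{\HokOe}^2 \leq (2\abs{k}+1)\N{\gamma_+ w}_{\LtG}\N{\dnpw}_{\LtG},$ which gives the desired estimate \eqref{eq:1} for $\abs{k}\geq k_0.$ The main technical point to verify carefully is the vanishing of the $\partial B_R$ boundary term, i.e., the exponential decay of $w$ at infinity induced by $\Im\lambda>0$; once that is in hand the argument is purely algebraic manipulation of Green's identity.
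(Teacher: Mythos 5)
Your proof is correct and follows essentially the same route as the paper's: note that $\Im\sqrt{k^2+\ri|k|}$ is bounded below by a positive constant so $w$ decays exponentially, apply Green's identity (the paper states it directly on $\Omega_+$, you run the truncation on $\Omega_+\cap B_R$ and let $R\to\infty$, but this is the same computation), then take imaginary and real parts. The only blemish is a harmless sign slip in the imaginary-part step: from your displayed identity one gets $|k|\N{w}^2_{L^2(\Omega_+)} = +\Im\int_\Gamma \bar w\,\partial_n^+ w$, not $-\Im\int_\Gamma \bar w\,\partial_n^+ w$, but Cauchy--Schwarz bounds the modulus either way so the conclusion stands.
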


\bpf
Given $k_0>0$, there exists a $c>0$ such that $\Im \sqrt{k^2 + \ri \abs{k} } \geq c$; therefore, since $w$ satisfies the Sommerfeld radiation condition and the associated asymptotic expansion (see, e.g., \cite[Theorem 3.6]{CoKr:83}), $w$ decays exponentially at infinity; hence both $w$ and $\nabla w$ are both in $L^2(\Oe)$.

We can therefore apply Green's identity (i.e., multiply the PDE \eqref{eq:PDEw} by $\overline{w}$ and integrate by parts), and obtain that
\beqs
-\int_\Gamma \overline{\gamma_ +w}\,\dnpw + \int_\Oe (k^2 + \ri \abs{k}) |w|^2 -|\nabla w|^2 =0.
\eeqs
Taking the imaginary part of this last expression and using the Cauchy-Schwarz inequality yields
\beq\label{eq:A}
\abs{k}\N{w}^2_{L^2(\Oe)} \leq \N{\gamma_+ w}_{\LtG} \N{\dnpw}_{\LtG}.
\eeq
Taking the real part yields 
\beq\label{eq:B}
\N{\nabla w}_{L^2(\Oe)}^2 \leq k^2 \N{w}^2_{L^2(\Oe)} + \N{\gamma_+ w}_{\LtG} \N{\dnpw}_{\LtG},
\eeq
and combining \eqref{eq:A} and \eqref{eq:B} yields the result \eqref{eq:1}.
\epf

\ble[Bound on the exterior Dirichlet problem with damping]\label{lem:damp}

\

\noi Given $g_D \in H^1(\Gamma)$, let $w$ be the solution of
\beqs%\label{eq:PDEw1}
\Delta w +(k^2 + \ri \abs{k}) w = 0 \quad\text{ in }\Oe, \quad\gamma_+ w =g_D \quad\ton \Gamma,
\eeqs
satisfying the Sommerfeld radiation condition \eqref{eq:src} (note
that the existence of a unique solution to this problem follows from
Remark \ref{rem:damp} below). Then
\beq\label{eq:Dbound_damp}
\N{w}_{\HokOe} \lesssim \N{g_D }_{\HokG}.
\eeq
% (ii) Given $g_N \in \LtG$, let $w$ be the solution of
% \beqs%\label{eq:PDEw1}
% \Delta w +(k^2 + \ri k) w = 0 \quad\text{ in }\Oe, \quad\dnpw =g_N,
% \eeqs
% satisfying the Sommerfeld radiation condition \eqref{eq:src}. Then
% \beq\label{eq:Nbound_damp}
% \N{w}_{\HokOe} \lesssim k^{1/2} \N{g_N}_{\LtG} \textbf{ * not sharp (expect $k$ out) * }
% \eeq
\ele

\bre[Existence of outgoing solutions to the Dirichlet problem with damping]\label{rem:damp}

\

\noi If $w$ satisfies $\Delta w +(k^2 + \ri \abs{k}) w = 0$, then $w$ satisfies the Helmholtz equation $\Delta w +\la^2 w=0$ with $\la = \sqrt{k^2+\ri \abs{k}}$. Since $\Im \la>0$, %uniqueness of the solutions to the Dirichlet and Neumman problems follows from Colton and Kress 1983 Theorem 3.10 (just using Green's identity)
the existence of outgoing solutions (i.e. solutions satisfying the
Sommerfeld radiation condition \eqref{eq:src}) to the Dirichlet and
Neumann problems for this equation follows in the same way as in the
case $\Im \la =0$. Indeed uniqueness is proved for $\Im \la \geq 0$ in
\cite[Theorem 3.13]{CoKr:83}. Existence in the case $\Im \la=0$ is proved using integral equation results in \cite[Corollary 2.28]{ChGrLaSp:12} (see also \cite[Theorem 2.10]{ChGrLaSp:12}), but the proof goes through in the exactly the same way when $\Im\la>0$.
\ere

\bpf[Proof of Lemma \ref{lem:damp}]
(i) Using the bound \eqref{eq:1} in the Ne\v{c}as result \eqref{eq:41} (with $w=u, f= \ri |k| w$) we find that
\beqs
\N{\dnpw}^2_{\LtG} \lesssim \N{\nT (\gamma_+ w)}^2_{\LtG} + \abs{k} \N{\gamma_+ w}_{\LtG} \N{\dnpw}_{\LtG}.
\eeqs
and so, absorbing the Neumann data term on the LHS
we have
\beqs
\N{\dnpw}_{\LtG} \lesssim \N{\nT (\gamma_+ w)}_{\LtG} + \abs{k} \N{\gamma_+ w}_{\LtG}. 
\eeqs
Using this last expression in \eqref{eq:1}, we obtain \eqref{eq:Dbound_damp}.
%(ii) Combining the multiplicative trace inequality
%\beq\label{eq:multtrace}
%\N{\gamma_+w}^2_{\LtG} \lesssim \N{w}_{\Lt{\OR}}\N{w}_{\Ho{\OR}}
%\eeq
%(for any $R>\sup_{\bx \in \Oi}|\bx|$)
%with \eqref{eq:1} gives 
%\beqs
%k\N{\gamma_+ w}^2_{\LtG} \lesssim \N{w}^2_{\HokOe} \lesssim k \N{\gamma_+ w}_{\LtG} \N{\dnpw}_{\LtG},
%\eeqs
%so that
%\beqs
%\N{\gamma_+ w}_{\LtG} \lesssim \N{\dnpw}_{\LtG} 
%\eeqs
%Using this last expression in \eqref{eq:1} gives \eqref{eq:Nbound_damp}.
\epf

\begin{theorem}[Bounds on solutions of the Helmholtz Dirichlet problem]\label{thm:damp}

\

\noi Given $g_D \in H^1(\Gamma)$, let $u$ be the solution of
\beq\label{eq:Dprob}
\Delta u +k^2 u = 0 \quad\text{ in }\Oe, \quad\gamma_+ u =g_D,
\eeq
satisfying the Sommerfeld radiation condition \eqref{eq:src} (with $\la=k$). 
If $\Oe$ satisfies one of the conditions (a), (b), and (c) in Theorem \ref{thm:resolve} then
\beq\label{eq:Dbound}
\N{\chi u}_{\HokOe} \lesssim \N{g_D}_{\HokG}.
\eeq
% (ii) Given $g_N \in \LtG$, let $u$ be the solution of
% \beq\label{eq:Nprob}
% \Delta u +k^2u = 0 \quad\text{ in }\Oe, \quad\dnpu =g_N,
% \eeq
% satisfying the Sommerfeld radiation condition \eqref{eq:src}. Then
% \beq\label{eq:Nbound}
% \N{u}_{\HokOe} \lesssim k^{1/2} \N{g_N }_{\LtG} \textbf{ * not sharp (expect $k$ out) * }
% \eeq
\end{theorem}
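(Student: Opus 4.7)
The plan is to carry out the ``gluing'' argument outlined at the start of the section: the damped problem handles the outgoing/radiation part with the correct $k$-scaling, and the nontrapping resolvent estimate of Theorem \ref{thm:resolve} then absorbs the error left behind. Concretely, fix a cutoff $\psi \in C^\infty_c(\mathbb{R}^d)$ which equals $1$ on a neighborhood of $\overline{\Oi} \cup \supp \chi$, and let $w$ be the outgoing solution of the damped problem
\[
\Delta w + (k^2 + \ri\abs{k}) w = 0 \text{ in }\Oe, \qquad \gamma_+ w = g_D \text{ on }\Gamma,
\]
whose existence is guaranteed by Remark \ref{rem:damp}. Define $v := u - \psi w$. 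Then $\gamma_+ v = 0$ (since $\psi \equiv 1$ near $\Gamma$), and a short computation using $(\Delta+k^2) w = -\ri\abs{k} w$ gives
\[
(\Delta + k^2) v = \ri\abs{k}\psi w - [\Delta,\psi] w =: -f,
\]
where $f$ is compactly supported in $\supp \psi$. Since $\psi w$ is compactly supported, $v$ agrees with $u$ outside a compact set and hence satisfies the Sommerfeld radiation condition \eqref{eq:src} with spectral parameter $k$.

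Next I would apply Theorem \ref{thm:resolve} to $v$, which is legitimate under any of the hypotheses (a), (b), (c), to obtain
\[
\N{\chi v}_{\HokOe} \lesssim \N{f}_{\Lt{\Oe}}.
\]
It remains to control $\N{f}_{\LtO}$. The commutator $[\Delta,\psi]$ is a first-order operator with smooth, compactly supported coefficients, so $\N{[\Delta,\psi] w}_{\LtO} \lesssim \N{w}_{H^1(\supp \nabla \psi)}\lesssim \N{w}_{\HokOe}$, while $\N{\abs{k}\psi w}_{\LtO}\leq \abs{k}\N{w}_{\LtO} \leq \N{w}_{\HokOe}$. Thus $\N{f}_{\LtO} \lesssim \N{w}_{\HokOe}$, and the damped bound of Lemma \ref{lem:damp} gives $\N{w}_{\HokOe}\lesssim \N{g_D}_{\HokG}$.

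Finally I would reassemble: $\chi u = \chi v + \chi \psi w$, and since $\chi\psi$ is a fixed smooth compactly supported cutoff, $\N{\chi \psi w}_{\HokOe} \lesssim \N{w}_{\HokOe} \lesssim \N{g_D}_{\HokG}$, so combining with the bound on $\chi v$ yields \eqref{eq:Dbound}. The main thing to verify carefully is that every term arising from the gluing is bounded by $\N{g_D}_{\HokG}$ with no spurious powers of $k$; this is exactly the point of choosing the perturbation $\ri\abs{k}$ rather than a $k$-independent zeroth-order term as in \cite{LaVa:12,Sp:14}. With this choice, the damping strength matches the natural $k$-weighting on the norms, so the commutator term (which brings down one derivative, hence a factor of order $|k|$ when measured against the $H^1_k$ norm) and the damping term (which has an explicit $|k|$) balance exactly, producing the sharp estimate. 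This balancing is the step where one must check the $k$-dependence most carefully; once it is established, the rest of the argument is routine.
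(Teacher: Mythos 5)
Your argument is essentially the same as the paper's: subtract a cutoff multiple of the outgoing damped solution $w$ from $u$ to reduce to the homogeneous Dirichlet problem with compactly supported forcing, apply the nontrapping resolvent estimate of Theorem \ref{thm:resolve} to that remainder, bound the forcing by $\norm{w}_{H^1_k}$, and finish with Lemma \ref{lem:damp}. The only cosmetic difference is that you introduce a separate cutoff $\psi$ equal to one on $\supp\chi$, whereas the paper reuses the symbol $\chi$ for both roles; your bookkeeping is, if anything, slightly cleaner, but it is the same proof.
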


\bpf
Let $w$ be as in Lemma \ref{lem:damp}.
Let $\chi\in C^\infty_{c}(\Oe)$ be equal to one in a neighborhood of $\Oi$, and define $v$ by $v:= u - \chi w$. This definition implies that $v\in \Holoce(\Oe)$ and satisfies the Sommerfeld radiation condition \eqref{eq:src} (with $\la=k$),
\beqs
\Delta v +k^2 v=h, \quad\text{and}\quad\gamma_+ v=0,
\eeqs
where 
\beqs
h:= \ri \abs{k} \chi w - w \Delta \chi - 2 \nabla w \cdot \nabla \chi.
\eeqs
Since $h$ has compact support, the resolvent estimate \eqref{eq:resolvent} implies that 
\beqs
\N{\chi v}_{\HokOe}\lesssim \N{w}_{\HokOe},
\eeqs
and thus
\beqs
\N{\chi u}_{\HokOe}\lesssim \N{w}_{\HokOe}.
\eeqs
Using the bound \eqref{eq:Dbound_damp}, we obtain the result \eqref{eq:Dbound}.

% and define $v$ by $v:=u-w$. Then $v$ satisfies the Sommerfeld radiation condition, $\gamma_+ v=0$, and $\Delta v +k^2 v = \ri k w$. By the resolvent estimate \eqref{eq:resolvent},
%\beqs
%\N{v}_{\HokOe}\lesssim k \N{w}_{L^2(\Oe)},
%\eeqs
%and thus 
%\beqs
%\N{u}_{\HokOe}\lesssim \N{w}_{\HokOe}.
%\eeqs
%Using the bound \eqref{eq:Dbound_damp}, we obtain the result \eqref{eq:Dbound}.

% (ii) This follows in an analogous way to Part (ii), except that we now use \eqref{eq:Nbound_damp} instead of \eqref{eq:Dbound_damp}.
\epf

\begin{corollary}\label{cor:DtN}
If $\Oe$ satisfies one of the conditions (a), (b), and (c) in Theorem \ref{thm:resolve} and $u$ is the outgoing solution to the Dirichlet problem \eqref{eq:Dprob} then
\beq\label{eq:DtN}
\N{\dnpu}_{\LtG}\lesssim \N{g_D}_{\HokG}.
\eeq
% (ii) If $\Oe$ satisfies the resolvent estimate and $u$ is the outgoing solution to the Neumann problem \eqref{eq:Nprob} then
% \beq\label{eq:NtD}
% \N{\gamma_+ u}_{\HokG}\lesssim k\N{g_N}_{\LtG}.\textbf{* factor of $k$ out *}
% \eeq
\end{corollary}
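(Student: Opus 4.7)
The plan is to combine the interior/exterior control on $u$ provided by Theorem \ref{thm:damp} with the Ne\v{c}as-type estimate \eqref{eq:41} of Lemma \ref{lem:Necas}. This is exactly the combination already signalled in \S\ref{sec:2-1}: once one controls $\N{\chi u}_{\HokOe}$ by $\N{g_D}_{\HokG}$, the Ne\v{c}as identity converts this into control of the Neumann trace in $\LtG$ up to the tangential gradient of the Dirichlet data, which is itself controlled by the $\HokG$ norm of $g_D$.

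More concretely, I would first note that the hypotheses of Lemma \ref{lem:Necas} are satisfied: $u \in \Holoce(\Oe)$ solves $\Delta u + k^2 u = 0$ in $\Oe$ (so the source $f$ there is zero and has trivially compact support), and $\gamma_+ u = g_D \in \HoG$. Inequality \eqref{eq:41} then gives
\beqs
\N{\dnpu}^2_{\LtG} \lesssim \N{\nT(\gamma_+ u)}^2_{\LtG} + \N{\chi u}^2_{\HokOe}.
\eeqs

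Next, I would invoke Theorem \ref{thm:damp}, whose hypotheses on $\Oe$ are assumed in the statement of the corollary, to bound the second term by $\N{g_D}^2_{\HokG}$. The first term is handled directly by $\gamma_+ u = g_D$ together with the elementary inequality $\N{\nT g_D}_{\LtG} \leq \N{g_D}_{\HokG}$, which follows from the definition \eqref{eq:H1def} of the weighted $H^1$-norm on $\Gamma$. Putting these two bounds together and taking square roots yields \eqref{eq:DtN}.

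There is no real obstacle here; the hard work has already been done in Lemma \ref{lem:damp} (the damped Dirichlet estimate) and in the gluing argument of Theorem \ref{thm:damp} that transfers it to the undamped Helmholtz problem via the nontrapping resolvent estimate of Theorem \ref{thm:resolve}. The only thing worth double-checking is that the implicit constants in Lemma \ref{lem:Necas} are $k$-independent for $|k| \geq k_0$, which is guaranteed by the way the $k$-dependence has been tracked in \cite[Lemma 3.5]{Sp:14}.
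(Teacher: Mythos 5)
Your proof is correct and takes exactly the same route as the paper: the paper's proof of Corollary \ref{cor:DtN} simply states that it ``follows from combining the bound \eqref{eq:Dbound} with Lemma \ref{lem:Necas},'' and your write-up spells out precisely this combination (apply \eqref{eq:41} with $f=0$, bound $\N{\chi u}_{\HokOe}$ by Theorem \ref{thm:damp}, and bound $\N{\nT g_D}_{\LtG}$ by $\N{g_D}_{\HokG}$). No gaps.
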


\bpf
This follows from combining the bound \eqref{eq:Dbound} with Lemma \ref{lem:Necas}.
% (ii) The multiplicative trace inequality \eqref{eq:multtrace} and the bound \eqref{eq:Nbound} imply that
% \beqs
% \N{\gamma_+ u }_{\LtG} \lesssim \N{g_N}_{\LtG}. \textbf{* factor of $k$ out *}
% \eeqs
% Using this last inequality along with \eqref{eq:Nbound} in Part (ii) of Lemma \ref{lem:5} we obtain the result \eqref{eq:NtD}.
\epf

\bpf[Proof of Theorem \ref{thm:1}]
The bound \eqref{eq:DtN2} is proved in Corollary \ref{cor:DtN} above. The bound \eqref{eq:DtN1} then follows by Lemma \ref{lem:inter}.
\epf

\section{Exterior Neumann-to-Dirichlet estimates}\label{sec:NtD}

In this section we prove Theorem \ref{thm:2}, i.e. a bound on the exterior Neumann-to-Dirichlet map for solutions of the
Helmholtz equation satisfying the Sommerfeld radiation condition.  

This problem is subtler than obtaining bounds on the
Dirichlet-to-Neumann map, since the Neumann boundary condition does
not satisfy the uniform Lopatinski condition, hence the classic
estimates of Kreiss and Sakamoto do not apply to the wave equation,
nor does the simple stationary argument used above for the Dirichlet
problem.  Indeed, the problem becomes an intrinsically microlocal
one, with the degeneracy of the normal derivative at the glancing
set making even global energy estimates extremely sensitive to the
boundary geometry (which was irrelevant to energy estimates in the
Dirichlet case).

The main technical ingredient in our argument is a collection of estimates proved by Tataru \cite{Ta:98} for solutions to the wave equation with Neumann (or indeed many other) boundary conditions, which we now recall.  
The following is a restatement of part of Theorem~9 of
\cite{Ta:98}.

\begin{theorem}[Tataru]\label{theorem:tataru}
Let $\Gamma$ be smooth.
Suppose $v$ satisfies
\begin{equation}\begin{aligned}
\Box v &=0 \text{ on } \Omega_+ \times [0,T],\\
\partial_n^+ v&=g,\\
v(0)&=v_t(0)=0.
\end{aligned}
\end{equation}

Assume $g \in L^2(\Gamma\times [0,T])$.  Then 
$$v \in H^{\alpha}(\Omega_+\times [0,T])$$ and $$\gamma_+ v \in H^{\beta}(\Gamma \times [0,T]),$$
where\footnote{The positive curvature used here in dimensions $d=2,3$
  generalizes to be positive second
      fundamental form, in general dimension.} \begin{equation}\label{eq:alphabeta}
\begin{cases} \alpha=2/3,\ \beta=1/3 & \text{in general},\\
  \alpha=5/6,\ \beta=2/3 & \text{if } \Gamma \text{ has strictly
    positive curvature.}\end{cases}\end{equation}
\end{theorem}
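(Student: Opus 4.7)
The approach I would take is fundamentally microlocal: near any boundary point, partition phase space $T^*(\Gamma \times [0,T])$ according to where the characteristic variety of $\Box$ sits relative to the cotangent bundle of the boundary. This yields three regions: the \emph{hyperbolic region}, where the two normal roots of the symbol are real and distinct; the \emph{elliptic region}, where these roots are complex; and the \emph{glancing region}, where they coalesce. In the hyperbolic and elliptic regions one has no loss: in the hyperbolic case a standard Fourier integral operator parametrix for the Neumann problem shows that $L^2$ Neumann data produces $H^1$ interior regularity and $H^{1/2}$ trace regularity, while in the elliptic case the Neumann-to-Dirichlet map is an elliptic pseudodifferential operator of order $-1$, giving the same bounds. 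All of the loss in $\alpha$ and $\beta$ is concentrated in the glancing region.

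To analyze glancing, I would follow Melrose--Taylor and reduce the operator near a glancing point to the Airy normal form $D_\rho^2 - \rho + \text{(lower order)}$ in a rescaled normal coordinate. The model problem is then solved explicitly by Airy functions, whose asymptotics display a canonical scale of $|\tau|^{-1/3}$ in frequency (where $\tau$ is dual to time). A dyadic Littlewood--Paley decomposition adapted to this $1/3$ scale, combined with almost-orthogonality between the pieces (Cotlar--Stein), produces $L^2$-boundedness of the relevant boundary parametrix with a loss of $1/3$ of a derivative on the interior side and an additional $1/3$ on restriction to the boundary. This explains the generic exponents $\alpha = 2/3,\ \beta=1/3$.

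For the strictly positively curved case, gliding rays are absent: every ray that grazes the boundary immediately departs into the interior (diffractive, not gliding). This means the glancing region in phase space is visited by bicharacteristics only for a time comparable to $|\tau|^{-1/3}$, so one can integrate the glancing estimate in time and gain an additional $1/6$ of a derivative in both estimates, yielding $\alpha = 5/6,\ \beta = 2/3$. Implementing this would require refining the Airy parametrix to track the curvature of the boundary and exploiting a second almost-orthogonality across dyadic temporal scales.

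The main obstacle is clearly the glancing analysis: constructing a parametrix uniformly valid through the degeneracy of the symbol, and then extracting sharp $L^2$ bounds from the Airy asymptotics. This is the technical heart of Tataru's argument (Theorem~9 of \cite{Ta:98}), and I would invoke it as a black box rather than redo the microlocal machinery. For the purposes of this paper, what matters is that the exponents $(\alpha,\beta)$ in \eqref{eq:alphabeta} are precisely those which, combined with the Vainberg parametrix (converting the wave-equation restriction estimate into a resolvent restriction estimate) and Lemmas~\ref{lem:Necas} and \ref{lem:H1L2}, produce the $k^{1-\beta}$ losses advertised in Theorem~\ref{thm:2}.
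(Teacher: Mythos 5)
The paper does not prove this statement at all: it is explicitly introduced with the sentence ``The following is a restatement of part of Theorem~9 of \cite{Ta:98}'' and then stated without proof. Your proposal ultimately does the same thing --- you sketch the microlocal mechanism (hyperbolic/elliptic/glancing decomposition, Melrose--Taylor Airy reduction at glancing, diffractive-versus-gliding dichotomy in the convex case) but then conclude by invoking Tataru's Theorem~9 as a black box. So you land in the same place the paper does, and the heuristic picture you give of \emph{why} Tataru's exponents look the way they do is a reasonable (and in fact informative) gloss on a result the paper treats as off-the-shelf.

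One small arithmetic slip in the heuristic: for the strictly positively curved case you say one ``gains an additional $1/6$ of a derivative in both estimates,'' but the boundary exponent improves from $\beta = 1/3$ to $\beta = 2/3$, a gain of $1/3$, while only the interior exponent gains $1/6$ ($\alpha: 2/3 \to 5/6$). In Tataru's scheme the boundary trace improvement is not simply the interior improvement carried through a restriction; the convexity enters the two estimates at different points in the parametrix analysis. This doesn't affect the correctness of the invocation, only the accuracy of the motivating story.
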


Other results from \cite{Ta:98} that we shall use
(Theorems~3,5) estimate Dirichlet
data for solutions of the Helmholtz equation with homogeneous Neumann condition
and interior inhomogeneity:
\begin{theorem}[Tataru]\label{theorem:tataru2}
Let $\Gamma$ be smooth.
Suppose $v\in H^1_\loc$ satisfies
\begin{equation}\begin{aligned}
\Box v &=F \text{ on } \Omega_+ \times [0,T],\\
\partial_n^+ v&=0,\\
v(0)&=v_t(0)=0.
\end{aligned}
\end{equation}

Assume $F \in L^2(\Omega_+ \times [0,T]).$   Then
$$\gamma_+ v \in H^{\alpha}(\Gamma \times [0,T]),$$
where $\alpha$ is given by \eqref{eq:alphabeta}.
\end{theorem}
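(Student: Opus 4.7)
The plan is to derive Theorem~\ref{theorem:tataru2} from Theorem~\ref{theorem:tataru} by a duality argument, exploiting the fact that both estimates concern the Neumann wave propagator on $\Omega_+$ and involve the same microlocal loss at the glancing set. To show $\gamma_+ v \in H^\alpha(\Gamma \times [0,T])$, it suffices to bound the pairing $\int_0^T\!\int_\Gamma g\,\gamma_+ v \,dS\,dt$ for arbitrary test data $g \in H^{-\alpha}(\Gamma \times [0,T])$.

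I would introduce the time-reversed adjoint Neumann problem: let $w$ solve
\[
\Box w = 0\ \text{on}\ \Omega_+\times[0,T],\quad \partial_n^+ w = g,\quad w(T)=w_t(T)=0.
\]
By time-reversal symmetry of $\Box$ and of the Neumann condition, the estimates of Theorem~\ref{theorem:tataru} apply to $w$. A standard integration by parts on $\Omega_+ \times [0,T]$, using $\Box v = F$, $\Box w = 0$, $\partial_n^+ v = 0$, $\partial_n^+ w = g$, and the cancelling initial/final conditions, yields the identity
\[
\int_0^T\!\!\int_\Gamma g \,\gamma_+ v\,dS\,dt = \int_0^T\!\!\int_{\Omega_+} F\,w\,dx\,dt,
\]
and hence by Cauchy--Schwarz
\[
\Big|\int g\,\gamma_+ v\Big| \leq \Norm{F}_{L^2(\Omega_+\times[0,T])}\Norm{w}_{L^2(\Omega_+\times[0,T])}.
\]
The theorem would then follow from the bound $\Norm{w}_{L^2(\Omega_+\times[0,T])}\lesssim \Norm{g}_{H^{-\alpha}(\Gamma\times[0,T])}$.

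The main obstacle is precisely this last $L^2$-in-$\Omega_+$ estimate, which is the dual of Theorem~\ref{theorem:tataru}'s interior bound $\Norm{w}_{H^\alpha}\lesssim \Norm{g}_{L^2}$ but does not follow formally from it: the adjoint of $L^2(\Gamma)\to H^\alpha(\Omega_+)$ is a map $H^{-\alpha}(\Omega_+)\to L^2(\Gamma)$, which is the wrong pair of spaces. The estimate we actually need does follow from the microlocal machinery underlying Tataru's proof in \cite{Ta:98}: the FBI-transform analysis and second microlocalization at the glancing set produce a family of mapping estimates for the Neumann propagator on a scale of Sobolev spaces, and the dual statement $H^{-\alpha}(\Gamma)\to L^2(\Omega_+)$ needed here belongs to that family. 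The essential content of the proof is thus a repackaging of Tataru's microlocal estimates, with the main new ingredient being the organization of the elliptic, hyperbolic, and glancing microlocal contributions in the dual Sobolev norm.
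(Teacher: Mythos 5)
The paper itself gives no proof of this theorem: it is quoted verbatim as a restatement of Theorems~3 and~5 of Tataru~\cite{Ta:98}, so there is no ``paper's own proof'' to compare against. Your proposal attempts to \emph{derive} the statement from Theorem~\ref{theorem:tataru}, and the issue is that the reduction you set up is circular.

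Your Green's identity is correct, and it establishes that the solution operator $T_2 \colon F \mapsto \gamma_+ v$ of Theorem~\ref{theorem:tataru2} and the solution operator $T_1 \colon g \mapsto w$ of (the time-reversed) Theorem~\ref{theorem:tataru} are $L^2$-adjoints of one another. The estimate you need to close the argument, namely
\[
\norm{w}_{L^2(\Omega_+ \times [0,T])} \lesssim \norm{g}_{H^{-\alpha}(\Gamma \times [0,T])},
\]
is therefore \emph{precisely the adjoint reformulation of Theorem~\ref{theorem:tataru2} itself}. Indeed, by the same pairing you write, boundedness of $T_1 \colon H^{-\alpha}(\Gamma) \to L^2(\Omega_+)$ is equivalent to boundedness of $T_2 \colon L^2(\Omega_+) \to H^{\alpha}(\Gamma)$, with equal norms. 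So the duality step merely re-expresses the theorem you want; it does not reduce it to Theorem~\ref{theorem:tataru}. You partially notice this (``does not follow formally from it''), but the resolution you offer---that the needed bound ``belongs to the family'' produced by Tataru's FBI/second-microlocalization analysis---is not a proof; it is a citation of Tataru's Theorems~3 and~5 in disguise, which is exactly what the paper does directly without the circular detour. If one really wanted to deduce the $H^{-\alpha}(\Gamma) \to L^2(\Omega_+)$ bound from the $L^2(\Gamma) \to H^{\alpha}(\Omega_+)$ bound of Theorem~\ref{theorem:tataru}, one would need a genuine shift argument compatible with the Neumann boundary condition (e.g.\ commuting through fractional powers of a tangential operator), and no such argument appears here. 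The observation that the two estimates are tied together by duality is a nice organizing remark, but as a proof the proposal is a reformulation, not a derivation.
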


We now turn to an estimate analogous to the usual nontrapping
resolvent estimate that will allow us to estimate the Dirichlet data
of the Neumann resolvent for a nontrapping obstacle.

\begin{lemma}\label{lemma:neumannresolvent}
Assume that $\Omega_+$ is nontrapping.
Let $R_N(k)$ denote the outgoing Neumann resolvent on
$\Omega_+,$ acting on $f\in \dom^s_{N,k}.$  Then for $k\gg 1,$ for every $s\in \RR$
\begin{equation}\label{neumann1}
\norm{\chi R_N(k) \chi f}_{\dom^{s+1}_{N,k}} \lesssim \norm{f}_{\dom^s_{N,k}}
\end{equation}
and for $s \in [0,1]$
\begin{equation}\label{neumann2}
\norm{\gamma_+ R_N(k) \chi f}_{H^{s+\alpha}_k} \lesssim \norm{f}_{\dom^s_{N,k}}
\end{equation}
where $\alpha$ is given by \eqref{eq:alphabeta}.
\end{lemma}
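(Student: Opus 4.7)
The plan is to prove \eqref{neumann1} via the classical Vainberg parametrix adapted to Neumann boundary conditions together with an elliptic bootstrap, and to prove \eqref{neumann2} by combining the same parametrix construction with Tataru's Dirichlet trace estimate (Theorem~\ref{theorem:tataru2}).

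For \eqref{neumann1} at $s=0$, this is the standard $L^2 \to L^2$ nontrapping resolvent estimate $\norm{\chi R_N(k)\chi f}_{L^2} \lesssim \abs{k}^{-1} \norm{f}_{L^2}$, whose proof is identical to the Dirichlet case of Theorem~\ref{thm:resolve}(a): the Melrose--Sj\"ostrand propagation of singularities is insensitive to the choice of (elliptic) boundary condition, so weak Huygens holds for the Neumann wave equation and the Vainberg parametrix converts it into the resolvent estimate. To reach $s \geq 1$, I would use cutoff elliptic regularity: letting $u = R_N(k)(\chi f)$ and $\chi_1$ a cutoff with $\chi_1 \equiv 1$ on $\supp\chi$, the function $\chi_1 u$ satisfies $(\Lap+k^2)(\chi_1 u) = \chi_1\chi f + [\Lap,\chi_1] u$ with the commutator controlled by the $s=0$ bound; iterating the standard Neumann elliptic regularity for $-\Lap$ gives \eqref{neumann1} for integer $s \geq 0$, and negative or non-integer $s$ follow by duality and interpolation.

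For \eqref{neumann2} at $s=0$, let $P := -\Lap$ equipped with homogeneous Neumann boundary condition on $\Omega_+$, and let $V(t) := \sin(t\sqrt{P})/\sqrt{P}$ be the associated sine propagator, so that $\Box V = 0$, $V(0) = 0$, $V_t(0) = I$, and $V$ carries homogeneous Neumann conditions. Fix $T$ larger than the escape time for billiards originating in $\supp\chi$, let $\eta \in C_c^\infty(\RR)$ be equal to $1$ on $[0, T]$ and supported in $[-1, T+1]$, and define $\tilde v(x,t) := \eta(t) V(t)(\chi f)(x)$. Then $\tilde v$ has vanishing Cauchy data at $t = -1$ and zero Neumann trace, and
\[
\Box \tilde v = 2\eta'(t) V_t(t)(\chi f) + \eta''(t) V(t)(\chi f) \in L^2(\Omega_+ \times [-1, T+1]),
\]
with $L^2$-norm $\lesssim \norm{f}_{L^2}$ by the wave energy identity. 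Theorem~\ref{theorem:tataru2} then gives $\norm{\gamma_+ \tilde v}_{H^\alpha(\Gamma \times \RR)} \lesssim \norm{f}_{L^2}$, and Lemma~\ref{lemma:FTSobolev} upgrades this to the pointwise-in-$k$ bound
\[
\bigl\lVert (\mathcal{F}_{t\to k}^{-1}\gamma_+ \tilde v)(k,\cdot)\bigr\rVert_{H^\alpha_k(\Gamma)} \lesssim \norm{f}_{L^2},\qquad k \in \RR.
\]
The spectral identity $-iR_N(k) g = \int_0^\infty e^{ikt} V(t) g\,dt$ (limiting absorption from $\Im k > 0$) identifies $\mathcal{F}_{t\to k}^{-1}\tilde v$ with $-iR_N(k)(\chi f)$ modulo contributions from the intervals $[-1, 0]$ and $[T, T+1]$ (entire and uniformly bounded in $k$) plus a tail $-\int_{T+1}^\infty e^{ikt} V(t)(\chi f)\,dt$. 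The range $s \in (0,1]$ then follows by the same argument applied to smoother data, combined with interpolation.

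The principal obstacle is the tail analysis: one must show that $\int_{T+1}^\infty e^{ikt} \chi V(t)\chi f\,dt$ contributes a uniformly bounded (indeed smoothing) operator in $H^\alpha_k(\Gamma)$. This is the core Vainberg/Lax--Phillips bookkeeping, and it rests on local energy decay for $\chi V(t)\chi$---a consequence of Melrose--Sj\"ostrand propagation together with the analytic continuation of $\chi R_N(k)\chi$ to a strip $\Im k > -\varepsilon$ below the real axis under the nontrapping assumption---to render the tail a genuine higher-order correction.
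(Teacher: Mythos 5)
Your overall strategy is the same as the paper's (Vainberg-type parametrix for the Neumann resolvent, fed into Tataru's trace estimate and Fourier transformed via Lemma~\ref{lemma:FTSobolev}), and your elliptic bootstrap argument for \eqref{neumann1} is a workable variant of what the paper does (the paper instead conjugates by $(-\Lap+k^2)^\ell$, interpolates to $L^2\to\dom^1_{N,k}$, and uses duality for $s<0$). However, there are two genuine gaps in what you propose for \eqref{neumann2}.

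First, the tail. You correctly identify that the integral $\int_{T+1}^\infty e^{ikt}\chi V(t)\chi f\,dt$ is the crux of the matter, but you do not close it, and the suggestion that it follows from ``local energy decay for $\chi V(t)\chi$'' plus ``analytic continuation of $\chi R_N(k)\chi$ to a strip'' is circular: the holomorphic extension of the cutoff resolvent below the axis is usually a \emph{consequence} of the Vainberg construction, not an input to it. More importantly, you need not just that the tail is uniformly bounded in $L^2$, but that its boundary trace lies in $H^\alpha_k(\Gamma)$ with the right bound, which requires knowing the tail is genuinely smoothing near $\Gamma$. The paper sidesteps this by using the precise form of the Vainberg parametrix from \cite{TaZw:00}: there $R_N(k)\chi=R^\sharp(k)(I+K(k))^{-1}$, the cutoff $\zeta(t,z)$ depends on $|z|$ as well as $t$ (this spatial dependence is what makes the error term $V_a(t)$ living only where $1-\chi_c$ is supported, hence vanishing on $\Gamma$), and the operator $K(k)$ is shown in \cite{TaZw:00} to have small $L^2\to L^2$ norm for $k\gg1$ so that the Neumann series inverts it without damaging the trace estimate. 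Your purely temporal cutoff $\eta(t)$ loses this structure, and the fact that you then have to split off a tail at all is a symptom of that.

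Second, the extension to $s\in(0,1]$. Saying it follows ``by the same argument applied to smoother data, combined with interpolation'' is not a proof: applying the Tataru estimate with $f\in\dom^1_{N,k}$ still only gives you the \emph{same} $\alpha$ gain on the trace, since Theorem~\ref{theorem:tataru2} is stated for $L^2$ forcing and does not by itself produce $H^{1+\alpha}_k(\Gamma)$ regularity. The paper's actual argument is a concrete commutator trick: one picks a compactly supported vector field $V$ that is tangential near $\Gamma$, writes $(\Lap+k^2)Vu=-Vf-[V,\Lap]u$, applies the $s=0$ boundary estimate and \eqref{neumann1} to control $\gamma_+ R_N(k)Vf$ and $\gamma_+ R_N(k)[V,\Lap]u$ in $H^\alpha_k$, and concludes $V_\Gamma\gamma_+ u\in H^\alpha_k$ for every tangential $V_\Gamma$; combined with the trivial $\smallang{k}$-weighted bound this gives the $H^{1+\alpha}_k$ estimate, and then interpolation. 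Some such idea trading interior tangential derivatives for commutators is needed; it does not come for free from interpolation alone.
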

We remark that $2\alpha=1+\beta.$

\begin{proof}
  The first part of this estimate is essentially the standard
  nontrapping resolvent estimate, albeit considered in more general
  weighted spaces than $L^2.$ The second part by contrast requires
  Tataru's boundary estimates together with an examination of the
  details of the Vainberg construction of a parametrix for the nontrapping resolvent
  \cite[Chapter X]{Va:89}.  This parametrix is indeed one of the usual routes to
  obtaining the standard resolvent estimate (\eqref{neumann1} with
  $s=0$) from the weak Huygens principle (eventual
  escape of singularities), and depends crucially on propagation of
  singularities results that enable us to conclude weak Huygens from
  nontrapping of billiard trajectories.
For details, we refer the reader to Theorem 2 in \cite{Va:89}, Chapter X; see also
\cite{Me:79} and \cite{MeSj:82} for the geometry and microlocal
analysis aspects.

To establish the first part of the result, we recall that Vainberg's
estimate (see also the ``black-box'' presentation of Vainberg's
method in \cite{TaZw:00}) yields
\begin{equation}\label{resest:1}
\norm{\chi_1 R_{N}(k) \chi_2}_{L^2\to L^2} \lesssim \smallang{k}^{-1}.
\end{equation}
We must extend to more general spaces in the domain and range.  
First, note that if $(\Lap+k^2) u=-f$ and $f$ has compact support in a fixed
region and $u$ satisfies the radiation condition then we of course can write, for $\chi_0$ compactly supported,
$$
\norm{\chi_0 \Lap u} \leq k^2 \norm{\chi_0 u}+\norm{\chi_0 f} \lesssim \ang{k} \norm{f},
$$
hence for any $\chi$ with smaller support than $\chi_0,$
\begin{equation}
\norm{\chi u}_{\dom^2_{N,k}} \lesssim \smallang{k}\norm{f},
\end{equation}
i.e., in particular 
\begin{equation}\label{resest:2}
\norm{\chi R_{N}(k) \chi}_{L^2 \to \dom^2_{N,k}} \lesssim \smallang{k}.
\end{equation}
Thus we obtain by interpolating \eqref{resest:1} and \eqref{resest:2}
$$
\norm{\chi R_{N}(k) \chi}_{L^2 \to \dom^1_{N,k}} \lesssim 1.
$$

Now once again if $(\Lap+k^2) u=-f$ then $(\Lap+k^2)
(-\Lap+k^2)^{\ell} u=-(-\Lap+k^2)^{\ell} f,$ hence by compact support
of $f$ the resolvent estimate yields
$$
\norm{\chi_0 (-\Lap+k^2)^\ell u}_{\dom^1_{N,k}} \lesssim \norm{(-\Lap+k^2)^{\ell} f},
$$
so that for $\chi$ with smaller support than $\chi_0$ we have
$$
\norm{\chi u}_{\dom^{2\ell+1}_{N,k}} \lesssim \norm{f}_{\dom^{2\ell}_{N,k}}.
$$
Interpolation now yields
$$
\norm{\chi R_{N}(k) \chi}_{\dom^{s}_{N,k}\to\dom^{s+1}_{N,k}} \lesssim 1
$$
for all $s\geq 0.$  Now duality (which exchanges $k$ and $-k$) yields the estimate for $s<0$ as
well.  This completes the proof of \eqref{neumann1}.

To prove \eqref{neumann2} we begin by using the Vainberg parametrix construction as
presented in \cite{TaZw:00} to establish the estimate for $s=0.$  In the
notation of that paper, we have (see the two displayed equations
preceding (3.5))
$$
R_N (k)\chi  =R^\sharp(k)(I+K(k))^{-1}
$$
where $K(k)$ is a holomorphic family of operators that is
shown to have small $L^2\to L^2$ operator norm for $k \gg 1,$ so that $(I+K(k))$ is
invertible there.  The parametrix $R^\sharp(k)$ is defined by
\begin{equation}\label{eq:vainbergparametrix}
R^\sharp(k)=\widetilde{R}(k)-\mathcal{F}_{t \to k}((1-\chi_c) V_a(t))
\end{equation}
where $\chi_c=1$ near $\Omega_-,$ and
$$
\widetilde{R}(k)=-i\mathcal{F}_{t \to k}(\zeta H(t)U(t) \chi).
$$
Here $\chi$ (also called $\chi_a$ in \cite{TaZw:00}) is a cutoff equal to $1$ in a neighborhood of
$\Omega_-,$ $H(t)$ is the Heaviside function, 
$$
U(t) = \frac{\sin t\sqrt{-\Lap}}{\sqrt{-\Lap}}
$$
(sine propagator for the Neumann Laplacian),
and 
$\zeta$ is a cutoff
with
  \begin{equation*}
    \zeta (t,z) =
    \begin{cases}
      1 & t \leq |z| + T_{0} \\
      0 & t \geq |z|+T_{0}'
    \end{cases}
  \end{equation*}
  for some $T_{0}' \geq T_{0}$.  The term $V_a(t)$ is obtained by
  solving the free wave equation (i.e.\ with the obstacle removed)
  with forcing given by the error term $-[\Box, \zeta] U(t) \chi$ and
  zero Cauchy data.  Happily, its analysis will be of no concern here,
  as the factor $(1-\chi_c)$ ensures that the corresponding term in
  \eqref{eq:vainbergparametrix} vanishes on $\Gamma.$

It thus suffices from \eqref{eq:vainbergparametrix} to know that $\gamma_+ \widetilde{R}(k) $ satisfies
the desired estimates.  To see this, note that if $f \in L^2,$ 
$\zeta U(t) f$ lies in $L^\infty([0, T];  H^1)$ for each $T<\infty,$ simply
by the functional calculus for the Neumann Laplacian and the
identification of $H^1(\Omega_+)$ with $\dom_{N}^1$. Now
%Theorems 3 and 5 of \cite{Ta:98}
Theorem \ref{theorem:tataru2}
implies that
$$
\gamma_+ \zeta H(t) U(t) \chi  f \in H^{\alpha}(\RR\times \Gamma).
$$
(Note that $\zeta$ has compact support in time in a neighborhood of
the obstacle, so there is no difference between local and global
results here; note also that the factor of $H(t)$ does not affect the
regularity since $U(0)=0$.)
We may now
Fourier transform this estimate by Lemma~\ref{lemma:FTSobolev} to get
$$
\gamma_+ \widetilde{R}(k) f \in H^\alpha_k
$$
when $f \in L^2.$

Finally, we extend to more general $s$ in the estimate
\eqref{neumann2}.  Fix Fermi normal coordinates near $\Gamma$ with $x$
denoting the normal variable (distance to $\Gamma$) and $y$ denoting
coordinates along $\Gamma.$  Let $V$ denote any smooth, compactly supported vector field
on $\Omega_+$ such that near $\Gamma,$ $V$ is of the form $\sum
a_j(x,y) \pa_{y_j}.$  Then $V$ can be restricted
to $\Gamma$ to give a (indeed, any arbitrary) vector field $V_\Gamma.$
Note that $[\Lap,V]$ is then a second order differential operator in
the $\pa_{y_j}$'s only near $\Gamma,$ hence we have (cf.\
\cite[p.407]{Ta:96a})
$$
[\Lap,V]: \dom^2_{N,k}\to L^2.
$$
Now if
$$
(\Lap+k^2)u=-f \in H^1_c(\Omega_+)
$$
with $u$ outgoing and $f$ compactly supported in some fixed set, then we compute
$$
V (\Lap+k^2)u=-V f,
$$
hence
$$
(\Lap+k^2) V u + [V,\Lap] u=-Vf.
$$
Thus, applying the Neumann resolvent and restricting gives
\begin{equation}\label{foobar}
V_\Gamma \gamma_+ u=\gamma_+ V u=-\gamma_+ R_N(k) Vf-\gamma_+ R_N(k) [V,\Lap] u.
\end{equation}
Now by the estimate \eqref{neumann2} for $s=0$ obtained above, we have
$$
\norm{\gamma_+ R_N(k) Vf}_{H^\alpha_k} \lesssim \norm{Vf}_{L^2} \lesssim \norm{f}_{H^1_k}.
$$
Moreover, \eqref{neumann1} yields $u \in \dom^2_{N,k}$ with norm
estimated by $\norm{f}_{H^1_k},$ hence
$$
\norm{[V,\Lap] u}_{L^2}\lesssim\norm{f}_{H^1_k}.  
$$
Thus, again by the $s=0$
estimate \eqref{neumann2}, 
$$
\norm{\gamma_+ R_N(k) [V,\Lap] u}_{H^\alpha_k} \lesssim \norm{f}_{H^1_k},
$$
and putting together our estimate for the two terms on the RHS of  \eqref{foobar}, we have obtained for any vector field $V_\Gamma$ on $\Gamma,$
\begin{equation}\label{restrictedsobolev1}
\norm{V_\Gamma \gamma_+ R_N(k) f}_{H^\alpha_k}  \lesssim \norm{f}_{H^1_k}.
\end{equation}
Also, just the fact that $f \in L^2$ and the $s=0$ estimate gives
\begin{equation}\label{restrictedsobolev2}
\smallang{k} \norm{ \gamma_+ R_N(k) f}_{H^\alpha_k} \lesssim \smallang{k}
\norm{f}_{L^2} \lesssim \norm{f}_{H^1_k}.
\end{equation}
Since $V_\Gamma$ was arbitrary, putting together
\eqref{restrictedsobolev1} and \eqref{restrictedsobolev2} yields, for
$f$ compactly supported in a fixed set,
$$
\norm{\gamma_+ R_N(k) f}_{H^{1+\alpha}_k}\lesssim \norm{f}_{H^1_k}.
$$
Interpolating with the $s=0$ estimate now yields \eqref{neumann2} for the whole range $s
\in [0,1].$
\end{proof}

\begin{theorem}\label{theorem:NtD}
Let $\Omega_+$ be nontrapping.  For each $\chi \in
C_c^\infty(\Oe),$ there exists $k_{0}$ so that solutions $u$ of the Helmholtz equation
\begin{equation}\label{eq:helmholtz}  \begin{aligned}
    (\lap + k^{2})u &= 0 \quad \text{ in }\Omega_+ \\
    \pa_n^+ u|_{\Gamma} &= g_N
\end{aligned}
\end{equation}
  satisfying the Sommerfeld radiation condition \eqref{eq:src} enjoy the bounds
  \begin{equation*}
    \norm{\chi u}_{H^\alpha_k(\Omega_+)}
  \lesssim \norm{g_N}_{L^2(\Gamma)}
  \end{equation*}
and
$$
\norm{\gamma_+ u}_{H^\beta_k(\Gamma)}\lesssim \norm{g_N}_{L^2(\Gamma)},
$$
for $k>k_0.$  Here $\alpha$ and $\beta$ are again given by
equation~\eqref{eq:alphabeta}. 
\end{theorem}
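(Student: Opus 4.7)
The strategy parallels the proof of Lemma~\ref{lemma:neumannresolvent}: build a parametrix for the Helmholtz Neumann boundary-value problem from Tataru's wave-equation estimates (Theorem~\ref{theorem:tataru}), then absorb the compactly supported residual via the Neumann resolvent estimate just proved.

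Fix cutoffs $\psi,\eta\in C^\infty_c(\reals)$ with $\int\psi = 1$ and $\eta\equiv 1$ on $\supp\psi$. For each $k\geq k_0$, let $V_k(t,x)$ solve the wave equation
\begin{equation*}
\Box V_k = 0 \text{ on } \Omega_+\times\reals,\quad \pa_n^+ V_k = e^{ikt}\psi(t)g_N(x),\quad V_k\equiv 0 \text{ for } t\ll 0.
\end{equation*}
Since the boundary data has $L^2$-norm $\|\psi\|_{L^2}\|g_N\|_{L^2(\Gamma)}$ uniformly in $k$, Theorem~\ref{theorem:tataru} gives the $k$-uniform bound
\begin{equation*}
\|V_k\|_{H^\alpha(\Omega_+\times I)} + \|\gamma_+ V_k\|_{H^\beta(\Gamma\times I)} \lesssim \|g_N\|_{L^2(\Gamma)}
\end{equation*}
on any compact interval $I\supset\supp\eta$. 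Define the approximate Helmholtz solution
\begin{equation*}
\tilde u(k,x) := \int_\reals e^{-ikt}\,\eta(t)\, V_k(t,x)\,dt.
\end{equation*}
Finite propagation speed makes $\tilde u$ compactly supported in $x$; the normalization $\int\eta\psi = 1$ gives $\pa_n^+\tilde u = g_N$; and a double integration by parts in $t$, using $\Lap V_k = \pa_t^2 V_k$ together with one further integration by parts exploiting $\pa_t(e^{-ikt}) = -ik\,e^{-ikt}$ to remove an arising explicit factor of $k$, yields
\begin{equation*}
r := (\Lap+k^2)\tilde u = -\int_\reals e^{-ikt}\bigl(\eta''(t)V_k + 2\eta'(t)\pa_t V_k\bigr)\,dt.
\end{equation*}
Lemma~\ref{lemma:FTSobolev}, applied pointwise in $k$ to the compactly-in-time supported integrands $\eta V_k$ and $\gamma_+(\eta V_k)$, then directly gives the desired bounds for the parametrix:
\begin{equation*}
\|\tilde u\|_{H^\alpha_k(\Omega_+)} + \|\gamma_+\tilde u\|_{H^\beta_k(\Gamma)} \lesssim \|g_N\|_{L^2(\Gamma)}.
\end{equation*}

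Next, $v := u - \tilde u$ is outgoing (as $\tilde u$ is compactly supported in $x$), satisfies $\pa_n^+ v = 0$ and $(\Lap+k^2)v = -r$, and therefore $v = -R_N(k)r$. Since $V_k\in H^\alpha$ so $\pa_t V_k\in H^{\alpha-1}$ in spacetime, a second application of Lemma~\ref{lemma:FTSobolev}, now at the negative index $\alpha-1$, gives $r\in H^{\alpha-1}_k(\Omega_+)$ with norm $\lesssim\|g_N\|_{L^2(\Gamma)}$. Because $r$ is spatially supported strictly interior to $\Omega_+$, its $\dom^{\alpha-1}_{N,k}$ norm is comparable. Invoking Lemma~\ref{lemma:neumannresolvent} with $s = \alpha-1$, and observing $s+\alpha = 2\alpha-1 = \beta$, yields
\begin{equation*}
\|\chi v\|_{\dom^\alpha_{N,k}} + \|\gamma_+ v\|_{H^\beta_k(\Gamma)} \lesssim \|g_N\|_{L^2(\Gamma)};
\end{equation*}
combining with the bounds for $\tilde u$ and invoking \eqref{domaintosobolev} to identify $\dom^\alpha_{N,k}$ with $H^\alpha_k$ completes the argument.

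The main obstacle is bookkeeping of $k$-factors: a direct computation of $r$ produces the term $-2ik\int e^{-ikt}\eta' V_k\,dt$, whose explicit $k$ would yield parasitic powers of $k$ in the final estimate; the crucial extra integration by parts converts this $k$ into a time derivative of $V_k$, at the cost of placing $r$ in a negative Sobolev space ($H^{\alpha-1}_k$ rather than $L^2$) but with a $k$-independent bound. This is precisely the Sobolev index at which Lemma~\ref{lemma:neumannresolvent} recovers both the $\dom^\alpha_{N,k}$ interior estimate and the $H^\beta_k$ trace estimate—so the indices match only because $2\alpha - 1 = \beta$. A secondary technicality is that the trace part of Lemma~\ref{lemma:neumannresolvent} is stated only for $s\in[0,1]$, whereas we apply it at $s=\alpha-1<0$; this extension follows by duality from the $s=0$ case, using the near-symmetry $R_N(k)^*\sim R_N(-k)$ together with the mapping properties of the boundary trace on the Neumann-adapted scale.
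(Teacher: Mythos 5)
Your strategy—combine Tataru's boundary-regularity estimates with the Vainberg-style parametrix and the Neumann resolvent estimates, Fourier-transform in $t$, and solve away the compactly supported residual—is exactly the paper's, and your bookkeeping of the cutoffs, the normalization $\int\eta\psi=1$, the computation of $r=-\mathcal{F}_{t\to k}\big(\eta'' V_k + 2\eta'\pa_t V_k\big)$, and the interior estimate all match. The gap is in the boundary trace estimate for the resolvent-corrected term.

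You estimate $r\in H^{\alpha-1}_k$ and then invoke the trace estimate \eqref{neumann2} of Lemma~\ref{lemma:neumannresolvent} at $s=\alpha-1<0$, but \eqref{neumann2} is only established for $s\in[0,1]$. Your proposed fix, ``duality from the $s=0$ case using $R_N(k)^*\sim R_N(-k)$,'' does not yield the needed estimate: dualizing $\gamma_+ R_N(k)\chi:\dom^s_{N,k}\to H^{s+\alpha}_k(\Gamma)$ produces an estimate for the \emph{adjoint} map $\chi R_N(-k)\gamma_+^*$ on negative Sobolev scales, which is a layer-potential-type mapping statement about $\gamma_+^*$ (essentially a surface delta mapping into $\dom^{-s}_{N,k}$); it does not recover the trace estimate for $\gamma_+ R_N(k)\chi$ at a negative index. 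The two are not related by a simple self-adjointness.

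The paper sidesteps this entirely, and the maneuver is the opposite of the one you call ``crucial'': rather than integrating by parts to turn the explicit $k$ into $\pa_t V_k$ (pushing the Fourier-transformed residual into $H^{\alpha-1}_k$), one should \emph{keep} the explicit factor of $k$. Writing the $2\eta'\pa_t V_k$ term as $\pa_t(2\eta' V_k) - 2\eta'' V_k$ and noting that $\mathcal{F}_{t\to k}\pa_t$ is multiplication by $\pm ik$, one obtains
$$
r = \mathcal{F}^{-1}\big(\pa_t f^1_\kappa + f^2_\kappa\big)\in \ang{k}\, H^\alpha_k(\Omega_+),
\qquad f^1_\kappa,\, f^2_\kappa\in H^\alpha_c(I\times\Omega_+),
$$
with norm $\lesssim\ang{k}\norm{g_N}$. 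Since $\alpha\in(0,1)$, \eqref{neumann2} applies at $s=\alpha$ directly, giving
$$
\gamma_+ R_N(k)[r]\in \ang{k}\, H^{2\alpha}_k(\Gamma)\subset H^{2\alpha-1}_k(\Gamma)=H^\beta_k(\Gamma),
$$
which is the desired trace bound. The interior estimate is unaffected, as \eqref{neumann1} holds for all $s\in\RR$. So your computation of the residual is fine, but you should hand the factor of $k$ to the weighted Sobolev norm (raising $s$ into the proved range) rather than hand it to the time derivative (lowering $s$ out of it).
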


\begin{proof}
  Fix a cutoff function $\varphi(t)$ compactly supported in $(0,1)$ with $\int
  \varphi = 1$.  Suppose that $v_{\kappa}$ is the solution of
  \begin{align*}
    \Box v_{\kappa} &= 0, \\
   \pa_n v_{\kappa} |_{\Gamma} &= \varphi (t) \re^{-\ri\kappa t}g_N(y)=h_{\kappa}(t,y), \\
    v &= 0 \quad \text{ for } t < 0.
  \end{align*}
  
 Note that $\norm{h_\kappa}_{L^2(\RR\times \Omega_+)}\lesssim
 \norm{g_N}_{L^2(\Gamma)}$ for all $\kappa;$ this estimate and all those that
 follow have implicit constants that are, crucially, uniform in $\kappa.$

Let $I\subset \RR$ be an open interval containing $\supp \varphi.$
By Tataru's estimates in Theorem~\ref{theorem:tataru} (and the compact
support of $v_{\kappa}$ on $I \times \Omega_{+}$) we obtain
\begin{equation*}
    \Norm[H^{\alpha}(I\times \Omega_+)]{v_{\kappa}} \lesssim \Norm[L^2(I
    \times \Gamma)]{h_{\kappa}} \lesssim \norm{g_N}_{L^2( \Gamma)}.
  \end{equation*}
We further choose $\psi(t)$ a cutoff function supported in $I$ and equal to
$1$ on $\supp \varphi.$  Then we also have
 \begin{equation*}
    \Norm[H^{\alpha}(\RR\times \Omega_+)]{\psi v_{\kappa}} \lesssim \norm{g_N}_{L^2(\Gamma)}.
  \end{equation*}
Hence by Lemma~\ref{lemma:FTSobolev},
 \begin{equation*}
    \Norm[H_k^{\alpha}(\Omega_+)]{\mathcal{F}^{-1}(\psi v_{\kappa})} \lesssim \norm{g_N}_{L^2(\Gamma)}.
  \end{equation*}

Now since $v_\kappa$ satisfies the wave equation we have
$$
\Box (\psi v_\kappa) =[\Box,\psi] v_\kappa \in H^{\alpha-1}(\RR\times
  \Omega_+) \cap H^{\alpha-1}(\RR; L^2(\Omega_+))
$$
with the norm of the RHS again estimated by a multiple of $\norm{g_N}.$  (Note also that $\psi v_\kappa$ has
compact support in $\Omega_+.$)
Hence since\footnote{We are of course using the fact that $\alpha-1<0$ here.} $\ang{k}^{-\alpha+1}L^2(\Omega_+) \subset
\dom_{N,k}^{\alpha-1}(\Omega_+)$ we have
\begin{equation}\label{ekappa}
(\Lap+k^2) \mathcal{F}^{-1}(\psi v_\kappa) \equiv e_\kappa \in \dom_{N,k}^{\alpha-1}(\Omega_+),
\end{equation}
where
$$
\norm{e_\kappa}_{\dom^{\alpha-1}_{N,k}} \lesssim \norm{g_N}.
$$
Now the nontrapping estimates for the Neumann resolvent
as stated in Lemma~\ref{lemma:neumannresolvent}
tell us that if $R_N(k)$ denotes the outgoing Neumann
resolvent, then for $k \gg 1$ we have\footnote{We are using the
  identification of Neumann domains and Sobolev spaces for exponents in $[0,1].$}
\begin{align*}
\norm{\chi R_N(k)[e_\kappa]}_{H_k^\alpha}  &\lesssim
\norm{e_\kappa}_{H_k^{\alpha-1}}\\
&\lesssim\norm{g_N}.
\end{align*}
Now consider
\begin{equation}\label{eq:u}
u\equiv \mathcal{F}^{-1}(\psi v_\kappa)-R_N(k)[e_\kappa].
\end{equation}
By the foregoing discussion we have
$$
\norm{\chi u}_{H_k^\alpha} \lesssim \norm{g_N}.
$$
On the other hand, we have 
$$
(\Lap+k^2)u=0,
$$
by construction.  Moreover, since we used the Neumann resolvent in
constructing $u,$
\begin{align*}
\pa_n^+ u &=\pa_n^+\mathcal{F}^{-1}(\psi v_\kappa)\\&= \mathcal{F}^{-1}(\psi(t) \varphi(t)
\re^{\ri\kappa t}  g_N)\\&=\widehat\varphi(k-\kappa) g_N.
\end{align*}
Hence if we set $\kappa=k$ we obtain $u$ as the (unique) solution of
\eqref{eq:helmholtz} satisfying the radiation condition, and have obtained
the desired interior estimate.

To derive the boundary estimates, we use
Lemma~\ref{lemma:neumannresolvent} as well as
Theorem~\ref{theorem:tataru}.  The latter implies that 
$$
\gamma_+\mathcal{F}^{-1} (\psi v_k)\in H^\beta_k,
$$
hence by \eqref{eq:u} it suffices to consider the term $R_N(k)
[e_\kappa].$ Returning to the definition \eqref{ekappa} of $e_\kappa$
we note that we can in fact write
$$
e_\kappa=\mathcal{F}^{-1}(\pa_t f^1_\kappa+f^2_\kappa),\text{ where }
f^i_{\kappa} \in H^\alpha_c(I \times \Omega_+).
$$
Thus we obtain a slightly refined estimate on $e_\kappa$:
$$
e_\kappa \in \smallang{k} H^\alpha_k(\Omega_+).
$$
Now since $\alpha\in (0,1),$ the estimate \eqref{neumann2}
of Lemma~\ref{lemma:neumannresolvent} yields an estimate on
$$\gamma_+ R_N(k) [e_\kappa]\in \smallang{k} H_{k}^{2\alpha}(\Omega_+)
\subset H_{k}^{2\alpha-1}(\Omega_+),$$
as desired.  (Recall that $2\alpha-1=\beta.$)
\end{proof}

\begin{corollary}\label{corollary:ntdshifted1}
With notation as above,
$$\norm{\chi u}_{H^1_k(\Omega_+)} \lesssim \abs{k}^{1-\alpha}
\norm{g_N}_{L^2},\quad \abs{k}\geq k_0.$$
\end{corollary}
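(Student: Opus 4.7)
The plan is to upgrade the $H^\alpha_k$ interior bound of Theorem~\ref{theorem:NtD} to an $H^1_k$ bound by feeding it (together with the boundary $H^\beta_k$ bound) into the Rellich-type identity of Lemma~\ref{lem:H1L2}. The loss of $|k|^{1-\alpha}$ comes from the $\ang{k}^2\|u\|_{L^2}^2$ term in that identity.

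Fix $R > \sup_{x \in \Omega_-}|x|$ large enough that $\supp \chi \subset \Omega_R,$ and choose an auxiliary cutoff $\widetilde\chi \in C_c^\infty(\Omega_+)$ equal to one on $\Omega_{R+1}.$ Applying Theorem~\ref{theorem:NtD} with $\widetilde\chi$ in place of $\chi,$ and using the equivalent norm $\norm{\bullet}_{H^s}+|k|^s\norm{\bullet}_{L^2}$ on $H^s_k$ recorded in \S\ref{sec:notation}, I obtain
\[
\N{u}_{L^2(\Omega_{R+1})} \lesssim |k|^{-\alpha}\N{g_N}_{L^2(\Gamma)}, \qquad \N{\gamma_+ u}_{L^2(\Gamma)} \lesssim |k|^{-\beta}\N{g_N}_{L^2(\Gamma)}.
\]

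Next I would apply Lemma~\ref{lem:H1L2} with $f=0,$ which gives
\[
\N{\nabla u}_{L^2(\Omega_R)}^2 \lesssim \ang{k}^2 \N{u}_{L^2(\Omega_{R+1})}^2 + \N{\gamma_+ u}_{L^2(\Gamma)}\N{\partial_n^+ u}_{L^2(\Gamma)}.
\]
Since $\partial_n^+ u = g_N,$ the right-hand side is controlled by $\bigl(|k|^{2-2\alpha} + |k|^{-\beta}\bigr)\N{g_N}_{L^2(\Gamma)}^2.$ The identity $\beta = 2\alpha - 1$ noted after Lemma~\ref{lemma:neumannresolvent} rearranges to $2-2\alpha = 1-\beta \geq -\beta,$ so for $|k|\geq k_0$ the first term dominates and I get $\N{\nabla u}_{L^2(\Omega_R)} \lesssim |k|^{1-\alpha}\N{g_N}_{L^2(\Gamma)}.$

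Combining this with the trivial bound $|k|\N{\chi u}_{L^2(\Omega_+)} \lesssim |k|^{1-\alpha}\N{g_N}_{L^2(\Gamma)}$ (obtained directly from the interior estimate of Theorem~\ref{theorem:NtD}) yields the claimed inequality. The proof is essentially bookkeeping; the only subtle point is arranging the cutoffs $\chi, \widetilde\chi$ so that $\supp \chi \subset \Omega_R$ and $\widetilde\chi \equiv 1$ on $\Omega_{R+1},$ which ensures the interior $L^2$ bound on $u$ over $\Omega_{R+1}$ feeds cleanly into the Rellich identity on the slightly smaller region $\Omega_R.$
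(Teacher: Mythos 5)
Your argument is correct and is exactly the one-line proof the paper gives: combine Theorem~\ref{theorem:NtD} (which supplies the $|k|^{-\alpha}$ interior $L^2$ bound and a boundary $L^2$ bound) with the Rellich-type inequality of Lemma~\ref{lem:H1L2}, then observe that $|k|^{2-2\alpha} = |k|^{1-\beta}$ dominates the boundary contribution. The bookkeeping with the cutoffs $\chi$, $\widetilde\chi$ and the dominance check $2-2\alpha \geq -\beta$ are sound.
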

\begin{proof}
This follows from combining the bounds in Theorem \ref{theorem:NtD} with the result of Lemma \ref{lem:H1L2}.
\end{proof}

\begin{corollary}\label{cor:NtD2}
With notation as above, we have
$$
\norm{\gamma_+ u}_{H^1_k} \lesssim \abs{k}^{1-\beta}
\norm{g_N}_{L^2},\quad \abs{k}>k_0.
$$
\end{corollary}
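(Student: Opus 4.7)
The plan is to exploit the identity $\norm{\gamma_+ u}^2_{H^1_k(\Gamma)} = \norm{\nT \gamma_+ u}^2_{L^2(\Gamma)} + |k|^2 \norm{\gamma_+ u}^2_{L^2(\Gamma)}$ and bound each of the two pieces by $|k|^{1-\beta}\norm{g_N}_{L^2}$ separately. The $L^2$ piece is essentially free: the boundary bound in Theorem~\ref{theorem:NtD} gives $\norm{\gamma_+ u}_{H^\beta_k(\Gamma)} \lesssim \norm{g_N}_{L^2}$, which by the equivalent weighted norm yields $|k|^\beta \norm{\gamma_+u}_{L^2} \lesssim \norm{g_N}_{L^2}$; multiplying by $|k|^{1-\beta}$ gives precisely the desired estimate on $|k|\norm{\gamma_+u}_{L^2}.$

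For the tangential gradient piece, I would invoke the Ne\v cas-type Rellich estimate of Lemma~\ref{lem:Necas}(ii) with $f=0$ and $\partial_n^+u = g_N$, yielding
\begin{equation*}
\norm{\nT \gamma_+ u}^2_{L^2(\Gamma)} \lesssim \norm{g_N}^2_{L^2(\Gamma)} + |k|^2 \norm{\gamma_+ u}^2_{L^2(\Gamma)} + \norm{\chi u}^2_{H^1_k(\Oe)}.
\end{equation*}
The first term is absorbed by $|k|^{2(1-\beta)}\norm{g_N}^2_{L^2}$ since $|k|\geq k_0$, and the second by the $L^2$ estimate just obtained. The third (and this is the genuinely interesting step) is controlled by Corollary~\ref{corollary:ntdshifted1}, which gives $\norm{\chi u}_{H^1_k(\Oe)} \lesssim |k|^{1-\alpha}\norm{g_N}_{L^2}$.

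The arithmetic then closes cleanly because of the key relation $2\alpha = 1+\beta$ noted after Lemma~\ref{lemma:neumannresolvent}: namely $2(1-\alpha) = 1-\beta \leq 2(1-\beta)$ (using $\beta\leq 1$), so that $|k|^{2(1-\alpha)} \lesssim |k|^{2(1-\beta)}$ for $|k|\geq k_0$. Combining the three bounds gives $\norm{\nT\gamma_+u}_{L^2(\Gamma)} \lesssim |k|^{1-\beta}\norm{g_N}_{L^2}$, and adding this to the $L^2$ bound yields the claim. I do not anticipate any serious obstacle; the whole proof is a bookkeeping exercise matching the interior $H^1_k$ bound (obtained by Lemma~\ref{lem:H1L2} in Corollary~\ref{corollary:ntdshifted1}) against the boundary regularity gain $H^\beta_k$ via the Rellich identity, and it is precisely the arithmetic identity $2\alpha-1=\beta$ that makes the exponents on the two sides match.
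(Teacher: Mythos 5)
Your proof is correct and follows exactly the paper's argument: apply Lemma~\ref{lem:Necas}(ii) with $f=0$ and $\partial_n^+u=g_N$, then control the $|k|\,\|\gamma_+u\|_{L^2}$ term via the boundary estimate of Theorem~\ref{theorem:NtD} and the domain term $\|\chi u\|_{H^1_k(\Omega_+)}$ via Corollary~\ref{corollary:ntdshifted1}, with the exponent arithmetic closing thanks to $2\alpha-1=\beta$. The paper states this more tersely but the logic and ingredients are identical.
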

\begin{proof}
By the second part of Lemma~\ref{lem:Necas}, we have
$$
\norm{\gamma_+ u}_{H^1_k} \lesssim \norm{g_N} + k \norm{\gamma_+ u} + \norm{\chi_R u}_{H^1_k},
$$
hence the results follows from the estimates on the second and third
terms given above in Theorem~\ref{theorem:NtD} and
Corollary~\ref{corollary:ntdshifted1} respectively. 
%Note that $1-\beta>1-\alpha$ in both the general and positively
%curved cases.
\end{proof}

\bpf[Proof of Theorem \ref{thm:2}]
The bound \eqref{eq:NtD2} follows from combining the bounds in
Corollaries \ref{corollary:ntdshifted1} and \ref{cor:NtD2} with Lemma
\ref{lem:Necas} (note that $1-\beta>1-\alpha$ in both the general and
positive curvature cases). The bound \eqref{eq:NtD1} then follows by Lemma \ref{lem:inter}.
\epf

\section{The interior impedance problem}\label{sec:IIP}

\subsection{Motivation}
\label{sec:iip-mot}

For readers unfamiliar with the numerical analysis literature on the Helmholtz equation, we explain in this section why the interior impedance problem is of interest to numerical analysts (independent from the fundamental role it plays in the theory of integral equations for exterior problems, which we discuss in \S\ref{sec:1-3} and \S\ref{sec:int}). 

The majority of research effort concerning numerical methods for Helmholtz problems is focused on solving scattering/exterior problems in 2- or 3-d (such as the exterior Dirichlet and Neumann problems considered in \S\ref{sec:DtN} and \S\ref{sec:NtD}).
Boundary integral equations (BIEs) are in many ways ideal for this task, since they reduce a $d$-dimensional problem on an unbounded domain to a $(d-1)$-dimensional problem on a bounded domain. However there is still a very large interest in domain-based (as opposed to boundary-based) methods such as the finite element method, partly because these are usually much easier to implement than BIEs and partly because these domain-based methods usually generalize to the case when $k$ is variable (as occurs, for example, in seismic-imaging applications). 

When solving scattering problems with domain-based methods, one must come to grips with unbounded nature of the domain. This is normally done by
truncating the domain: one chooses a (large) bounded domain
$\tOmega \supset \Oi$, imposes a boundary condition on $\partial
\tOmega$, and then solves the BVP in
$\tOmega\setminus\overline{\Oi}$. If $\tOmega$ is a ball, one can
choose the boundary condition on $\partial \tOmega$ such that the
solution to the BVP in $\tOmega\setminus\overline{\Oi}$ is precisely
the restriction of the solution to the scattering problem---one does
this by using the explicit expression for the solution of the
Helmholtz equation in the exterior of a ball, and the relevant
boundary condition on $\partial \tOmega$ involves the so-called
Dirichlet-to-Neumann operator (see, e.g., \cite[\S3.2]{Ih:98} for more
details). Alternatively one can impose approximate boundary conditions
(often called \emph{absorbing boundary conditions} or
\emph{non-reflecting boundary conditions} since their goal is to
absorb any waves hitting $\partial \tOmega$ instead of reflecting them
back into $\tOmega$), the simplest such one being $\dudnw -\ri k u =0$
on $\partial \tOmega.$  This can be viewed this as an approximation to the
radiation condition \eqref{eq:src}.

Therefore, in the simplest case, truncating a Helmholtz BVP in an unbounded domain yields a BVP for the Helmholtz equation in the  annulus-like region $\tOmega\setminus \overline{\Oi}$, with an impedance boundary condition on  $\partial \tOmega$, and either a Dirichlet or Neumann boundary condition on $\Gamma$. Without a $k$-explicit bound on the solution of this BVP, a fully $k$-explicit analysis of any numerical method is impossible, and therefore the problem of finding $k$-explicit bounds on the solution of this truncated problem was considered in \cite{He:07}, \cite{ShWa:05}.

Going one step further, although the geometry of the scatterer plays an important role in determining the behaviour of the solution, many features of numerical methods for the Helmholtz equation (such as whether the so-called pollution effect occurs) can be investigated without the presence of a scatterer at all; this then leads to considering the Helmholtz equation posed in a bounded domain with an impedance boundary condition, i.e., the IIP (and the impedance boundary condition can then be viewed as a way of ensuring that the solution of the BVP is unique for all $k$). The problem of finding $k$-explicit bounds on the solution of the IIP was therefore considered in \cite{FeSh:94}, \cite{Me:95}, \cite{CuFe:06}, \cite{EsMe:12}, and \cite{Sp:14}.

Midway between, in some sense, the truncated scattering problem and the IIP are BVPs posed on bounded domains, where impedance boundary conditions (or more sophisticated absorbing boundary conditions) are posed on part of the boundary, and Dirichlet or Neumann boundary conditions are posed on the rest. The most commonly-studied such problem is the Helmholtz equation in a rectangle with impedance boundary conditions on one side and Dirichlet boundary conditions on the other three, motivated by the physical problem of scattering by a half plane with a rectangular indent (or ``cavity"). Bounds on this problem were obtained in \cite{BaYuZh:12} and \cite{LiMaSu:13}, and the recent paper \cite{DuLiSu:15} seeks to determine the optimal dependence on $k$ via numerical experiments.

\subsection{Interior impedance estimates}
\label{sec:iip-est}

We begin with a result about uniqueness of solutions of the IIP for
complex values of the spectral parameter $k.$

\ble[Uniqueness of the IIP]\label{lem:IIP1}
Consider the IIP \eqref{eq:IIP} with 
\beq\label{eq:5eta}
\eta(\bx) = \notalpha(x) k + \ri \notbeta(x),
\eeq
where $\notalpha,\notbeta$ are real-valued $C^\infty$ functions on $\Gamma$.

\bit
\item[(i)] If there exists an $\notalpha_->0$ such that 
\beq\label{eq:51}
\notalpha(x)\geq \notalpha_- >0 \quad\tfa x\in \Gamma,
\eeq
and $\notbeta(x)\geq0$ on $\Gamma$, then the solution of the IIP is unique for all $k\neq 0$ with $\Im k \geq 0$. 
\item[(ii)] If there exists an $\notalpha_->0$ such that \eqref{eq:51} holds and there also exists a $\notbeta_->0$ such that 
\beq\label{eq:51a}
\notbeta(x)\geq \notbeta_- >0 \quad\tfa x\in \Gamma,
\eeq
then the solution of the IIP is unique for all $k$ with $\Im k\geq 0$ (i.e. we now also have uniqueness when $k=0$).
\eit
\ele

\bpf
If $u$ is the solution of the homogeneous IIP (i.e. $f=0$ and $g=0$) then applying Green's identity and using the impedance boundary condition we find that
\beqs
\ri k \int_\Gamma \notalpha |\gamma u|^2 - \int_\Gamma \notbeta |\gamma u|^2 - \int_\Omega \ngus + k^2 \int_\Omega \nus=0.
\eeqs
Therefore, taking real and imaginary parts, and writing $k= k_R+ \ri k_I$ with $k_R, k_I\in \Rea$, we have
\beq\label{eq:52}
-k_I \int_\Gamma \notalpha  |\gamma u|^2 - \int_\Gamma \notbeta |\gamma u|^2 - \int_\Omega \ngus + (k_R^2 - k_I^2) \int_\Omega \nus=0,
\eeq
and 
\beq\label{eq:53}
k_R \int_\Gamma \notalpha  |\gamma u|^2 + 2k_R k_I \int_\Omega \nus=0
\eeq
Proof of (i): if $k_R\neq 0$ and $k_I\geq 0$, then 
using the assumption \eqref{eq:51} on $\notalpha$ in \eqref{eq:53} we see that $\gamma u=0$. The impedance boundary condition then implies that $\dnu=0$, and thus Green's integral representation (see, e.g., \cite[Theorem 7.5]{Mc:00}) implies that $u=0$ in $\Omega$. If $k_R=0$ and $k_I>0$, then using both the assumption \eqref{eq:51} on $\notalpha$ and the assumption that $\notbeta$ is non-negative in \eqref{eq:52}, we see that $u=0$ in $\Omega$.

Proof of (ii): from Part (i) we only need to consider the case when $k=0$. Using the assumption \eqref{eq:51a} in \eqref{eq:52}, we see that $\gamma u=0$ on $\Gamma$, and then $u=0$ in $\Omega$ follows from the steps above.
\epf

We now prove Theorem \ref{thm:3} by employing the 
estimates of Bardos--Lebeau--Rauch \cite{BaLeRa:92} for the wave
equation with the damping boundary condition, i.e. 
\begin{subequations}\label{eq:BLR1}
\begin{align}
\Box v &=0 \text{ on } \Omega,\\
(\pa_n+\notalpha\gamma\pa_t+\notbeta\gamma) v &=0 \text{ on } \Gamma\label{eq:BLR1b}
\end{align}
\end{subequations}
where $\notalpha, \notbeta$ are smooth, real-valued functions on $\Gamma$ with $\notalpha$ strictly positive and $\notbeta$ nonnegative.

First we give a short proof of the standard energy estimate for the wave equation, but now considering the boundary condition \eqref{eq:BLR1} instead of the usual Dirichlet or Neumann ones.

\begin{lemma}\label{lemma:impedanceenergy}
Let $F\in L^2(\RR\times \Omega)$ and $G\in L^2(\RR\times \Gamma)$ be supported in $t>0$ and let $v$ solve
\begin{align*}
\Box v&=F\text{ on } \Omega,\\
(\pa_n+\notalpha\gamma\pa_t+\notbeta\gamma) v&=G \text{ on } \Gamma,\\
v&=0 \text{ for } t\leq 0,
\end{align*}
where $\notalpha, \notbeta$ are smooth, real-valued functions on $\Gamma$ with $\notalpha$ strictly positive and $\notbeta$ nonnegative.
Then for any $T$
$$
\norm{v_t}^2+ \norm{\nabla v}^2 + \norm{\notbeta^{1/2}\gamma v}^{2} \rvert_{t=T} \leq C_T
\big(\norm{F}^2_{L^2([0,T]\times \Omega)}+\norm{G}^2_{L^2([0,T]\times \Gamma)}\big).
$$
\end{lemma}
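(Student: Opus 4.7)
The plan is to carry out the classical energy estimate, with the damping boundary condition playing the role of dissipation that absorbs the boundary forcing. Define the natural energy
\[
E(t) := \tfrac{1}{2}\int_\Omega \bigl(\abs{v_t}^2 + \abs{\nabla v}^2\bigr)\,dx + \tfrac{1}{2}\int_\Gamma \notbeta\,\abs{\gamma v}^2\,dS.
\]
Since $\notbeta\geq 0$, this is a nonnegative quantity that dominates the left-hand side of the claimed bound at $t=T$. Because $v\equiv 0$ for $t\leq 0$, we have $E(0)=0$.

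The first step would be to compute $\frac{d}{dt}E(t)$ for smooth enough $v$. Multiplying the wave equation by $\overline{v_t}$ (or, treating $v$ as real, by $v_t$), integrating by parts in $\Omega$ and using $\Box v=F$, one obtains
\[
\frac{d}{dt}E(t)= \Re\int_\Omega v_t\,\overline{F}\,dx + \Re\int_\Gamma \gamma v_t\,\overline{\bigl(\pa_n v + \notbeta\,\gamma v\bigr)}\,dS.
\]
Then I would substitute the boundary condition in the form $\pa_n v+\notbeta\,\gamma v = G -\notalpha\,\gamma v_t$, which converts the boundary integrand into $\gamma v_t\,\overline{G} - \notalpha\,\abs{\gamma v_t}^2$. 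This is where the sign hypothesis on $\notalpha$ does its job: the last term is $\leq 0$ and gives genuine dissipation.

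The second step is to absorb the boundary forcing using the damping term. Applying a weighted Cauchy--Schwarz,
\[
\Bigl\lvert\int_\Gamma \gamma v_t\,\overline{G}\,dS\Bigr\rvert \leq \tfrac{1}{2}\int_\Gamma \notalpha\,\abs{\gamma v_t}^2\,dS + \tfrac{1}{2\notalpha_-}\int_\Gamma \abs{G}^2\,dS,
\]
which, combined with the inequality $\Re\int_\Omega v_t\,\overline{F}\leq \tfrac{1}{2}\|v_t\|^2+\tfrac{1}{2}\|F(t)\|^2$, yields a differential inequality of the form
\[
\frac{d}{dt}E(t) \leq E(t) + \tfrac{1}{2}\|F(t)\|_{L^2(\Omega)}^2 + \tfrac{1}{2\notalpha_-}\|G(t)\|_{L^2(\Gamma)}^2.
\]
Gronwall's lemma with $E(0)=0$ then integrates this over $[0,T]$ to produce the claimed estimate with some constant $C_T$ (of order $e^T$).

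The one genuinely technical point, and the main obstacle, is justifying the a priori computation for the regularity class in which $v$ is given. A priori $v_t$ need not have an $L^2$ trace on $\Gamma$, so the boundary integrals above must be interpreted carefully. I would handle this by the standard approximation procedure for the wave equation with dissipative boundary conditions: either mollify $F$ and $G$ in time (preserving their support in $t>0$) to obtain strong solutions for which all traces are meaningful and the identity above is rigorous, then pass to the limit using the just-derived estimate as an a priori control; or alternatively invoke the general well-posedness and energy inequality framework for first-order hyperbolic systems with maximal dissipative boundary conditions (e.g., as in Lions--Magenes, or as used in \cite{BaLeRa:92}), where exactly this type of estimate is established. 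Either route is routine given the strict positivity of $\notalpha$, which makes the boundary condition maximal dissipative.
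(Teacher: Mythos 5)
Your proposal is correct and follows essentially the same route as the paper: the same energy functional (including the $\int_\Gamma \notbeta|\gamma v|^2$ boundary term), the same multiplier $v_t$, the same use of the boundary condition to produce the dissipative $-\notalpha|\gamma v_t|^2$ term absorbing $G$ via weighted Cauchy--Schwarz, and Gronwall to conclude. Your added remarks on justifying the computation by mollification/approximation are a reasonable supplement; the paper's proof carries out the identical computation without dwelling on that regularity point.
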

\begin{proof}
Without loss of generality we can assume that $F$ and $G$ are both real. Multiplying $\Box v=F$ with $v_t$ and integrating over $\Omega$ we find 
\begin{align}\label{eq:5-1}
\pdiff{}{t}\left( 
\int_\Omega \big(|\gv|^2 + (v_t)^2\big) + \int_\Gamma \notbeta (\gamma v)^2
\right)
= -\int_\Gamma \notalpha (\gamma v_t)^2 + \int_\Gamma G\, \gamma v_t + \int_\Omega F \, v_t.
\end{align}
Using the Cauchy-Schwarz inequality on the second term on the RHS of
\eqref{eq:5-1} and recalling that $\notalpha$ is strictly positive, we
see that we can bound the first two terms by a multiple of $\int_\Gamma G^2$. The other term on the RHS of \eqref{eq:5-1} is bounded by $\half(\int_\Omega F^2 + \int_\Omega (v_t)^2)$, and the result then follows from Gronwall's inequality (see, e.g., \cite[\S7.2.3]{Ev:98}), using the fact that $\notbeta\geq0$.
\end{proof}

In the proof of Theorem~\ref{thm:3} below, the crucial microlocal
ingredient will be the estimates on the wave equation with impedance
boundary condition obtained by Bardos--Lebeau--Rauch~\cite{BaLeRa:92}.
These estimates involve a key geometric hypothesis, which is that
every generalized bicharacteristic in the sense of
Melrose--Sj{\"o}strand~\cite{MeSj:82} eventually hits the boundary
(or, in the more general setting of \cite{BaLeRa:92}, the control
region) at
a point that is \emph{nondiffractive} as defined in
\cite[p.1037]{BaLeRa:92}.\footnote{Note that the negation of
  ``nondiffractive'' in this sense is not the same as ``diffractive''
  in the sense of \cite{MeSj:82}.} In our simple case of compact Euclidean
domains, we remark that these
hypotheses are always satisfied:

\begin{lemma}
  \label{lemma:nondiffractive}
  If $\Omega_{-}\subset \reals^{n}$ is a compact domain with smooth
  boundary, then every generalized bicharacteristic eventually hits
  the boundary at a nondiffractive point.
\end{lemma}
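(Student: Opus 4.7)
The plan is to proceed via the Melrose--Sj\"ostrand \cite{MeSj:82} classification of generalized bicharacteristics, combined with the compactness of $\Omega_-$. The key point is that in the Bardos--Lebeau--Rauch terminology, the ``nondiffractive'' set at the boundary contains both the hyperbolic set and the gliding set, and only excludes the (MS) diffractive glancing points. Our task thus reduces to showing that no generalized bicharacteristic has all of its boundary encounters at MS-diffractive tangent points.

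First, since $\Omega_-$ is compact, any interior segment of a generalized bicharacteristic projects to a straight line of length at most $\diam(\Omega_-)$. Parametrizing so that the spatial projection moves at unit speed, this forces the bicharacteristic to meet $\Gamma$ in forward time within $\diam(\Omega_-)$. Classify this first encounter according to \cite{MeSj:82}: if the meeting point lies in the hyperbolic set (transversal intersection) we are done; if it is a glancing point of gliding type, it is also nondiffractive in the BLR sense and we are done. The only remaining case is an MS-diffractive encounter, in which the bicharacteristic leaves the boundary tangentially and continues in the interior as a straight line.

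In the remaining case, we iterate: after the diffractive encounter the bicharacteristic is again an interior straight line segment of length at most $\diam(\Omega_-)$, so it must meet $\Gamma$ again in finite forward time. The essential remaining step is to rule out an infinite sequence of consecutive MS-diffractive encounters. At a diffractive encounter the tangent direction at the incoming and outgoing point coincides with the tangent plane of $\Gamma$; thus, a chain of such encounters produces a sequence of chords of $\Omega_-$ that are simultaneously tangent to $\Gamma$ at both endpoints (``bitangent'' chords). The tangency condition is codimension one at each endpoint, and by the smoothness and compactness of $\Gamma$ in $\reals^n$ a bicharacteristic which is forced onto bitangent chords indefinitely violates the transport structure of Melrose--Sj\"ostrand generalized bicharacteristic flow---indeed, the flow forces the bicharacteristic off the set of tangential points at some hit, producing a hyperbolic reflection, which is nondiffractive.

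The main obstacle is the final step: rigorously excluding the pathological case of an infinite all-diffractive sequence of bitangent reflections. For strictly convex $\Omega_-$ this is immediate because no straight line can be tangent to a strictly convex smooth hypersurface at two distinct points, so that at most a single diffractive hit can occur before a hyperbolic one; in the general compact smooth case one appeals to the dimensional count for the tangency locus in $T^*\Gamma$ and the structure of generalized bicharacteristics to conclude that no bicharacteristic is trapped in the diffractive locus. This yields the desired nondiffractive hit in finite forward time.
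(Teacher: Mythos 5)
The key step in your proposal---ruling out an infinite sequence of consecutive MS-diffractive encounters---is not actually carried out, and the appeal to a ``dimensional count for the tangency locus'' does not by itself give a contradiction: a bicharacteristic is a single curve, and there is no \emph{a priori} reason it cannot repeatedly meet a codimension-one subset of $T^*\Gamma$. Your closing assertion that ``the flow forces the bicharacteristic off the set of tangential points at some hit, producing a hyperbolic reflection'' is precisely the claim that needs proof, not a justification of it. (The special case of a strictly convex $\Omega_-$ is also a red herring here: for the interior of a strictly convex domain there are \emph{no} diffractive glancing points at all---all glancing is of gliding type---so the ``at most one diffractive hit before a hyperbolic one'' reasoning never applies.)

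What closes the gap cleanly, and is the route the paper takes, is to track \emph{momentum} rather than to enumerate boundary hits. In $\mathbb{R}^n$ the Hamilton vector field $H_p$ is constant in the fibre variable, so along any stretch of a generalized bicharacteristic that flows by $H_p$ the momentum $\xi$ is frozen; this includes interior segments \emph{and} diffractive glancing points (where Melrose--Sj{\"o}strand flow is the free flow) \emph{and} glancing points of tangency order strictly greater than two (where the gliding field $H^G_p$ coincides with $H_p$). Consequently, if a generalized bicharacteristic never passes through a hyperbolic point or through a gliding point in $\gla^2$, its momentum never changes, its spatial projection is a straight line at constant speed, and it exits the compact set $\Omega_-$ in finite time---a contradiction. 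Since hyperbolic points and gliding points in $\gla^2\setminus\gla_d$ are nondiffractive in the Bardos--Lebeau--Rauch sense (for the latter, the second derivative of the boundary defining function along the flow is strictly negative), the claim follows. Note also that your bookkeeping treats boundary contact as a sequence of discrete ``hits,'' which does not account for gliding segments that remain on $\Gamma$ for an interval of time; the momentum argument handles this uniformly.
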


\begin{proof}
  We first observe that a generalized bicharacteristic in a compact
  Euclidean domain must eventually change momentum.  Adopting the
  notation of H{\"o}rmander~\cite[Definition 24.3.7]{Ho:83}, we claim
  that the only way the momentum can change along a generalized
  bicharacteristic is when it hits the boundary at a point in $\hyp
  \cup \gla \setminus \gla_{d}$.  Here $\hyp$ denotes the ``hyperbolic
  points'' at which there is transverse reflection from the boundary,
  while $\gla \setminus \gla_{d}$ denotes the set of glancing points
  that are not diffractive.  To prove this assertion, we note that in
  the interior and at diffractive points (which together constitute
  the remaining parts of the characteristic set), we have $\gamma'(t)
  = H_{p}(\gamma(t))$, where $\gamma$ denotes the bicharacteristic and
  $H_{p}$ the Hamilton vector field, which in this case is the
  constant vector field $\xi \cdot \pd[x]$ in $T^{*}\reals^{n}$ (cf.\
  Chapter 24 of \cite{Ho:83}).

  Now we further note that on $\gla \setminus \gla_{d}$, we have
  $\gamma '(t) = H_{p}^{G}(\gamma (t))$ with $H_{p}^{G}$ the ``gliding
  vector field'' of Definition~{24.3.6} in \cite{Ho:83}.  This vector
  field \emph{still} agrees with $H_{p}$ unless $\gamma(t) \in \gla^{2}$, the
  points where contact with the boundary is exactly second-order.  On
  the other hand, the ``gliding points,'' $\gla_{g} \equiv
  \gla^{2}\setminus \gla_{d}$, are nondiffractive by the definition of
  Bardos--Lebeau--Rauch, since the second derivative of the boundary
  defining function is strictly negative along the flow at such points
  (cf.\ Definition~{24.3.2} of \cite{Ho:83}).
\end{proof}

\begin{proof}[Proof of Theorem \ref{thm:3}]
We begin by dealing with the case when $\notalpha$ is positive.
 By \cite{BaLeRa:92}, if $v$ satisfies \eqref{eq:BLR1} with initial data
  in the energy space, then all energy norms of $v$ enjoy exponential decay
  as $t \to \infty$.
%(Theorem~5.5 and Proposition~5.3 of \cite{BaLeRa:92}).  
Indeed, \cite[Theorem~5.5 and Proposition~5.3]{BaLeRa:92} prove this result
for the case when $\notbeta$ is nonnegative, and then the result for
$\notbeta\equiv 0$ follows from \cite[Theorem~5.6]{BaLeRa:92}, but we
emphasize that in this latter case it is \emph{just} the energy norm
$$
\norm{v_t}^2+\norm{\nabla v}^2+\smallnorm{\notbeta^{1/2} \gamma v}^2
$$
that converges to zero, while the value of the solution may converge to a
nonzero constant, since this norm does not in general control the
$L^2$ norm.

%We also note that \cite{BaLeRa:92} separate out the case $\notbeta=0

We let $v_\kappa$ denote the (unique) solution to the wave equation on
$\RR\times \Omega_-$ satisfying 
\begin{subequations}\label{eq:wave_shift}
\begin{align}
\Box v_\kappa &= \re^{-\ri\kappa t} \varphi(t) f,\\
(\pa_n+\notalpha\gamma\pa_t+\notbeta\gamma)v_\kappa &= \re^{-\ri\kappa t} \varphi(t) g,\\
v_\kappa(t,x)&=0,\quad t<0.
\end{align}
\end{subequations}
where $\varphi$ is a cutoff compactly supported in $(0,1)$ with $\int \varphi=1.$
Then the standard energy estimate
proved in
Lemma~\ref{lemma:impedanceenergy} yields 
%energy decay for short times:
$$
\norm{(v_\kappa)_t}^2+\norm{\nabla v_\kappa}^2+ \smallnorm{\notbeta^{1/2}
  \gamma v_{\kappa}}^2\big \rvert_{t=1}\lesssim \N{f}^2_{L^2(\Omega)}+\N{g}^2_{\LtG}.
$$
Now since $v_{\kappa}$ satisfies the homogeneous wave equation for $t
\geq 1$ with initial data at $t=1$ controlled as above,
\cite[Theorem~5.5]{BaLeRa:92}  yields, for some $\delta>0,$
\beq\label{eq:BLR2}
\norm{(v_\kappa)_t}^2+\norm{\nabla v_\kappa}^2 +
\smallnorm{\notbeta^{1/2} \gamma v_{\kappa}}^2
\leq C \re^{-\delta t}
\big(\N{f}^2_{L^2(\Omega)}+\N{g}^2_{\LtG}\big),\quad t>0.
\eeq
Fourier transforming \eqref{eq:wave_shift} gives 
\begin{align*}
(\Lap+k^2)\mathcal{F}^{-1} v_\kappa&=-\widehat \varphi(k-\kappa) f,\\
(\pa_n- \ri k\notalpha\gamma+\notbeta\gamma)
\mathcal{F}^{-1} v_\kappa &= \widehat \varphi(k-\kappa) g.
\end{align*}
Since
\beqs
\N{\cF^{-1} v}_{L^2_x} \leq \N{v}_{L^1_t\, L^2_x}
\eeqs
the exponential decay estimate \eqref{eq:BLR2} implies that
$$
\norm{\nabla \mathcal{F}^{-1} v_\kappa}+\abs{k} \norm{\nabla \mathcal{F}^{-1} v_\kappa}\lesssim \norm{f}+\norm{g}, \quad k\in \RR;
$$
here we have made no use of the boundary term on the LHS of \eqref{eq:BLR2}.
If the stronger Assumption~\ref{ass:eta2} holds, we employ the more precise
version of our Fourier transformed estimates:
$$
\norm{\nabla \mathcal{F}^{-1} v_\kappa}^2+\abs{k}^2 \norm{ \mathcal{F}^{-1}
  v_\kappa}^2+ \smallnorm{\notbeta^{1/2} \gamma \mathcal{F}^{-1} v_\kappa}^2
\lesssim \norm{f}^2+\norm{g}^2,\ k \in \RR.
$$
By the Poincar{\'e}-Wirtinger inequality\footnote{Note that in
  employing the Poincar\'e-Wirtinger inequality, we may estimate the
  average value of $u$ by a multiple of $\norm{\nabla u} +
  \norm{\gamma u}$ by writing it as a multiple of $\int u\, \nabla
  \cdot x \, dx$ and integrating by parts.} and the positivity of
$b,$ the left side controls
$$
\norm{\nabla \mathcal{F}^{-1} v_\kappa}^2+\ang{k}^2 \norm{ \mathcal{F}^{-1}
  v_\kappa}^2
$$
 even at $k=0$, giving us
the stronger estimate (cf.\ discussion on
pps.1060--1061 of \cite{BaLeRa:92}):
$$
\norm{\nabla \mathcal{F}^{-1} v_\kappa}+ \ang{k} \norm{\mathcal{F}^{-1} v_\kappa}\lesssim \norm{f}+\norm{g},\ k \in \RR.
$$

Taking $\kappa=k$ makes $u\equiv v_k$ the solution of the IIP
\eqref{eq:IIP} and yields the asserted estimate \eqref{eq:thm3} when
$\notalpha$ is strictly
positive.  This concludes the proof for $\notalpha$ strictly positive.

When $\notalpha$ is strictly negative, the sign convention of the Fourier
transform and the signs of the exponents in \eqref{eq:wave_shift} can
both be changed to give the correspond estimate. (Alternatively, by
taking the complex conjugate of the BVP \eqref{eq:IIP}, we can prove
\eqref{eq:thm3} when the boundary condition \beqs (\pa_n + \ri k\notalpha
\gamma + \notbeta\gamma )u=g \eeqs is imposed. If $\notalpha$ is strictly
negative, then we apply the bound above with $\notalpha$ replaced by
$-\notalpha$, and this yields the desired result.)
\end{proof}

%This lemma contains the necessary energy estimates for the damped
%hyperbolic problem.

We now prove Corollary \ref{cor:ItD}, regarding the
impedance-to-Dirichlet map, by using Theorem \ref{thm:3} in
conjunction with a simple energy estimate.

\bpf[Proof of Corollary \ref{cor:ItD}]
Reiterating the integration by parts used to obtain
Lemma~\ref{lem:IIP1} but now including the inhomogeneities, we find
that applying Cauchy-Schwarz to our expression for the imaginary part
of $\int_\Omega f\,\overline{u}$ %$\ang{f,u}$ 
yields for $k \in \RR$
$$
k \norm{\sqrt{a} \gamma u}^2_{L^2(\Gamma)} \lesssim
\abs{\int_\Gamma g\, \overline{\gamma u}} 
+ \abs{\int_\Omega f\, \overline{u}}.
%\abs{\ang{g,\gamma u}}+\abs{\ang{u,f}}.
$$
Now applying Cauchy-Schwarz and the estimates of Theorem~\ref{thm:3}
to the resulting $\norm{u}$
term on the RHS gives the desired estimate on $k^2 \norm{\gamma u}^2.$

The corresponding estimate for $\nT (\gamma u)$ follows from the analogous
estimate to Lemma~\ref{lem:Necas}(ii) for bounded domains (the same proof
employed by Ne\v{c}as applies).

If $b$ is strictly positive, we obtain the stronger estimate at $k=0$ by
examining the real rather than the imaginary part of
%$\ang{f,u}$ 
$\int_\Omega f\,\overline{u}$
to estimate $\int_\Gamma b\smallabs{\gamma u}^2.$
\epf

\bpf[Proof of Corollary \ref{cor:infsup}]
We follow the argument in, e.g., \cite[Theorem 2.5]{EsMe:12}, \cite[text between (3.3) and (3.4)]{ChMo:08}. The variational formulation of the IIP is 
\beq\label{eq:new1}
\text{find  } u \in H^1(\Omega) \text{  such that  } a(u,v) =F(v) \,\, \tfa v\in H^1(\Omega),
\eeq
where 
\beq\label{eq:sesqui}
a(u,v) := \int_\Omega \gu \cdot \overline{\gv} - k^2 u\,\overline{v} - \ri k \int_\Gamma \notalpha \,\gamma u \,\overline{\gamma v} + \int_\Gamma \notbeta \,\gamma u\,\overline{\gamma v},
\eeq
and 
\beq\label{eq:F}
F(v):= \langle f, v\rangle_\Omega + \langle g, \gamma v\rangle_\Gamma, %\int_\Omega f\,\overline{v} + \int_\Gamma g \, \overline{\gamma v}.
\eeq
where $\langle\cdot,\cdot\rangle_\Omega$ and $\langle\cdot,\cdot\rangle_\Gamma$ denote the duality pairings on $\Omega$ and $\Gamma$, respectively.
Define the sesquillinear form $a_0(\cdot,\cdot)$ by
\beq\label{eq:sesqui2}
a_0(u,v) := \int_\Omega \gu \cdot \overline{\gv} + k^2 u\,\overline{v} - \ri k \int_\Gamma \notalpha \,\gamma u \,\overline{\gamma v} + \int_\Gamma \notbeta \,\gamma u\,\overline{\gamma v}.
\eeq
Furthermore, define $u_0\in H^1(\Omega)$ as the solution of the variational problem $a_0(u_0,v)= F(v)$ for all $v\in H^1(\Omega)$, and define $w\in H^1(\Omega)$ as the solution of the variational problem $a(w,v)= 2k^2 \int_\Omega u_0\, \overline{v}$ for all $v\in H^1(\Omega)$. These definitions imply that the solution of \eqref{eq:new1} satisfies $u=u_0+w$.

Since $\notbeta$ is nonnegative, $\Re a_0(v,v) = \N{v}^2_{H^1_k(\Omega)}$; thus, by the Lax--Milgram lemma, $\N{u_0}_{H^1_k(\Omega)}\lesssim \N{F}_{(H^1_k(\Omega))'}$. The definition of $w$ implies that $w$ satisfies the IIP with $g=0$ and $f= 2k^2 u_0$, and thus the bound \eqref{eq:thm3} implies that $\N{w}_{H^1_k(\Omega)}\lesssim k^2 \N{u_0}_{L^2(\Omega)}$. Combining these bounds on $u_0$ and $w$, we obtain
\beq\label{eq:new2}
\N{u}_{H_k^1(\Omega)} \lesssim |k|\N{F}_{(H^1_k(\Omega))'}.
\eeq
The result on the inf-sup constant \eqref{eq:infsup2} then follows from, e.g., \cite[Theorem 2.1.44]{SaSc:11}. The bound \eqref{eq:infsup} follows from \eqref{eq:new2} using the definition of $F$ \eqref{eq:F}.
\epf

\bpf[Proof of Corollary \ref{cor:H2}]
The bound \eqref{eq:H2} follows from combining the bounds \eqref{eq:thm3} and 
\beqs
\N{u}_{H^2(\Omega)} \lesssim \N{\Delta u}_{L^2(\Omega)} + \N{u}_{H^1(\Omega)} + \N{\dnu}_{H^{1/2}(\Gamma)},
\eeqs
where the latter is proved in, e.g., \cite[Theorem 2.3.3.2, page 106]{Gr:85}.
\epf

We now impose the \emph{homogeneous} impedance boundary condition, and
consider the operator $\Rimp(k):L^2(\Omega) \to L^2(\Omega)$ defined
by $\Rimp(k) f=u$ where
$u$ is the solution to $(\Lap+k^2)u=f$ satisfying
$(\partial_n-\ri \eta\gamma) u=0.$

Following the discussion in \S\ref{sec:introduction}, we now proceed with
the assumptions that $\notalpha$ and $\notbeta$ are both strictly positive
(i.e. \eqref{eq:51} and \eqref{eq:51a} hold), so that $\Rimp(k)$ is well defined for all $\Im k\geq 0$.

We break the proof of Theorem \ref{thm:pole} down into several steps; the first step is to prove that $\Rimp(k)$ is holomorphic on $\Im k> 0$.

\ble[Analyticity for $\Im k >0$]\label{lem:IIP2}
The operator family $\Rimp(k):\LtO\rightarrow \LtO$ with boundary condition
\beq\label{eq:54}
\dnu - \ri (k\notalpha + \ri \notbeta) \gamma u=0,
\eeq
where $\notalpha,\notbeta$ are real-valued $C^\infty$ functions with
$\notalpha$  strictly positive on $\Gamma$ and $\notbeta$ nonnegative, is holomorphic on $\Im
k>0$.
\ele

\bpf
First note that the standard variational formulation of the IIP satisfies a G\aa rding inequality. Indeed, the sesquilinear form is given by \eqref{eq:sesqui}
and so, since $\notbeta$ is non-negative and $\Im k>0$, we have
\beqs
\Re a(v,v) + (1 +k^2) \N{v}^2_{\LtO} \geq \N{v}^2_{H^1(\Omega)}
\eeqs
(note that we are using the unweighted norm on $H^1(\Omega)$ since we are allowing for $k$ to be equal to zero).
Fredholm theory then gives us well-posedness of the BVP as a consequence of the uniqueness result in Lemma \ref{lem:IIP1} (see, e.g., \cite[Theorems 2.33, 2.34]{Mc:00}).
Analyticity follows by applying the Cauchy-Riemann
operator $\pa/\pa \overline{k}$ to the equations
$(\Lap+k^2)u=f$ and $\dnu - \ri (k\notalpha + \ri \notbeta) \gamma
u=0:$ we find that $\pa u /\pa \overline{k}$ must satisfy
the IIP with zero interior and boundary data, hence by the uniqueness
proved above, it must vanish.
\epf

We now use a simple perturbation argument to get the existence of a
pole-free strip beneath the real axis.

%We now combine the previous two lemmas to prove Theorem \ref{thm:pole}.

\bpf[Proof of Theorem \ref{thm:pole}] Lemma \ref{lem:IIP2} states that
$\Rimp(k)$ is holomorphic on $\Im k> 0$, while Theorem~\ref{thm:3}
yields the estimate \eqref{eq:55} for all $k \in \RR$ (crucially using
Assumption~\ref{ass:eta2}).  We can now perturb using this estimate to
extend to analyticity below the real axis, but we will need to
consider the full inverse map on both interior and boundary data (and
in so doing, will in fact prove a stronger result than stated,
involving both interior and boundary inhomogeneities).  For the
(unique) solution of the IIP
$$
(\Lap+k^2) u=f,\quad (\pa_n -\ri k\notalpha
\gamma + \notbeta\gamma )u=g
$$
we set 
$$
\begin{pmatrix}
u \\ \gamma u
\end{pmatrix}=
\tRimp(k) \begin{pmatrix}
f\\ g
\end{pmatrix}.
$$
Then Corollary~\ref{cor:ItD} shows that for $k \in \RR,$
$$
\tRimp(k): L^2(\Omega) \oplus L^2(\Gamma) \to H^1_k(\Omega) \oplus H^1_k(\Gamma).
$$

Now for $z \in \CC$ we may try to solve
$$
(\Lap+(k+z)^2) u=f,\quad (\pa_n - \ri (k+z)\notalpha
\gamma + \notbeta\gamma )u=g
$$
by perturbation; we easily see that this is equivalent to
$$
(\Lap+k^2) u = f-(2kz+z^2) u,\quad (\pa_n - \ri k\notalpha
\gamma + \notbeta\gamma )u=g+\ri z \notalpha \gamma u.
$$
Hence, applying $\tRimp(k),$ we wish to solve
$$
\begin{pmatrix}
u \\ \gamma u 
\end{pmatrix}=
\tRimp(k) \begin{pmatrix}
f-(2kz+z^2)  u\\
g+\ri z \notalpha \gamma u
\end{pmatrix} =
\tRimp(k) \begin{pmatrix} f \\ g \end{pmatrix} - \tRimp(k) M(z) \begin{pmatrix}  u \\
  \gamma u
\end{pmatrix},
$$
where
$$
M(z) =\begin{pmatrix}
2kz +z^2 &0 \\ 0 & -\ri z\notalpha
\end{pmatrix}
$$
We can solve this by Neumann series (hence for a holomorphic solution
with the same $k$-dependent estimates as on the real axis) so
long as, say,
$$
\norm{\tRimp(k) M(z)}_{L^2\oplus L^2 \to L^2\oplus L^2}<1/2.
$$
Since $\tRimp(k)$ has norm bounded by $C \ang{k}^{-1}$ on $L^2 \oplus
L^2,$ this requires only that $\smallabs{z} \leq \epsilon$ for some
$\epsilon>0.$  Restricting to the case $g=0$ gives the stated result.
\epf

\ble[Sharpness of \eqref{eq:thm3} when $f=0$ and $\Omega$ is a ball]\label{lem:ball}
In $\RR^d$ for any $d\geq 2$ there exist families of solutions $u$ to
the interior impedance problem in the unit ball $B^d$ with boundary
inhomogeneity $g:$
\beq
\Delta u + k^2 u = 0 \quad\mbox{ in } B^d \quad\tand\quad
\dnu - \ri \eta \gamma u = g \quad \mbox{ on } S^{d-1}
\eeq
with 
$$
k\norm{u}_{L^2(B^d)}\gtrsim \norm{g}_{L^2(S^{d-1})}.
$$
\ele
\begin{proof}
Fix any spherical harmonic $\varphi(\theta)$ on $S^{d-1}_\theta$ with
eigenvalue $-\mu^2.$  Then the function
$$
u(r,\theta)\equiv r^{1-d/2} J_\nu(kr)\varphi(\theta)
$$
solves the Helmholtz equation in $B^d$ if we set
$$
\nu=\frac 12 \sqrt{(d-2)^2+ 4\mu^2}.
$$
We will let $k\to \infty$ while letting $\mu$ (and hence $\nu$) remain
fixed.

The function $u$ thus satisfies the
IIP (with $\eta=k$) where
$$
g\equiv (\pa_r-\ri k)u\lvert_{r=1}.
$$

Now we let $k \to \infty$ and examine the asymptotics of $u$ and $g.$
Since (see, e.g., \cite[Equation 10.17.3]{Di:13} for the standard Bessel function asymptotics
employed here)
$$
u = \varphi (\theta) r^{1-d/2} \sqrt{\frac 2{\pi kr}} \big(\cos \omega+O((rk)^{-1})\big)
$$
with
$$
\omega \equiv rk-\frac 12 \nu \pi-\frac 14 \pi
$$
we have
\beq\label{interiorlb}
\norm{u}_{L^2}  \gtrsim k^{-1/2}
\eeq
as $k \to\infty$ with $\nu$ fixed.  On the other hand, using the
asymptotic expansion of $J_\nu'$ yields
$$
\pa_r u =-\varphi(\theta) r^{1-d/2} k \sqrt{\frac{2}{\pi k r}} \big(\sin
  \omega+O(k^{-1})\big),
$$
hence at $r=1$ we have
$$
(\pa_r-\ri k)u\sim \varphi(\theta)
\sqrt{\frac{2k}{\pi}}\big(\cos\omega_0+\ri \sin \omega_0\big)
$$
with $\omega_0= k-\frac 12 \nu \pi-\frac 14 \pi.$  Thus, 
$$
\norm{(\pa_r-\ri k)u}_{L^2(S^{d-1})}\sim C k^{1/2}.
$$
Comparing to \eqref{interiorlb} yields the desired estimate.
\end{proof}

\begin{remark}[Extension to inhomogeneous problems]
The results of this section hold equally well, with identical proofs,
if we generalize the flat Laplacian to an inhomogeneous and/or
anisotropic operator with smooth coefficients such as
$$
\sum \partial_i a^{ij}(x) \partial_j,
$$
with $a^{ij}(x)$ strictly positive definite.
The only difference is that we then need to impose an auxiliary
geometric hypothesis, as Lemma~\ref{lemma:nondiffractive} no longer
applies. In this setting, motion along straight lines is replaced by
the Hamiltonian dynamical system
\begin{equation}\label{hamdyn}
\begin{aligned}
\dot x_i(t) &=\sum a^{ij}(x) \xi_j\\
\dot \xi_i(t) &=-\frac 12 \sum \frac{ \pa a^{kl}(x)}{\pa x_i} \xi_k\xi_l.
\end{aligned}
\end{equation}
It may easily be the case that trajectories of this system---which are
lifts to the cotangent bundle of geodesics with respect to the
Riemannian metric $a^{ij}(x)$---
fail to
reach the boundary at a nondiffractive point or indeed at all (e.g.\
$a^{ij}$ may be locally isometric in some
region to more than half of a round sphere).  Thus, we simply need to impose
geometric control by the boundary as a hypothesis: we insist that all
trajectories of \eqref{hamdyn} do reach the boundary at a
nondiffractive point.  The rest of our results then follow as in the
flat case.
\end{remark}

%\newpage

\section{Boundary integral equations for the exterior Dirichlet and Neumann problems}\label{sec:int}

In this section we derive both the integral equation \eqref{eq:CFIE} for the solution of the exterior Dirichlet problem and the analogous equation for the solution of the exterior Neumann problem. 
We then give a new proof of the decomposition \eqref{eq:key} (which is more intuitive than the proof in \cite{ChGrLaSp:12}), and we then prove an analogous decomposition for the integral equation for the Neumann problem. 
%prove Theorem \ref{thm:CFIE}, and then discuss the Neumann problem. Although the results of the rest of the paper are valid for dimension $d\geq 2$, here we only consider $d=2,3$, since these are the dimensions of interest for practical applications. 

We note that there are now many good texts discussing the theory of integral equations for the Helmholtz equation, for example \cite{Mc:00}, \cite{SaSc:11}, \cite{St:08}, \cite{HsWe:08}; we will use \cite{ChGrLaSp:12} as a default reference (since it, like us, is concerned with the high-frequency behaviour of these integral operators).

If $u$ is a solution of the homogeneous Helmholtz equation in $\Oe$
then an application of Green's formula yields
\beq\label{eq:Green}
u(x) = - \int_\Gamma \Phi_k(x,y) \dnpu(y) \, \rd s(y) + \int_\Gamma \pdiff{\Phi_k(x,y)}{n(y)} \gamma_+ u(y) \, \rd s(y), \quad x\in \Oe, 
\eeq
(see, e.g., \cite[Theorem 2.21]{ChGrLaSp:12}), where 
$\Phi_k(\bx,\by)$ is the fundamental solution of the Helmholtz equation given by 
\beq\label{eq:fund}
\Phi_k(\bx,\by)=\displaystyle\frac{\ri}{4}H_0^{(1)}\big(k|\bx-\by|\big), \,\,d=2,\quad\quad \Phi_k(\bx,\by) = \frac{\re^{\ri k |\bx-\by|}}{4\pi |\bx-\by|}, \,\,d=3.
\eeq
Taking the exterior Dirichlet and Neumann traces of \eqref{eq:Green} on $\Gamma$ and using the jump relations for the single- and double-layer potentials (see, e.g., \cite[Equation 2.41]{ChGrLaSp:12} we obtain the following two integral equations
\beq\label{eq:BIE1}
S_k \dnpu = \left( -\half I + D_k\right) \gamma_+ u 
\eeq
and 
\beq\label{eq:BIE2}
\left( \half I + D_k'\right) \dnpu = H_k \gamma_+ u, 
\eeq
where $S_k$, $D_k$ are the single- and double-layer operators, $D_k'$ is the adjoint double-layer operator, and $H_k$ is the hypersingular operator. These four integral operators are defined for $\phi\in\LtG$, $\psi \in \HoG$, and $x\in\Gamma$ by 
\begin{align}\label{eq:SD}
&S_k \psi(\bx) := \int_\Gamma \Phi_k(\bx,\by) \psi(\by)\,\rd s(\by), \qquad
D_k \phi(\bx) := \int_\Gamma \frac{\partial \Phi_k(\bx,\by)}{\partial n(\by)}  \phi(\by)\,\rd s(\by),\\
&D'_k \psi(\bx) := \int_\Gamma \frac{\partial \Phi_k(\bx,\by)}{\partial n(\bx)}  \psi(\by)\,\rd s(\by),\quad
H_k \phi(\bx) := \pdiff{}{n(x)} \int_\Gamma \pdiff{\Phi_k(\bx,\by)}{n(\by)} \phi(\by)\, \rd s(\by).
\end{align}
When $\Gamma$ is Lipschitz, the integrals defining $D_k$ and $D'_k$ 
must be understood as Cauchy principal value integrals and even when $\Gamma$ is smooth there are subtleties in defining $H_k\psi$ for $\psi\in\LtG$ which we ignore here (see, e.g., \cite[\S2.3]{ChGrLaSp:12}). 

\subsection{The Dirichlet problem}

In the case of the Dirichlet problem, the integral equations
\eqref{eq:BIE1} and \eqref{eq:BIE2} are both integral equations for
the unknown Neumann trace $\dnpu$. However \eqref{eq:BIE1} is not
uniquely solvable when $-k^2$ is a Dirichlet eigenvalue of the
Laplacian in $\Oi$, and \eqref{eq:BIE2} is not uniquely solvable
when $-k^2$ is a Neumann eigenvalue of the Laplacian in $\Oi$. (This is
because if $w$ solves the \emph{interior} Helmholtz equation, Green's
formula yields
$$
\left( \frac 12 I +D_k \right) \gamma_- w  = S_k \pa_n^- w;
$$
hence existence of nullspace of these operators is equivalent to
existence of Dirichlet/Neumann eigenvalues.)

The standard way to resolve this difficulty is to take a linear combination of the two equations, which yields the integral equation
\beq\label{eq:CFIE2}
\opA \dnpu = \opBM \gamma_+ u 
\eeq
where 
\beq\label{eq:CFIEdef}
\opA := \half I + D_k' - \ri \eta S_k
\eeq
and 
\beq\label{opBMdefinition}
\opBM := H_k + \ri \eta \left(\half I - D_k\right).
\eeq
If $\eta \in \Rea\setminus\{0\}$ 
then the integral operator $\opA$ is invertible (on appropriate Sobolev spaces) and so \eqref{eq:CFIE} can then be used to solve the exterior Dirichlet problem for all (real) $k$.
Furthermore one can then show that if $\eta\in \Rea\setminus\{0\}$ then $\opA$ is a bounded invertible operator from $H^s(\Gamma)$ to itself for $-1\leq s\leq 0$; \cite[Theorem 2.27]{ChGrLaSp:12}.

For the general exterior Dirichlet problem it is natural to pose Dirichlet data in $\HhG$ (since $\gamma_+ u \in \HhG$). The mapping properties of $H_k$ and $D_k$ (see \cite[Theorems 2.17, 2.18]{ChGrLaSp:12}) imply that $\opBM: H^{s+1}(\Gamma) \rightarrow H^s(\Gamma)$ for $-1\leq s\leq 0$, and thus $\opBM \gamma_+ u \in \HmhG$. This indicates that we should consider \eqref{eq:CFIE2} as an equation in $\HmhG$. 

Unfortunately evaluating the $\HmhG$ inner product numerically is expensive, and thus it is not practical to implement the Galerkin method on \eqref{eq:CFIE} as an equation in $\HmhG$ (for a short overview of proposed solutions to this problem, see \cite[\S2.11]{ChGrLaSp:12})
Fortunately, we can bypass this problem in the case of plane-wave or point-source scattering. Indeed, in this case $\gamma_+ u \in \HoG$ and $\dnpu\in\LtG$ \cite[Theorem 2.12]{ChGrLaSp:12}. Since $\opBM \gamma_+u$ and $\opA \dnpu$ are then in $\LtG$, we can consider \eqref{eq:CFIE2} as an equation in $\LtG$, which is a natural space for implementing the Galerkin method.

\subsection{The Neumann problem}\label{section:Neumann}

In the case of the Neumann problem we can view \eqref{eq:CFIE2} as an equation to be solved for $\gamma_+u$. Indeed, given $\dnpu\in\HmhG$, we have $\opA \dnpu\in \HmhG$ and $\opBM \gamma_+u\in \HmhG$. The equation \eqref{eq:CFIE2} can then be cast as the variational problem on $\HhG$: find $\phi\in\HhG$ such that % with sesquilinear form
\beqs
\langle \opBM \phi, \psi\rangle_\Gamma = \langle \opA \dnpu, \psi\rangle_\Gamma \quad\tfa \psi \in\HhG,
\eeqs
where recall that $\langle\cdot,\cdot\rangle_\Gamma$ is the duality pairing between $H^{-s}(\Gamma)$ and $H^s(\Gamma)$ for $0\leq s\leq 1$.

Although this gives a practically-realizable Galerkin method, the fact that $\opBM$ is a first-kind operator means that the condition number of the discretized system depends on the discretization and thus it is desirable to precondition the equation with an operator of opposite order before discretizing (see, e.g., \cite[\S13]{St:08} for a discussion of this technique in general).

For $\opBM$ this strategy amounts to multiplying \eqref{eq:BIE2} by an operator $R: H^{-1}(\Gamma) \rightarrow \LtG$ and then adding it to $-\ri \eta$ multiplied by \eqref{eq:BIE1}. This results in the equation

\beq\label{eq:CFIE3}
\opBMt \gamma_+ u = \opAt \dnpu
\eeq
where 
\beqs
\opBMt:= RH_k + \ri \eta \left( \half I - D_k\right)
\eeqs
and
\beqs
\opAt := R\left(\half I + D_k'\right) - \ri \eta S_k. 
\eeqs
The mapping properties of $R$ and the boundary integral operators $S_k, D_k, D'_k, H_k$ imply that both $\opBMt$ and $\opAt$ are bounded operators mapping $\LtG$ to itself, and thus, in the case when $\dnpu\in\LtG$, \eqref{eq:CFIE3} can be considered as an integral equation in $\LtG$. Of course, $R$ must satisfy some additional conditions to ensure that \eqref{eq:CFIE3} has a unique solution for all $k>0$. 

The most common choice is to take $R=S_0$, motivated by the Calderon identity
\beqs
S_0 H_0 = -\half I + D_0^2
\eeqs
(\cite[Equation 2.56]{ChGrLaSp:12}) and the fact that $S_0(H_k-H_0)$ is compact (since $H_k-H_0$ has a weakly singular kernel; see \cite[Equation 2.25]{ChGrLaSp:12}). 

The choice $R= S_{\ri k}$ was proposed in \cite{BrElTu:12}, and
further used and analyzed in, e.g., \cite{BoTu:13},
\cite{ViGrGi:14}. 
Other choices for $R$ include principal symbols of certain pseudodifferential operators \cite{BoTu:13}, and (for the indirect analogue of \eqref{eq:CFIE3}) approximations of the NtD map \cite[\S8]{AnDa:11}.
%Other choices for $R$ include various approximations
%of the NtD map \cite[\S8]{AnDa:11}, \cite{BoTu:13}
%(for the indirect analogue of \eqref{eq:CFIE3}).

\subsection{Decompositions of inverses of combined potential operators}\label{sec:6-3}

The decomposition \eqref{eq:key} of $\opAinv$ in terms of $\DtN$ and $\ItD$ is implicit in much of the work on $\opA$, but was first written down explicitly in \cite[Theorem 2.33]{ChGrLaSp:12}, along with the analogous decomposition for $\opBM^{-1}$ (as a special case of the decomposition of the inverse of the integral operator for the exterior impedance problem).

In Lemma \ref{lemma:inverses} below we provide an alternative, more intuitive, proof of these decompositions. We also give the analogous decomposition of the operator $\opBMt^{-1}$ in terms of $\NtD$ and $\ItDR$, where the operator $\ItDR :\LtGt$ maps $g\in \LtG$ to the Dirichlet trace of the solution of the BVP
\beqs
\Delta u +k^2 u =0 \quad \tin \Oi, \qquad R\dnmu - \ri \eta \gamma_-u = g \quad \ton \Gamma
\eeqs
(assuming appropriate conditions on $R$ are imposed so that this BVP has a unique solution for all $k>0$).

\begin{lemma}\label{lemma:inverses}
We have the following expressions for the inverses of
combined-potential operators:
\begin{align}
\opAinv &= I - (\DtN - \ri \eta ) \ItD,\\
(\opBM)^{-1} &= \NtD-(I-\ri\eta \NtD) \ItD,\\
(\opBMt)^{-1} &= \NtD R^{-1} - (I-\ri \eta \NtD R^{-1}) \ItDR.\label{eq:611}
\end{align}
\end{lemma}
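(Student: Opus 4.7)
The unifying observation I would verify first is that each of the three combined-potential operators is literally the interior impedance trace (ordinary or $R$-modified) of a specific layer potential of $\phi$. Using the standard jump relations $\gamma_{\pm}S_k\phi=S_k\phi$, $\partial_n^- S_k\phi=(\half I+D_k')\phi$, $\gamma_-D_k\phi=(-\half I+D_k)\phi$, and $\partial_n^{\pm}D_k\phi=H_k\phi$, one directly checks
\begin{align*}
(\partial_n^- - \ri\eta\gamma_-)(S_k\phi) &= \opA\phi,\\
(\partial_n^- - \ri\eta\gamma_-)(D_k\phi) &= \opBM\phi,\\
(R\partial_n^- - \ri\eta\gamma_-)(D_k\phi) &= \opBMt\phi.
\end{align*}
These three identities are really the whole point of the proof; everything else is bookkeeping. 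Once they are in hand the decompositions become essentially forced.

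For the first formula, I would fix $\phi$, set $\psi:=\opA\phi$, and consider $v:=S_k\phi|_{\Oi}$. The first identity above shows that $v$ solves the homogeneous Helmholtz equation in $\Oi$ with interior impedance data $\psi$, so by definition of $\ItD$ one has $S_k\phi = \gamma_- v = \ItD\psi$. Continuity of the single-layer across $\Gamma$ then means the exterior function $w:=S_k\phi|_{\Oe}$ satisfies the Helmholtz equation and the Sommerfeld radiation condition with Dirichlet trace $\ItD\psi$, so $\partial_n^+ w = \DtN\ItD\psi$. Combining this with the Neumann jump $\partial_n^+ S_k\phi-\partial_n^- S_k\phi=-\phi$ and with $\partial_n^- S_k\phi=\psi+\ri\eta\ItD\psi$ (which is the impedance identity rearranged) yields
\[
\DtN\ItD\psi = \psi + \ri\eta\ItD\psi - \phi,
\]
and solving for $\phi$ gives $\opAinv\psi=\psi-(\DtN-\ri\eta)\ItD\psi$.

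The two remaining decompositions follow the same template with $D_k\phi$ replacing $S_k\phi$, but with two structural differences. First, for $D_k\phi$ the relevant jump across $\Gamma$ is the Dirichlet jump $\gamma_+D_k\phi-\gamma_-D_k\phi=\phi$ rather than the Neumann one, so I would invoke $\NtD$ rather than $\DtN$ to pass from the exterior Neumann trace $\partial_n^+ D_k\phi = H_k\phi$ to $\gamma_+D_k\phi$. Second, for \eqref{eq:611} the impedance identity reads $RH_k\phi=\psi+\ri\eta\ItDR\psi$, so one applies $R^{-1}$ before feeding into $\NtD$, and uses $\ItDR$ in place of $\ItD$. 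The algebra is identical to the $\opA$ case.

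The one thing that will need a little care, and which I expect to be the main (though minor) obstacle, is checking that the interior and exterior functions $v$ and $w$ land in the correct function spaces so that the uniqueness statements underlying the definitions of $\ItD$, $\ItDR$, $\DtN$, $\NtD$ actually apply on the spaces on which $\opA$, $\opBM$, $\opBMt$ are being inverted, and that the jump identities above hold in the appropriate trace spaces rather than merely pointwise on smooth densities. Both points are routine given the mapping properties of $S_k$, $D_k$, $D_k'$, $H_k$ listed after \eqref{eq:SD} and the trace-space conventions set up in \S\ref{sec:2-1} and \S\ref{sec:int}.
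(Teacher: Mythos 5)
Your argument is correct, and all three algebraic derivations check out (including the signs of the jump relations). It is at heart the same proof as the paper's, but unwrapped: the paper packages the argument in terms of the Calder\'on projectors $\Pi_\pm$, observes that $(-\ri\eta,\, 1)\cdot\Pi_- = (-\opBM,\, \opA)$, and then extracts both the $\opAinv$ and the $\opBMinv$ formulas from this single identity by specializing the density pair $(\phi,\psi)$ to $(0,\psi)$ or $(\phi,0)$ and passing to exterior Cauchy data via $\Pi_+=I-\Pi_-$. Setting $\phi=0$ there produces exactly the interior Cauchy data of $S_k\psi$, and setting $\psi=0$ produces that of $-D_k\phi$, so your direct identification of $\opA\phi$, $\opBM\phi$, $\opBMt\phi$ as interior impedance traces of $S_k\phi$ and $D_k\phi$ is the same observation with the projector bookkeeping removed. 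Your version has the virtue of making that identification explicit and elementary, which is what gives the proof its ``intuitive'' flavor; the projector version has the virtue of deriving the $\opAinv$ and $\opBMinv$ formulas simultaneously without recalling the individual jump relations. The modification for $\opBMtinv$ (replace the row vector by $(-\ri\eta,\, R)$, or in your language apply $R$ to the Neumann trace and use $\ItDR$) is the same in both. Your closing caveat about trace spaces is the right one to raise, and the paper's projector argument relies implicitly on the same mapping properties you cite.
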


\begin{proof}[Proof of Lemma \ref{lemma:inverses}]
We recall (e.g.\ from Section~2.5 of \cite{ChGrLaSp:12}) the formula for
the interior and exterior \emph{Calder\'on projectors}, which project
onto pairs of Dirichlet and Neumann data for solutions to the
Helmholtz equation in $\Omega_-$ and $\Omega_+$ (with radiation
condition) respectively.  In terms of layer potentials, we may write
these operators as
$$
\Pi_\pm=\frac 12 I\pm M_k,\quad M_k \equiv \bpm D_k &
  -S_k\\ H_k & -D_k' \epm
$$
(Here we have departed from the notation of \cite{ChGrLaSp:12} for the
Calder\'on projectors---these authors use $P_\pm$---as the letter $P$
is somewhat overloaded.)

These definitions imply that
$$
\bpm -\ri\eta & 1 \epm \cdot \Pi_-= \bpm
  -\opBM & \opA \epm.
$$
Hence
\beq\label{eq:612}
\bpm -\ri\eta & 1 \epm \cdot \Pi_- \bpm \phi \\ \psi \epm = g
\Longleftrightarrow -\opBM\phi +\opA \psi = g.
\eeq

On the other hand, since $\Pi_-$ projects to Cauchy data for the
interior Helmholtz problem, we assuredly find that 
\begin{equation}\label{combinedequation}
\bpm -\ri\eta & 1 \epm \cdot \Pi_- \bpm \phi \\ \psi \epm= g
\end{equation}
means that 
$$
\Pi_- \bpm \phi \\ \psi \epm
$$
are Cauchy data for the interior impedance problem, hence we may
rewrite
$$
\Pi_- \bpm \phi \\ \psi \epm=\bpm \ItD(g)\\ \ItN(g) \epm.
$$
Since $\Pi_++\Pi_-=I,$ we now find that
$$
\Pi_+ \bpm \phi \\ \psi \epm=\bpm \phi-\ItD(g)\\ \psi-\ItN(g) \epm.
$$
Note that the RHS is now guaranteed to be Cauchy data for a solution
of the exterior Helmholtz equation (with radiation condition) and
hence we may write its two components in terms of one another via the
maps $\DtN$ and $\NtD.$

Now we split into the special cases of $\phi=0$ or $\psi=0.$  In the
former case we have
$$
\Pi_+ \bpm 0 \\ \psi \epm=\bpm -\ItD(g)\\ -\DtN(\ItD(g)) \epm
$$
(where we have written the second component in terms of the first using $\DtN$).
Thus
\begin{align*}
\psi&= \bpm -\ri\eta & 1 \epm \cdot \bpm 0 \\ \psi \epm\\ &= \bpm -\ri\eta
& 1 \epm
\cdot (\Pi_++\Pi_-) \bpm 0 \\ \psi \epm
\\ &=\bpm -\ri\eta & 1 \epm \cdot \bpm -\ItD(g)\\ -\DtN(\ItD(g)) \epm+g
\end{align*}
where we have used \eqref{combinedequation} to evaluate the $\Pi_-$
term.
Likewise, when $\psi=0$ we have
$$
\Pi_+ \bpm \phi \\ 0 \epm=\bpm \phi-\ItD(g)\\ \DtN(\phi -\ItD(g)) \epm.
$$
Thus
\begin{align*}
-\ri\eta \phi&= \bpm -\ri\eta & 1 \epm \cdot \bpm \phi \\ 0 \epm\\ &= \bpm
-\ri\eta & 1 \epm
\cdot (\Pi_++\Pi_-) \bpm \phi \\ 0 \epm
\\ &=\bpm -\ri\eta & 1 \epm \cdot \bpm \phi-\ItD(g)\\ \DtN(\phi-\ItD(g)) \epm+g
\end{align*}
In both cases, solving for $\psi$ (respectively $\phi$) and recalling \eqref{eq:612} gives the desired
expression in terms of $g$ (in the latter case, we use that
$\phi=\NtD \circ \DtN \phi$).

Finally, to obtain the formula for $\opBMtinv,$ we apply the same
argument as for $\opBMinv,$ but where we consider
$$
\bpm -\ri\eta & R \epm \cdot \Pi_-
$$
throughout, rather than $$\bpm -\ri\eta & 1 \epm \cdot \Pi_-.$$
\end{proof}

The estimate $\opBM^{-1}$ analogous to the estimate 
\eqref{eq:Ainv_bound_main} on $\opAinv$ is as follows.
\begin{lemma}\label{thm:CFIE-Neumann}
Let $\Oe\subset \Rea^d$, $d=2,3$, be a smooth, nontrapping domain and suppose that $\eta$ satisfies Assumption \ref{ass:eta}. 
Then, given $k_0>0$,
\beq\label{eq:Binv_bound_main}
\norm{\opBM^{-1}}_{\LtG\rightarrow H^1_k(\Gamma)} \lesssim \abs{k}^{1-\beta}
%\norm{\opBM^{-1}}_{\LtGt} \lesssim \abs{k}^{-\beta}
\eeq
for all $\abs{k}\geq k_0$, where $\beta$ is as in Theorem~\ref{thm:2}.
\end{lemma}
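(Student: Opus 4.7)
The plan is to mimic the short proof of Theorem~\ref{thm:CFIE} given in the introduction, but now using the decomposition of $\opBM^{-1}$ from Lemma~\ref{lemma:inverses}, namely
\beqs
\opBM^{-1} = \NtD - (I - \ri\eta\,\NtD)\,\ItD = \NtD - \ItD + \ri\eta\,\NtD\,\ItD,
\eeqs
and estimating each of the three resulting operators as a map from $\LtG$ to $H^1_k(\Gamma)$.

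The first term $\NtD$ is controlled directly by the NtD estimate \eqref{eq:NtD2} of Theorem~\ref{thm:2}, which gives $\|\NtD\|_{\LtG\to H^1_k(\Gamma)} \lesssim |k|^{1-\beta}$. The second term $\ItD$ is controlled by Corollary~\ref{cor:ItD}, which gives $\|\ItD\|_{\LtG\to H^1_k(\Gamma)} \lesssim 1$. For the third, composite term, the key observation is that Corollary~\ref{cor:ItD} actually implies two bounds simultaneously: not only $\|\ItD\|_{\LtG\to H^1_k(\Gamma)}\lesssim 1$, but also $\|\ItD\|_{\LtG\to\LtG}\lesssim |k|^{-1}$ (since the $H^1_k$ norm controls $|k|$ times the $L^2$ norm). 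Composing with $\NtD: \LtG \to H^1_k(\Gamma)$ therefore yields $\|\NtD\,\ItD\|_{\LtG\to H^1_k(\Gamma)} \lesssim |k|^{1-\beta}\cdot|k|^{-1} = |k|^{-\beta}$. Since Assumption~\ref{ass:eta} gives $|\eta|\lesssim |k|$ for $|k|\geq k_0$, the third term contributes at most $|k|\cdot|k|^{-\beta} = |k|^{1-\beta}$.

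Summing,
\beqs
\|\opBM^{-1}\|_{\LtG\to H^1_k(\Gamma)} \lesssim |k|^{1-\beta} + 1 + |k|^{1-\beta} \lesssim |k|^{1-\beta},
\eeqs
which is exactly \eqref{eq:Binv_bound_main}. There is no serious obstacle here beyond correctly bookkeeping the operator norms; the only small subtlety is recognizing that Corollary~\ref{cor:ItD} automatically gives the $|k|^{-1}$ gain on $\ItD$ into $\LtG$, which is what converts the seemingly bad $|\eta|\sim|k|$ prefactor into the desired $|k|^{1-\beta}$ bound rather than something worse.
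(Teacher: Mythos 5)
Your proof is correct, and it mirrors exactly the argument the paper uses to prove Theorem~\ref{thm:CFIE} for $\opAinv$ (whose proof is given in \S\ref{sec:1-3}); the paper explicitly omits the proof of Lemma~\ref{thm:CFIE-Neumann} because $\opBM$ is not used in practice, so the analogue you spell out is exactly the intended one. Your bookkeeping of the three terms in the decomposition $\opBM^{-1}=\NtD-\ItD+\ri\eta\,\NtD\,\ItD$ is sound: the first and second terms are controlled directly by \eqref{eq:NtD2} and Corollary~\ref{cor:ItD}, and the third term is tamed precisely because Corollary~\ref{cor:ItD} simultaneously gives $\N{\ItD}_{\LtG\to\LtG}\lesssim\abs{k}^{-1}$, cancelling the factor $\abs{\eta}\lesssim\abs{k}$ from Assumption~\ref{ass:eta} --- this is the same ``double use'' of Corollary~\ref{cor:ItD} that the paper employs in the $\opAinv$ argument. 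The only point worth flagging explicitly (and which you implicitly handle) is that in the composition $\NtD\,\ItD$ the Dirichlet trace $\ItD(g)\in H^1(\Gamma)\subset\LtG$ is fed as $L^2$ Neumann data to the exterior problem, which is legitimate since the formula in Lemma~\ref{lemma:inverses} is derived at the level of Cauchy-data pairs, and $\LtG$ Neumann data is exactly what Theorem~\ref{thm:2} takes as input.
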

Since this integral operator is
not used in practice, however (as explained in \S\ref{section:Neumann}), we
do not include the proof.
%More generally it appears that 
%\beqs
%\norm{\opBM^{-1}}_{\LtG\rightarrow H^1_k(\Gamma)} \lesssim \abs{k}^{1-\beta}
%\eeqs
%and then a
Note that an estimate from $\HmhG$ to $\HhG$ can be obtained from \eqref{eq:Binv_bound_main} by interpolation.

The decomposition of $\opBMt^{-1}$ given by \eqref{eq:611} below and the sharp bounds on $\NtD$ in Theorem \ref{thm:2} reduce the problem of bounding $\|\opBMtinv\|_{\LtGt}$ to that of bounding $\ItDR$ for the different choices of $R$, however we do not pursue this further here.

%Lemma~\ref{lemma:inverses} below shows that, as an operator from $\LtG$ to itself,
%\beq\label{eq:key3}
%\opBMtinv = \NtD R^{-1} + (\ri \eta \NtD R^{-1} - I) \ItDR,
%\eeq

\section{Concluding remarks: the conditioning of $\opA$}\label{sec:condition}

In \S\ref{sec:1-3} we stated that the present paper combined with the recent work of %Han, Tacy, 
Galkowski--Smith and Galkowski
almost completes the study of the high frequency behaviour of $\normA$ and $\normAinv$, and thus of the condition number
\beq\label{eq:cond_def_again}
\cond(\opA) := \normA_{\LtGt} \normAinv_{\LtGt}.
\eeq
We conclude this paper by justifying this remark in \S\ref{sec:7-1}, but then also questioning in \S\ref{sec:7-2} whether the condition number is an appropriate object to study in relation to $\opA$.

\subsection{Upper bounds on $\cond(\opA)$}\label{sec:7-1}

We begin by recalling the recent sharp bounds on $\|S_k\|_{\LtGt}$ and $\|D_k\|_{\LtGt}$ proved in \cite[Theorem 2]{GaSm:14}, \cite[Theorem A.1]{HaTa:14}. (Note that $\|D_k\|_{\LtGt}= \|D'_k\|_{\LtGt}$, and so these bounds are sufficient to bound $\normA_{\LtGt}$.)

\begin{theorem}\textbf{\emph{(\cite[Theorem 1.2]{GaSm:14}, \cite[Theorem A.1]{HaTa:14}, \cite[Theorem 4.4]{Ga:15a})}}\label{thm:Gal}
With $\Oi$ and $\Gamma$ defined in \S\ref{sec:1-1}, if $\Gamma$ is a finite union of compact embedded $C^\infty$ hypersurfaces then there exists $k_0$ such that, for $k\geq k_0$,
\beqs
\N{S_k}_{\LtGt} \lesssim k^{-1/2}\log k, \quad \N{D_k}_{\LtGt} \lesssim k^{1/4}\log k.
\eeqs
If $\Gamma$ is a finite union of compact subsets of $C^\infty$ hypersurfaces with strictly positive curvature, then 
\beqs
\N{S_k}_{\LtGt} \lesssim k^{-2/3}\log k, \quad \N{D_k}_{\LtGt} \lesssim k^{1/6}\log k.
\eeqs
Moreover, modulo the factor $\log k$, all of the estimates are sharp.
\end{theorem}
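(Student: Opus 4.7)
My plan is to recast the single-layer $S_k$ and double-layer $D_k$ as semiclassical objects tied to the restriction problem for outgoing solutions of the Helmholtz equation. The starting point is the observation that $S_k\phi(x)$ is the restriction to $\Gamma$ of the free outgoing resolvent $R_0(k)$ applied to $\phi\, \delta_\Gamma$, so by duality the bound on $\|S_k\|_{\LtGt}$ is equivalent, via a $TT^*$ argument, to a sharp $L^2(\Gamma)$ restriction bound for quasi-eigenfunctions at frequency $k$. An analogous story holds for $D_k$, where one uses $\partial_{n(y)} R_0(k)$ on one side.

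Next, I would split the Schwartz kernel of $S_k$ dyadically into a near-diagonal piece $|x-y|\lesssim k^{-1}$ and an oscillatory off-diagonal piece. The near-diagonal piece is controlled by $S_0$ and gives an $O(1)$ contribution by Calder\'on-Zygmund theory, up to a logarithmic loss coming from the dyadic summation. For the oscillatory piece, the kernel $\Phi_k(x,y)$ is, modulo smooth factors, a semiclassical oscillatory integral with phase $k|x-y|$; restricted to $\Gamma\times\Gamma$ this is a semiclassical Fourier integral operator whose canonical relation is the billiard flow of $\Gamma$ in $\reals^d$. The $L^2$ bound then follows from a $TT^*$ composition and the sharp restriction estimate of Tataru/Burq-G\'erard-Tzvetkov, which on any smooth hypersurface gives a loss of $k^{1/4}$ and after squaring and taking square roots produces $\|S_k\|\lesssim k^{-1/2}\log k$. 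Under strict positivity of the curvature, the Hassell-Tao improvement of the restriction bound to $k^{1/6}$ upgrades this to $\|S_k\|\lesssim k^{-2/3}\log k$.

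The analysis for $D_k$ is parallel but one now pays one factor of $k$ for differentiating the oscillatory factor $e^{\ri k|x-y|}$. Crucially, however, the normal derivative $\pa_{n(y)}|x-y|$ carries a factor of $\cos\theta$ (with $\theta$ the angle between the chord $x-y$ and the normal at $y$) that vanishes on the glancing set; exploiting this gain of one half-derivative of regularity near glancing together with the same restriction estimates gives $\|D_k\|\lesssim k^{1/4}\log k$ in general and $\|D_k\|\lesssim k^{1/6}\log k$ in the strictly curved case. Sharpness (modulo the logarithm) is shown by taking $\Gamma$ to be the unit sphere, diagonalizing $S_k$ and $D_k$ with spherical harmonics and Bessel/Hankel multipliers, and extracting the growth from the transition-region asymptotics of $J_\nu(k)$ at $\nu\sim k$, which is precisely the microlocal glancing regime.

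The main obstacle is the sharp $k^{1/6}$ restriction bound under strict positivity of the second fundamental form. Away from glancing, the restriction of the resolvent to $\Gamma$ is a semiclassical FIO whose canonical relation projects as a local graph, and standard FIO $L^2$ bounds apply; near glancing, however, the projection develops a fold singularity, and one must carry out an Airy-function parametrix analysis (essentially, a boundary-layer normal form of Melrose-Taylor type) to see that the fold structure enabled by strict convexity produces a gain of exactly $k^{1/6}$ over the trivial bound. This is the hard microlocal content of Hassell-Tao, and everything else in the proof is essentially formal given it.
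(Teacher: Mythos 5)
The statement you are trying to prove is not proved in this paper: it appears as Theorem~\ref{thm:Gal} purely as a citation of Galkowski--Smith \cite{GaSm:14}, Han--Tacy \cite{HaTa:14}, and Galkowski \cite{Ga:15a}, so there is no internal proof to compare your proposal against. Your outline does capture the right starting point---recognizing $S_k=\gamma_\Gamma R_0(k)\gamma_\Gamma^*$ and relating its norm via $TT^*$ to restriction bounds for quasimodes, then splitting near/far diagonal---and this is indeed the strategy of those papers. (Incidentally, the relevant source is Han--Tacy, not Hassell--Tao; there is no ``Hassell--Tao improvement'' being used here.)

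There are, however, concrete gaps. First, your arithmetic for $D_k$ does not produce the stated exponent. You pay a factor $k$ for differentiating the phase and then claim a ``gain of one half-derivative'' from the vanishing of $\cos\theta$ at glancing; that heuristic gives $k\cdot k^{-1/2}\cdot\N{S_k}\sim k^{1/2}\cdot k^{-1/2}=1$, not $k^{1/4}$. More fundamentally, the worst case for $\N{D_k}_{\LtGt}$ is \emph{not} governed by glancing on a single surface: the hypothesis ``finite union of hypersurfaces'' is there precisely because the sharp $k^{1/4}$ example is two nearly parallel pieces of $\Gamma$ interacting transversally, where $\cos\theta=O(1)$ and there is no glancing cancellation at all. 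Your argument never accounts for this cross-term, and any proof that treats $\Gamma$ as a single connected surface will produce a bound that is too small for $D_k$ and hence cannot be reconciled with the claimed sharpness. Second, your sharpness discussion (spheres, Bessel asymptotics at $\nu\sim k$) only addresses the positively curved case; the general-case lower bounds $k^{-1/2}$ for $S_k$ and $k^{1/4}$ for $D_k$ require flat model pieces, and for $D_k$, two such pieces. Finally, the ``Airy parametrix'' step you identify as the main obstacle is indeed where the $k^{1/6}$ gain comes from, but you should say explicitly that for $S_k$ this is a restriction bound for the free half-wave kernel to a convex hypersurface---it is a stationary-phase-with-fold computation, not a boundary-value-problem normal form in the Melrose--Taylor sense, since there is no boundary condition being imposed.
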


Note that in 2-d the sharp bound $\|S_k\|_{\LtGt}\lesssim k^{-1/2}$ was proved in \cite[Theorem 3.3]{ChGrLaLi:09}.

Combining these bounds with the bounds on $\normAinv$ \eqref{eq:Ainv_CWM} and \eqref{eq:Ainv_bound_main}, as well as bounds when $\Gamma$ is the circle or sphere obtained by \cite{Gi:97}, \cite{DoGrSm:07}, \cite{BaSa:07} (see the review in \cite[\S5.4]{ChGrLaSp:12}) we obtain the following theorem.

\begin{theorem}[Upper bounds on the condition number]\label{thm:condition}

\

\noi (a) Let $\Oi$ be star-shaped with respect to a ball, with $\Gamma$ piecewise smooth.
When $d=2$, if 
\beqs
k^{3/4}\log k \lesssim |\eta|\lesssim k
\eeqs
then
\beq\label{eq:71}
\cond (\opA) \lesssim k^{1/2}.
\eeq
When $d=3$, if 
\beqs
k^{3/4} \lesssim |\eta|\lesssim k
\eeqs
then
\beq\label{eq:71a}
\cond (\opA) \lesssim k^{1/2} \log k.
\eeq
(b) If $\Oi$ is nontrapping and $\eta$ satisfies Assumption \ref{ass:eta} (which includes the case $|\eta|\sim k$), then \eqref{eq:71} holds when $d=2$ and \eqref{eq:71a} holds when $d=3$.
%\beqs
%\cond (\opA) \lesssim k^{1/2}.
%\eeqs

\noi (c) If $\Oi$ is star-shaped with respect to a ball, $\Gamma$ is the finite union of smooth surfaces with strictly positive curvature, and 
\beqs
k^{5/6} \lesssim |\eta|\lesssim k
\eeqs
then
\beq\label{eq:72}
\cond (\opA) \lesssim k^{1/3}\log k.
\eeq
In particular, if $\Oi$ is a 2- or 3-d ball (i.e., $\Gamma$ is the circle or sphere) then $\cond (\opA) \lesssim k^{1/3}$ when 
\beqs
%\cond(\opA) \lesssim k^{1/3} \quad \text{ when } \quad 
k^{2/3}\lesssim |\eta|\lesssim k.
\eeqs 
%and, in particular, $\cond(\opA)\sim k^{1/3}$ when $|\eta|\sim k$.
\end{theorem}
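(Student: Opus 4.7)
The proof will be a direct combination of three inputs: (i) the triangle inequality applied to the definition \eqref{eq:CFIEdef} of $\opA$, (ii) the sharp operator norm bounds on $S_k$ and $D_k$ from Theorem \ref{thm:Gal} (together with $\|D_k'\|_{\LtGt}=\|D_k\|_{\LtGt}$ by duality), and (iii) the appropriate bound on $\|\opAinv\|_{\LtGt}$ — namely \eqref{eq:Ainv_CWM} for the star-shaped cases (a) and (c), and \eqref{eq:Ainv_bound_main} for the nontrapping case (b). The plan is then to carry out a short case analysis, checking in each regime of $|\eta|$ which term in the product dominates.

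The first step is to write
\beqs
\|\opA\|_{\LtGt} \leq \tfrac{1}{2} + \|D_k\|_{\LtGt} + |\eta|\|S_k\|_{\LtGt},
\eeqs
and insert the bounds from Theorem \ref{thm:Gal}. For part (a) this gives, in the piecewise smooth case, $\|\opA\|_{\LtGt} \lesssim 1 + k^{1/4}\log k + |\eta|k^{-1/2}\log k$ (omitting the $\log k$ on $S_k$ when $d=2$, by \cite[Theorem 3.3]{ChGrLaLi:09}). The assumed lower bound $|\eta|\gtrsim k^{3/4}\log k$ (or $k^{3/4}$ in $2$-d) ensures that $|\eta|k^{-1/2}\log k$ absorbs the $k^{1/4}\log k$ term, so $\|\opA\|_{\LtGt} \lesssim |\eta|k^{-1/2}\log k$ (resp.\ $|\eta|k^{-1/2}$ in $2$-d). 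The upper bound $|\eta|\lesssim k$ together with \eqref{eq:Ainv_CWM} gives $\|\opAinv\|_{\LtGt} \lesssim 1 + k/|\eta|$, and multiplying out yields $\cond(\opA) \lesssim |\eta|k^{-1/2}\log k + k^{1/2}\log k \lesssim k^{1/2}\log k$ in $3$-d and $\lesssim k^{1/2}$ in $2$-d, as claimed.

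Part (b) is a slight variant: the same bound on $\|\opA\|_{\LtGt}$ applies with $|\eta|\sim k$ (which is guaranteed by Assumption \ref{ass:eta} together with $|k|\geq k_0$), and now \eqref{eq:Ainv_bound_main} replaces \eqref{eq:Ainv_CWM} and gives $\|\opAinv\|_{\LtGt}\lesssim 1$, yielding the same condition number bound directly. Part (c) is completely analogous but uses the strictly curved estimates $\|S_k\|_{\LtGt}\lesssim k^{-2/3}\log k$ and $\|D_k\|_{\LtGt}\lesssim k^{1/6}\log k$ from Theorem \ref{thm:Gal}; with $k^{5/6}\lesssim|\eta|\lesssim k$, the term $|\eta|k^{-2/3}\log k$ dominates, and the product with $1+k/|\eta|$ gives $|\eta|k^{-2/3}\log k + k^{1/3}\log k \lesssim k^{1/3}\log k$. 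The final claim for balls uses the same argument but invokes the sharper, log-free bounds on both $\|\opA\|_{\LtGt}$ and $\|\opAinv\|_{\LtGt}$ available from \cite{Gi:97,DoGrSm:07,BaSa:07}, as compiled in \cite[\S5.4]{ChGrLaSp:12}: there $\|S_k\|,\|D_k\|$ obey the strictly curved estimates without the logarithm, and $\|\opAinv\|_{\LtGt}\lesssim 1$ for $|\eta|\sim k$.

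There is no real obstacle here: once the ingredients are collected, the proof is essentially bookkeeping. The only care needed is the case analysis showing that in the stated range $|\eta|\in[k^{1-\beta},k]$ (with $\beta$ depending on the curvature hypothesis), the three terms $1$, $\|D_k\|$ and $|\eta|\|S_k\|$ in the bound on $\|\opA\|_{\LtGt}$ are balanced with $\|\opAinv\|\lesssim \max(1,k/|\eta|)$, so that the combined bound is \emph{independent} of the precise value of $|\eta|$ within that range. The lower bound $|\eta|\gtrsim k^{1-\beta}$ is exactly what makes $|\eta|\|S_k\|$ dominate $\|D_k\|$, and the upper bound $|\eta|\lesssim k$ is exactly what keeps $|\eta|\|S_k\|$ from exceeding $k^{1/2}\log k$ (resp.\ $k^{1/3}\log k$).
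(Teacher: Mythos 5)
Your approach is exactly the one the paper takes: the theorem is stated immediately after the sentence "Combining these bounds with the bounds on $\normAinv$ \eqref{eq:Ainv_CWM} and \eqref{eq:Ainv_bound_main}, as well as bounds when $\Gamma$ is the circle or sphere obtained by \cite{Gi:97}, \cite{DoGrSm:07}, \cite{BaSa:07}\ldots we obtain the following theorem," and your triangle-inequality-plus-case-analysis is precisely the bookkeeping that this gestures at. So the method matches; there is no alternative route here.

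However, you have transposed the hypotheses for $d=2$ and $d=3$ in part (a). The theorem (correctly) assumes $|\eta|\gtrsim k^{3/4}\log k$ when $d=2$ and $|\eta|\gtrsim k^{3/4}$ when $d=3$, whereas your parenthetical "(or $k^{3/4}$ in $2$-d)" reverses this. The reason the extra $\log k$ is needed in $2$-d and not in $3$-d is that in $2$-d the $S_k$ bound $\norm{S_k}\lesssim k^{-1/2}$ carries no logarithm (from \cite[Theorem 3.3]{ChGrLaLi:09}) while the $D_k$ bound still does, so absorbing $k^{1/4}\log k$ into $|\eta|k^{-1/2}$ requires $|\eta|\gtrsim k^{3/4}\log k$. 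In $3$-d both the $S_k$ and $D_k$ bounds carry a $\log k$, which cancels, leaving $|\eta|\gtrsim k^{3/4}$. Under your (swapped) hypothesis in $2$-d, the claimed absorption $\norm{\opA}\lesssim |\eta|k^{-1/2}$ would in fact fail by a factor of $\log k$. This does not affect your final displayed bounds, which are correct, but the intermediate reasoning should be fixed. A second, smaller point: for the ball you justify the range $k^{2/3}\lesssim|\eta|\lesssim k$ by citing $\normAinv\lesssim 1$ only "for $|\eta|\sim k$"; you need the sharper ball-specific bound on $\normAinv$ to cover the full stated range (this is exactly what the references \cite{Gi:97}, \cite{DoGrSm:07}, \cite{BaSa:07} compiled in \cite[\S5.4]{ChGrLaSp:12} supply, so it suffices to invoke them more explicitly).
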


Earlier we stated that this theorem ``almost completes" the study of $\cond (\opA)$. One thing that is missing is a lower bound on $\cond(\opA)$ that shows the choice $|\eta|\sim k$ is optimal.
Indeed, in 2-d, if $\Gamma$ contains a straight line segment, then by \cite[Theorem 4.2]{ChGrLaLi:09}
\beqs
\N{\opA}_{\LtGt} \gtrsim \frac{|\eta|}{k^{1/2}} + \cO \left( \frac{|\eta|}{k}\right) + 1 
\eeqs
as $k\tendi$, uniformly in $|\eta|$. The only existing lower bound on $\normAinv$ is $\normAinv \geq 2$, which holds if a part of $\Gamma$ is $C^1$ \cite[Lemma 4.1]{ChGrLaLi:09}, and with this alone we cannot rule out the possibility that $\cond(\opA) \ll k^{1/2}$ for a choice of $|\eta|\ll k$ but $\gtrsim k^{3/4}\log k$ (although we do not expect this to be the case).

%We note that examples of trapping domains where $\normAinv$ blows up exponentially through a sequence of $k$s were given in \cite[Theorem 2.8]{BeChGrLaLi:10} (see also \cite[\S5.6.2]{ChGrLaSp:12})

\subsection{Should we really be interested in the condition number?}\label{sec:7-2}

To be concrete, we consider solving numerically the integral equation \eqref{eq:CFIE} (as an equation in $\LtG$) via the Galerkin method, i.e.~given a sequence of finite-dimensional nested subspaces $V_N\subset \LtG$, we seek $v_N\in V_N$ such that
\beq\label{eq:721}
( \opA v_N, w_N)_{\LtG} = (f_{k,\eta}, w_N)_{\LtG} \quad \tfa w_N\in V_N.
\eeq
%There exist other methods of solving the integral equation \eqref{eq:CFIE}, such as the Nystr\"om or collocation methods, but the theory of these methods is less developed than for the Galerkin method
We restrict attention to the case when $V_N$ consists of piecewise polynomials (and so we do not consider, e.g., subspaces involving oscillatory basis functions; see, e.g., \cite{ChGrLaSp:12} and the references therein), and furthermore we only consider the $h$-boundary element method (BEM) (i.e., the piecewise polynomials have fixed degree but decreasing mesh width $h$). 

Given a basis of $V_N$, equation \eqref{eq:721} becomes a system of linear equations; for simplicity we do not consider preconditioning this system.

For the high-frequency numerical analysis of this situation, there are now, roughly speaking, two tasks:
\ben
\item We expect that the subspace dimension $N$ ($\sim h^{-(d-1)}$) must grow with $k$ in order to maintain accuracy, and we would like $k$- and $\eta$-explicit bounds on the required growth.
\item One usually solves the linear system with an iterative solver such as the generalized minimal residual method (GMRES); we expect the number of iterations required to achieve a prescribed accuracy to increase with $k$, and we would like $k$- and $\eta$-explicit bounds on this growth.
\een

\paragraph{Regarding 1:}
The analysis in \cite{GrLoMeSp:15} shows that there exists a $C>0$ such that if
\beqs
h\left( \N{D_k'}_{\LtG\rightarrow \HoG} + |\eta|\N{S_k}_{\LtG\rightarrow \HoG}\right) \normAinv_{\LtGt} \leq C
\eeqs
then the sequence of Galerkin solutions $v_N$ is quasioptimal (with the constant of quasioptimality independent of $k$), i.e.,
\beqs
\N{\dnpu- v_N}_{\LtG} \lesssim \min_{w_N\in V_N}\N{\dnpu- w_N}_{\LtG};
\eeqs
see \cite[Corollary 4.1]{GrLoMeSp:15}. Therefore, minimizing
\beq\label{eq:722}
\left(\N{D_k'}_{\LtG\rightarrow \HoG} + |\eta|\N{S_k}_{\LtG\rightarrow \HoG}\right)\normAinv_{\LtGt} 
\eeq
gives the least restrictive condition on $h$.

This is not quite the same as minimizing the condition number, but if we believe that the $L^2 \rightarrow H^1$-norms of $D'_k$ and $S_k$ are proportional to the $L^2\rightarrow L^2$-norms (with the same constant of proportionality), as they are in the case of the circle and sphere at least (with ``constant" of proportionality $k$), then minimizing \eqref{eq:722} is equivalent to minimizing the condition number.\footnote{The methods used to prove the bounds in Theorem \ref{thm:Gal} also appear to be able to prove the corresponding $L^2\rightarrow H^1$ bounds with an extra factor of $k$ on the right-hand sides \cite{Ga:15}; thus the proportionality discussed above would hold.}

Two remarks: 
\bit
\item In \cite{GrLoMeSp:15} bounds on the $L^2 \rightarrow H^1$-norms are obtained and it is shown that, if $|\eta|\sim k$ and $\Oi$ is both $C^2$ and star-shaped with respect to a ball, then the quantity in \eqref{eq:722} is bounded by $k^{3/2}$ in 2-d, yielding the condition for quasioptimality $hk^{3/2}\lesssim 1$. In the case of the circle/sphere, better bounds on the norms can be used to obtain the condition for quasioptimality $hk^{4/3}\lesssim 1$. In practice, one sees that the $h$-BEM is quasi-optimal when $hk\lesssim 1$ (i.e., it does not suffer from the pollution effect), see, e.g., \cite[\S5]{GrLoMeSp:15}, but this observation has yet to be established rigorously.
\item Here we have only talked about the $h$-BEM; the $hp$-BEM (where the polynomial degree, $p$, is variable) is less sensitive to the value of $\eta$ and the norms of $\opA$ and $\opAinv$; see \cite{LoMe:11}, \cite{Me:12} for more details.
\eit

\paragraph{Regarding 2:}

In the discussion above we noted that, in practice, $hk\lesssim 1$ is sufficient to ensure $k$-independent quasioptimality of the Galerkin method. Since $N\sim h^{-(d-1)}$, this condition implies that, as $k$ increases, the size of the linear system must grow like $k^{(d-1)}$ to maintain accuracy.
Iterative methods, such as GMRES, are then the methods of choice for solving such large linear systems.

For Hermitian matrices there are well-known bounds on the number of iterations of the conjugate gradient method in terms of the condition number of the matrix \cite[Chapter 3]{Gr:97}, and for normal matrices there are well-known bounds on the number of GMRES iterations in terms of the location of the eigenvalues (which can be rewritten in terms of the condition number) \cite[Theorem 5]{SaSc:86}, \cite[Corollary 6.33]{Sa:03} (how satisfactory these bounds are is another question, but they exist). In contrast, for non-normal matrices it is not at all clear that the condition number tells us anything about the behaviour of GMRES (at least, there do not exist any bounds on the number of iterations in terms of the condition number of non-normal matrices).

As a partial illustration of this in the context of Helmholtz integral
equations, the recent work of Marburg \cite{Ma:14}, \cite{Ma:15a}
has emphasized
that, at least for certain collocation discretizations of the integral
equation \eqref{eq:CFIE2}, used as an integral equation for the Neumann
problem, the sign of $\eta$ affects the number of GMRES iterations (with
$\eta =k$ leading to much smaller iteration counts that $\eta = -k$). An
analogous effect occurs for similar collocation discretizations of the
integral equation \eqref{eq:CFIE2} used as an equation to solve the
Dirichlet problem, with the choice of $\eta=k$ much better than $\eta=-k$
\cite{Ma:15}. In contrast, the condition number estimates in Theorem
\ref{thm:condition} are independent of the sign of $\eta$, suggesting that
the condition number is not the right tool to investigate the behaviour of
GMRES.

A concept that does give bounds on the number of GMRES iterations for non-normal matrices is \emph{coercivity}.
%Currently, the only tool for giving bound on the number of GMRES iterations for non-normal matrices is \emph{coercivity}. 
On the operator level (for $\opA$ on $\LtG$), coercivity is the statement that there exists an $\alpha_{k,\eta}>0$ such that
\beqs
|\ang{\opA \phi,\phi}_{\LtG}|\geq \alpha_{k,\eta} \N{\phi}^2_{\LtG} \quad\tfa \phi \in\LtG,
\eeqs
and the matrix of the Galerkin method \eqref{eq:721} then inherits an analogous property (see, e.g., \cite[Equation 1.20]{SpKaSm:15}). If $\opA$ is coercive, then the so-called Elman estimate for GMRES \cite{El:82}, \cite[Theorem 3.3]{EiElSc:83}, \cite[\S1.3.2]{OlTy:14}
can be used to prove a bound on the number of GMRES iterations required to achieve a prescribed accuracy, with the bound given in terms of $\alpha_{k,\eta}$ and $\N{\opA}_{\LtGt}$; see \cite[Equation 1.21]{SpKaSm:15}.

It is not clear whether bounds on the number of GMRES iterations obtained via this method are sharp, and so far $\opA$ has only been proved to be coercive when $\eta \gtrsim k$ and $\Oi$ is strictly convex (and under additional smoothness assumptions on $\Gamma$), so we do not yet know enough to make a provably-optimal choice of $\eta$ via this approach. However, we do know that the sign of $\eta$ \emph{does} matter for coercivity. Indeed, when $\Oi$ is a ball, $\opA$ is coercive when $\eta=k$ \cite{DoGrSm:07}, but not when $\eta=-k$ \cite[\S1.2]{SpKaSm:15}. The dependence of coercivity on the sign of $\eta$ is consistent, therefore, with the results of Marburg that indicate that the number of GMRES iterations for $\opA$ depends on the sign of $\eta$.

%It was conjectured in \cite{BeSp:10} (on the basis of numerical experiments) that $\opA$ is coercive whenever $\Oe$ is nontrapping, but this 

\section{Acknowledgements} 

The authors thank Alex Barnett (Dartmouth and Simons Foundation), Charles Epstein (Pennsylvania), Jeffrey Galkowski (Stanford), David Hewett
(Oxford), Steffen Marburg (Universit\"at der Bundeswehr M\"unchen)
Andrea Moiola (Reading), Andr\'as Vasy (Stanford), and Leonardo Zepeda--N\'u\~{n}ez (University of California at Irvine) for helpful conversations.  
The authors also thank the referees for their constructive comments.

The first author gratefully acknowledges the support
of NSF postdoctoral fellowship DMS-1103436.  The second author
gratefully acknowledges the support of EPSRC Grant EP/1025995/1. The
third author gratefully acknowledges the support of NSF Grant
DMS-1265568.

% \bibliographystyle{plain}
% \bibliography{biblio_acta}

\end{document}